\newtheorem{thm}{Theorem}[section]
\newtheorem{lem}[thm]{Lemma}
\newtheorem{prop}[thm]{Proposition}
\newtheorem{claim}[thm]{Claim}
\theoremstyle{remark}
\newtheorem{rem}[thm]{Remark}
\theoremstyle{definition}
\newcommand{\norm}[1]{\left\Vert#1\right\Vert}
\newcommand{\abs}[1]{\left\vert#1\right\vert}
\newcommand{\set}[1]{\left\{#1\right\}}
\newcommand{\R}{\mathbb{R}}
\newcommand{\C}{\mathbb{C}}
\newcommand{\N}{\mathbb{N}}
\newcommand{\Z}{\mathbb{Z}}
\renewcommand{\dim}{\mathbf{dim}}
\newcommand{\hdim}{\dim_\mathrm{H}}
\newcommand{\bdim}{\dim_\mathrm{B}}
\newcommand{\loc}{\mathrm{loc}}
\newcommand{\lhdim}{\hdim^\loc}
\newcommand{\dist}{\mathbf{dist}}
\newcommand{\eqdef}{\overset{\mathrm{def}}=}
\newcommand{\p}{{\mathfrak p}}
\newcommand{\q}{{\mathfrak q}}
\newcommand{\T}{{\mathbb T}}
\newcommand{\Hd}{{\mathrm H}}
\newcommand{\Lyap}{{\mathrm{Lyap}}}
\newcolumntype{L}{>{\arraybackslash}m{6cm}}
\begin{document}

\title[Mixed spectrum]{Mixed spectral regimes for square Fibonacci Hamiltonians}

\author[J. Fillman]{Jake Fillman}
\address{Mathematics, Rice University, 1600 Main St. MS-136, Houston, TX 77005}
\email{jdf3@rice.edu}

\author[Y. Takahashi]{Yuki Takahashi}
\address{Mathematics, University of California, Irvine, CA 92697}
\email{takahasy@uci.edu}

\author[W. Yessen]{William Yessen}
\address{Mathematics, Rice University, 1600 Main St. MS-136, Houston, TX 77005}
\email{yessen@rice.edu}

\thanks{J.\ F.\ was supported in part by NSF grants DMS--1067988 and DMS--1361625.}

\thanks{Y.\ T.\ was supported by the NSF grant DMS--1301515 (PI: A.\ Gorodetski).}

\thanks{\indent W.\ Y.\ was supported by the NSF grant DMS--1304287.}

\subjclass[2010]{47B36, 82B44, 28A80, 81Q35.}

\date{\today}

\begin{abstract}
For the square tridiagonal Fibonacci Hamiltonian, we prove existence of an open set of parameters which yield mixed interval-Cantor spectra (i.e.\ spectra containing an interval as well as a Cantor set), as well as mixed density of states measure (i.e.\ one whose absolutely continuous and singular continuous components are both nonzero). Using the methods developed in this paper, we also show existence of parameter regimes for the square continuum Fibonacci Schr\"odinger operator yielding mixed interval-Cantor spectra. These examples provide the first explicit examples of an interesting phenomenon that has not hitherto been observed in aperiodic Hamiltonians. Moreover, while we focus only on the Fibonacci model, our techniques are equally applicable to models based on any two-letter primitive invertible substitution.

\end{abstract}

\maketitle

% \setcounter{tocdepth}{1}
% \tableofcontents

\section{Introduction}

The purpose of this work is to investigate the interesting phenomenon of mixed interval-Cantor spectra occuring in some natural higher-dimensional versions of one-dimensional substitution models. More precisely, we study the square of the extensively studied Fibonacci Hamiltonian and its generalizations (compare \cite{Damanik2014e,Damanik2013X,Mei201x,DFG2014}). The reason for this choice is three-fold: (1) the one-dimensional versions have been extensively studied as prototypical examples of one-dimensional quasicrystals and the square models present a natural next step towards honest models of higher-dimensional quasicrystals; (2) the square models have also been considered in a physical setting \cite{Mandel2008b,Mandel2008,Mandel2006,Ilan2004}, and while numerical studies have appeared, analytical results are scarce (to the best of our knowledge, the only comprehensive work in this direction to date is \cite{Damanik2013X}); (3) while the more commonly accepted models of two-dimensional quasicrystals (such as those based on the Penrose tiling) are currently out of reach, the square models are amenable to a rigorous analysis using tools which are presently available. Our techniques are based on spectral theory, smooth dynamical systems, and geometric measure theory. Our techniques are equally applicable to models based on any two-letter primitive invertible substitution (compare \cite{Mei201x}).

\subsection{Models and main results}

Define the parameter space $\mathcal R$ by
\begin{align}\label{eq:params}
\mathcal{R} = \set{(\mathfrak{p},\mathfrak{q})\in\R^2: \mathfrak{p}\neq 0}.
\end{align}
Given $\theta \in \T \eqdef \R / \Z$, we define a Sturmian sequence $\set{\omega_n}$ via
\begin{align*}
\omega_n
\eqdef
\chi_{[1-\alpha, 1)}(n\alpha + \theta \mod 1),
\quad
n \in \Z,
\end{align*}
where $\alpha = \frac{\sqrt{5}-1}{2}$, the inverse of the golden mean. Given $\lambda = (\p,\q) \in \mathcal R$, we define the coefficients $\set{p_n}$ and $\set{q_n}$ by
\begin{equation} \label{eq:pqdef}
p_n
\eqdef
(\p-1)\omega_n + 1
=
\left\{
\begin{matrix}
1 & \text{ if } & \omega_n = 0\\
\mathfrak{p} & \text{ if } & \omega_n = 1\\
\end{matrix}\right.\hspace{2mm}
\text{ and }
\hspace{2mm}
q_n
\eqdef
\mathfrak{q}\omega_n,
\end{equation}
and then the operator $H_\lambda$ is defined on $\ell^2(\Z)$ via
\begin{align}\label{eq:haml}
(H_\lambda\phi)_n
=
p_{n+1}\phi_{n+1}+p_n\phi_{n-1}+q_n\phi_n.
\end{align}
Then the so-called \emph{square Hamiltonian}, which acts on $\ell^2(\Z^2)$, is given by
\begin{align}\label{eq:square-haml}
\begin{split}
\left( H^2_{(\lambda_1, \lambda_2)} \phi \right)_{n, m}=
(H_{\lambda_1}\phi(\cdot, m))_n + (H_{\lambda_2}\phi(n, \cdot))_m,
\end{split}
\end{align}
where $\phi(\cdot, m)$ and $\phi(n, \cdot)$ denote elements of $\ell^2(\Z)$ defined by $n\mapsto \phi(n, m)$ and $m\mapsto \phi(n, m)$, respectively. Equivalently, $\ell^2(\Z^2)$ is canonically isomorphic to the tensor product $\ell^2(\Z) \otimes \ell^2(\Z)$; under the canonical identification, $H_{(\lambda_1,\lambda_2)} \cong H_{\lambda_1} \otimes I_2 + I_1 \otimes H_{\lambda_2}$, where $I_j$ denotes the identity operator acting on the $j$th factor of the tensor product.

It is known that the spectrum of the operator $H_\lambda$ is independent of the choice of $\theta \in \T$, hence its suppression in the notation. The spectrum, the spectral measures, and the density of states measure of this operator have been studied in the context of electronic transport properties of quasicrystals, as well as their magnetic properties (see \cite{Yessen2011,Yessen2011a}, and the survey \cite{Mei201x}); this operator is also particularly attractive as a generalization of the extensively studied Fibonacci Schr\"odinger operator (see \cite{Damanik2013,Damanik2014e} and references therein). It is known that the spectrum of $H_\lambda$ is a Cantor set of zero Lebesgue measure whenever it is different from the free Schr\"odinger operator, i.e., whenever $\lambda \in \mathcal{R} \setminus \set{(1,0)}$ \cite[Theorem~2.1]{Yessen2011a}. Moreover, in this case, the spectral measures and the density of states measure are purely singular continuous. It is worth noting that in contrast to the Schr\"odinger Fibonacci Hamiltonian (i.e. the case in which $\mathfrak{p} = 1$), the spectrum of $H_\lambda$ exhibits rich multifractal structure in general (compare \cite[Theorem 1.1]{Damanik2014e} and \cite[Theorem 2.3]{Yessen2011a}).

\begin{rem}
Notice certain geometric restrictions in \cite[Theorem 2.3]{Yessen2011a}. Similar restrictions in the Schr\"odinger case were recently lifted in \cite{Damanik2014e}. In this paper, we lift those restrictions in full generality (Theorem \ref{thm:transversality} below), so that \cite[Theorem 2.3]{Yessen2011a} holds without any restrictions on the parameters.
\end{rem}

Like in the one-dimensional case, the interest in the square model is driven by the desire to understand fine quantum-dynamical properties of two-dimensional quasicrystals. Indeed, the square model was proposed as a model that is simpler than the commonly accepted model for a two-dimensional quasicrystal; unlike the general model, the square  model is susceptible to a rigorous study since its spectral data can be expressed in terms of the spectral data of the one-dimensional model. Moreover, like the general model, the square model is physically motivated (e.g. \cite{Mandel2006,Mandel2008,Mandel2008b,Ilan2004}). However, even this simplified case presents substantial challenges, and some of the resulting problems are of independent mathematical interest (e.g.\ sums of Cantor sets and convolutions of measures supported on them; see the introduction in \cite{Damanik2013X} for a deeper discussion and references).

Let us denote the spectrum of $H_\lambda$ by $\Sigma_\lambda$, and that of $H_{(\lambda_1, \lambda_2)}^2$ by $\Sigma_{(\lambda_1, \lambda_2)}^2$. By general spectral-theoretic arguments (see \cite[Appendix~A]{Damanik2013X}, for example),
\begin{align*}
\Sigma_{(\lambda_1, \lambda_2)}^2
=
\Sigma_{\lambda_1} + \Sigma_{\lambda_2}
\eqdef
\set{a + b: a \in\Sigma_{\lambda_1}, b\in\Sigma_{\lambda_2}},
\end{align*}
and the relevant measures supported on $\Sigma_{(\lambda_1, \lambda_2)}^2$ (i.e.\ the spectral and the density of states measures) are convolutions of the respective measures of the one-dimensional Hamiltonian.

Regarding the topology of $\Sigma_{(\lambda_1, \lambda_2)}^2$, some partial results are available. For example, if $\lambda_1 = (1, \mathfrak{q}_1)$ and $\lambda_2=(1, \mathfrak{q_2})$ with $\mathfrak{q}_i$, $i=1, 2$, sufficiently close to zero, $\Sigma_{(\lambda_1, \lambda_2)}^2$ is an interval, while for all $\mathfrak{q_i}$, $i = 1, 2$, sufficiently large, $\Sigma_{(\lambda_1, \lambda_2)}^2$ is a Cantor set of Hausdorff dimension strictly smaller than one, and hence also of zero Lebesgue measure \cite{Damanik2008,Damanik2010}. These properties are established via the dynamical properties of the associated trace map (we recall the basics in Section \ref{sec:background}; see \cite{Damanik2013} for details). Similar results hold for parameters of the form $(\mathfrak{p}, 0)$ (\cite[Appendix A]{Damanik2010} shows that all the spectral properties of the operators $H_{(1, \mathfrak{q})}$ that can be obtained from the trace map dynamics can also be obtained via the same methods, without modification, for the operator of the form $H_{(\mathfrak{p}, 0)}$). The topological picture in the intermediate regimes is currently unclear.

Let us adopt the following terminology for convenience: a set $A\subset\R$ is said to be \emph{interval-Cantor mixed} provided that $A$ has nonempty interior, and for some $a, b\in A$, $a < b$, $[a, b]\cap A$ is a (nonempty) Cantor set. In this paper, we prove

\begin{thm}\label{thm:thm2}
There exists a {\rm(}nonempty{\rm)} open set $\mathcal{U}\subset\mathcal{R}$ such that for every $\lambda_1, \lambda_2\in\mathcal{U}$, $\Sigma_{(\lambda_1, \lambda_2)}^2$ is interval-Cantor mixed.
\end{thm}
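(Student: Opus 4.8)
\emph{Strategy.} The plan is to combine the trace‑map description of $\Sigma_\lambda$ with two classical facts about the geometry of Cantor sets — the sum version of the Newhouse gap lemma (Astels' theorem), which manufactures an interval, and the subadditivity $\bdim(A+B)\le\bdim A+\bdim B$ of upper box dimension, which certifies a Cantor piece. By the trace‑map formalism together with Theorem~\ref{thm:transversality}, for $\lambda\in\mathcal R\setminus\{(1,0)\}$ the set $\Sigma_\lambda$ is a dynamically defined (conformal, bounded–distortion) Cantor set: $\hdim$ and $\bdim$ agree on $\Sigma_\lambda$ and on its clopen subpieces, the local dimension function $\lbdim_\lambda(\cdot)$ is governed by the thermodynamic formalism of the associated hyperbolic set, and — once the geometric restrictions are removed, as promised by Theorem~\ref{thm:transversality} — the range of $\lbdim_\lambda$ is a nondegenerate interval that is genuinely wide provided $\mathfrak p$ is bounded away from $1$ (the unrestricted form of \cite[Theorem~2.3]{Yessen2011a}, cf.\ \cite[Theorem~1.1]{Damanik2014e}). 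All of these quantities — together with the Newhouse thickness of clopen subpieces and the spectral edges $E^+(\lambda)\eqdef\max\Sigma_\lambda$, $E^-(\lambda)\eqdef\min\Sigma_\lambda$ — depend continuously on $\lambda$, since the hyperbolic set varies continuously and the relevant thermodynamic data are analytic. So I would aim for a nonempty open $\mathcal U\subset\mathcal R$ on which $\Sigma_\lambda$ simultaneously (i) contains a ``thick'' clopen piece and (ii) has a ``thin'' spectral edge.

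\emph{Step 1: nonempty interior.} First I would produce a nonempty open $\mathcal U_1\subset\mathcal R$ such that every $\lambda\in\mathcal U_1$ admits a clopen $P_\lambda\subset\Sigma_\lambda$ with Newhouse thickness $\tau(P_\lambda)>1$; such a piece is obtained by selecting a finite branch of the trace‑map iteration along which the geometry of $\Sigma_\lambda$ is most regular and the contraction ratios are close enough to $1$, and continuity of the hyperbolic set and of its derivative cocycle gives the open set. Then, for any $\lambda_1,\lambda_2\in\mathcal U_1$, since $\tau(P_{\lambda_1})\,\tau(P_{\lambda_2})>1$, Astels' theorem shows that $P_{\lambda_1}+P_{\lambda_2}$ is a nondegenerate interval; hence $\Sigma^2_{(\lambda_1,\lambda_2)}=\Sigma_{\lambda_1}+\Sigma_{\lambda_2}\supseteq P_{\lambda_1}+P_{\lambda_2}$ has nonempty interior. (Alternatively this step can be run from $\hdim\Sigma_{\lambda_1}+\hdim\Sigma_{\lambda_2}>1$ and the transversality of the trace map provided by Theorem~\ref{thm:transversality}.)

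\emph{Step 2: a Cantor piece at the edge, and conclusion.} Next I would produce a nonempty open $\mathcal U_2\subset\mathcal R\setminus\{(1,0)\}$ such that for $\lambda\in\mathcal U_2$ the right edge is thin: there is $\delta_\lambda>0$ with
\[
\bdim\big(\Sigma_\lambda\cap[E^+(\lambda)-\delta_\lambda,\,E^+(\lambda)]\big)<\tfrac12 .
\]
In the trace‑map picture $E^+(\lambda)$ is the image of a specific hyperbolic periodic point, and $\lbdim_\lambda$ there is an explicit function of its Lyapunov exponents; the claim is that for $\mathfrak p$ in a suitable range (where the multifractal spectrum of $\lbdim_\lambda$ is wide) this value drops below $\tfrac12$, and continuity in $\lambda$ gives the open set. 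Set $\mathcal U\eqdef\mathcal U_1\cap\mathcal U_2$. Assuming $\mathcal U\neq\varnothing$, fix $\lambda_1,\lambda_2\in\mathcal U$ and put $s^\ast\eqdef E^+(\lambda_1)+E^+(\lambda_2)=\max\Sigma^2_{(\lambda_1,\lambda_2)}$. Since $\Sigma_{\lambda_1}$ and $\Sigma_{\lambda_2}$ are perfect, so is $\Sigma^2_{(\lambda_1,\lambda_2)}$; choose $\varepsilon\in(0,\min(\delta_{\lambda_1},\delta_{\lambda_2}))$ with $s^\ast-\varepsilon\notin\Sigma^2_{(\lambda_1,\lambda_2)}$. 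If $u\in\Sigma_{\lambda_1}$, $v\in\Sigma_{\lambda_2}$ and $u+v\ge s^\ast-\varepsilon$, then $u\ge E^+(\lambda_1)-\varepsilon$ and $v\ge E^+(\lambda_2)-\varepsilon$, so $u,v$ lie in the thin edge pieces of Step~2; hence, by subadditivity of box dimension,
\[
\bdim\big(\Sigma^2_{(\lambda_1,\lambda_2)}\cap[s^\ast-\varepsilon,\,s^\ast]\big)\le\tfrac12+\tfrac12<1 .
\]
Thus $K\eqdef\Sigma^2_{(\lambda_1,\lambda_2)}\cap[s^\ast-\varepsilon,s^\ast]$ is a compact set of Lebesgue measure zero, hence nowhere dense, and it is perfect (every point of $K$ is a limit point of $K$, using perfectness of $\Sigma^2_{(\lambda_1,\lambda_2)}$, $s^\ast=\max$, and $s^\ast-\varepsilon\notin\Sigma^2_{(\lambda_1,\lambda_2)}$). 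A compact, perfect, nowhere‑dense subset of $\R$ is a Cantor set; with $a\eqdef\min K<b\eqdef s^\ast$ one has $a,b\in\Sigma^2_{(\lambda_1,\lambda_2)}$ and $[a,b]\cap\Sigma^2_{(\lambda_1,\lambda_2)}=K$. Combined with Step~1, this shows $\Sigma^2_{(\lambda_1,\lambda_2)}$ is interval‑Cantor mixed.

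\emph{Main obstacle.} The heart of the matter is that Steps~1 and~2 hold on overlapping parameter sets, i.e.\ $\mathcal U_1\cap\mathcal U_2\neq\varnothing$: one must exhibit a genuine regime — necessarily with $\mathfrak p$ bounded away from $1$, where \cite[Theorem~2.3]{Yessen2011a} supplies a sufficiently wide range of local dimensions — in which $\Sigma_\lambda$ carries a Newhouse‑thick clopen piece (forcing some local dimensions to exceed $\log 2/\log 3$, with controlled bounded geometry there) while its spectral edge has box dimension below $\tfrac12$. This demands quantitative control of the Fibonacci trace map: locating the band‑edge periodic orbit and estimating its Lyapunov exponents, bounding the distortion and contraction ratios along the ``bulk'' branches, and checking that these land in the two required ranges at once; openness then follows from structural stability of the hyperbolic set and analyticity of the thermodynamic quantities.
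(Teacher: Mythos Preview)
Your overall architecture is right and matches the paper: produce a thick clopen subpiece (Newhouse/Astels gives an interval in the sum) and a thin spectral edge (subadditivity of box dimension gives a Cantor piece in the sum), then take the intersection of the two parameter regions. The paper's Lemma~\ref{lem:app-4} is exactly your ``$\mathcal U_1\cap\mathcal U_2\neq\varnothing$'', and the conclusion is assembled just as you do.

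Where you go astray is in the mechanism you propose for resolving the obstacle. You suggest locating the band-edge periodic orbit, estimating its Lyapunov exponents, and simultaneously bounding distortion along ``bulk'' branches. That is not what is done, and it is not clear it would succeed without substantial additional work. The paper instead exploits a feature of the \emph{tridiagonal} model that is absent in the Schr\"odinger case $\mathfrak p=1$: the line $\ell_\lambda$ does not sit on a single invariant surface $S_V$, but crosses a whole family of them, and the Fricke--Vogt invariant along the line is the explicit affine function
\[
V_\lambda(E)=I(\ell_\lambda(E))=\frac{\mathfrak q(\mathfrak p^2-1)E+\mathfrak q^2+(\mathfrak p^2-1)^2}{4\mathfrak p^2}.
\]
So the ``wide multifractal spectrum'' you invoke is realized geometrically by the \emph{range of $V$ across $\Sigma_\lambda$}, not by analyzing a single hyperbolic periodic point. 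Two known asymptotics then do all the work: the thickness of $\Omega_V\cap S_V$-Cantor sets tends to infinity as $V\downarrow 0$ (Damanik--Gorodetski), while $\hdim(\Omega_V)\to 0$ as $V\to\infty$ (Damanik--Embree--Gorodetski--Tcheremchantsev). The remaining task is to choose $\lambda$ so that $V_\lambda(E)$ ranges from nearly $0$ at one extremum to arbitrarily large at the other. For the large end this is immediate from the affine formula and the fact that $\|H_\lambda\|\asymp\max(|\mathfrak p|,|\mathfrak q|)$. For the small end the paper shows (Lemma~\ref{lem:app-1}) that the strong-stable manifold of the singular fixed point $(1,1,1)\in S_0$ escapes to infinity in the $z$-coordinate, so for arbitrarily large $|\mathfrak p|$ there is a $\mathfrak q$ with $\ell_{(\mathfrak p,\mathfrak q)}$ meeting $S_0$ at a spectral point; a small perturbation in $\mathfrak q$ then lands that intersection on $S_{V_\delta}$ with $V_\delta>0$ small, and a geometric comparison with the stable lamination near $S_0$ (Lemma~\ref{lem:app-3}) converts ``thickness on $S_{V}$ is large for small $V$'' into ``local thickness of $\Sigma_\lambda$ at $E_0$ is large''. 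Openness in $\lambda$ follows from continuity, as you say. In short, the missing idea is to use the explicit linear dependence of $V_\lambda(E)$ on $E$ together with the $V\to 0$ and $V\to\infty$ asymptotics, rather than any direct Lyapunov-exponent computation.
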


\begin{rem}\label{rem:thm1}
We suspect that for all $\lambda_1, \lambda_2\in\mathcal{R}\setminus\set{(1, 0)}$ sufficiently close to $(1, 0)$, $\Sigma_{(\lambda_1, \lambda_2)}^2$ is an interval; at present, however, our techniques only show that $\Sigma_{(\lambda_1, \lambda_2)}^2$ is a union of compact intervals. More details are given in Remark \ref{rem:thm2-details}.
\end{rem}

In \cite{Damanik2013X} the authors proved that for almost all pairs $(\mathfrak{q}_1, \mathfrak{q}_2)$ sufficiently close to $(0, 0)$, the convolution of the density of states measures of the Hamiltonians $H_{(1, \mathfrak{q}_1)}$ and $H_{(1, \mathfrak{q}_2)}$, supported on $\Sigma_{(1, \mathfrak{q}_1)} + \Sigma_{(1, \mathfrak{q}_2)}$, is absolutely continuous. In this paper, using the aforementioned multifractality of $\Sigma_\lambda$ and appealing to the techniques of \cite{Damanik2013X}, we prove the following measure-theoretic analog of Theorem \ref{thm:thm2}.

Denote the density of states measure for the Hamiltonian $H_{(\lambda_1, \lambda_2)}^2$ by
\begin{align}\label{eq:dos}
dk_{(\lambda_1, \lambda_2)}^2
=
dk_{\lambda_1} * dk_{\lambda_2},
\end{align}
where $dk_\lambda$ is the density of states measure for $H_\lambda$ (see, for example, \cite[Chapter~5]{Teschl1999} for definitions), and $*$ is the convolution of measures (see \cite[Appendix~A]{Damanik2013X} for the relation \eqref{eq:dos}). For $\lambda_1, \lambda_2\in\mathcal{R}$, let us denote by $(dk_{(\lambda_1, \lambda_2)}^2)_\mathrm{ac}$ and $(dk_{(\lambda_1, \lambda_2)}^2)_\mathrm{sc}$ the absolutely continuous and the singular continuous components (with respect to Lebesgue measure) of $dk_{(\lambda_1, \lambda_2)}^2$, respectively.

\begin{thm}\label{thm:thm1}
There exists a {\rm(}nonempty{\rm)} open set $\mathcal{U}\subset\mathcal{R}$ such that for every $\lambda_1\in\mathcal{U}$, there exists a full-measure subset $\mathcal V = \mathcal V(\lambda_1) \subseteq \mathcal U$ with the property that $(dk_{(\lambda_1, \lambda_2)}^2)_\mathrm{ac}\neq 0$ and $(dk_{(\lambda_1, \lambda_2)}^2)_\mathrm{sc}\neq 0$ whenever $\lambda_2 \in \mathcal V$.
\end{thm}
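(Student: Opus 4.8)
The plan is to build the mixed density of states measure in two pieces, mirroring the two-sided structure of Theorem \ref{thm:thm2}. Recall that $dk_{(\lambda_1,\lambda_2)}^2 = dk_{\lambda_1} * dk_{\lambda_2}$ is supported on $\Sigma_{\lambda_1} + \Sigma_{\lambda_2}$, which (for the parameters constructed in the proof of Theorem \ref{thm:thm2}) decomposes into a region $I_{\mathrm{int}}$ where $\Sigma_{\lambda_1}+\Sigma_{\lambda_2}$ fills an interval and a region $I_{\mathrm{Can}}$ where it is a Cantor set. On $I_{\mathrm{Can}}$ the convolution is automatically singular continuous: it is supported on a set of zero Lebesgue measure, and it is continuous (non-atomic) because the one-dimensional density of states measures $dk_{\lambda_i}$ are non-atomic (indeed purely singular continuous, by \cite[Theorem~2.1]{Yessen2011a}), so their convolution has no atoms. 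Restricting $dk_{(\lambda_1,\lambda_2)}^2$ to $I_{\mathrm{Can}}$ thus already produces a nonzero singular continuous component, giving $(dk_{(\lambda_1,\lambda_2)}^2)_{\mathrm{sc}} \neq 0$ for \emph{every} pair in the open set $\mathcal U$.

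The substantive half is producing a nonzero absolutely continuous component on the interval portion $I_{\mathrm{int}}$, and this is where I would invoke the machinery of \cite{Damanik2013X}. The idea is that when $\lambda_1, \lambda_2$ lie near the appropriate regime, the local Hausdorff dimensions of $\Sigma_{\lambda_1}$ and $\Sigma_{\lambda_2}$ on the relevant energy ranges sum to more than $1$; by the multifractal analysis for $H_\lambda$ (here is precisely where the lifting of the geometric restrictions in \cite[Theorem~2.3]{Yessen2011a} via Theorem \ref{thm:transversality} is needed, since we must control local dimensions uniformly across the spectrum), one arranges $\dim_{\mathrm H}^{\mathrm{loc}}(\Sigma_{\lambda_1},E_1) + \dim_{\mathrm H}^{\mathrm{loc}}(\Sigma_{\lambda_2},E_2) > 1$ on an open set of energies. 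One then applies the convolution criterion of \cite{Damanik2013X}: for Lebesgue-almost every parameter $\lambda_2$ (with $\lambda_1$ fixed) in that regime, the convolution $dk_{\lambda_1} * dk_{\lambda_2}$ is absolutely continuous in a neighborhood of such energies. This is the source of the almost-everywhere (rather than everywhere) statement and of the parameter-dependent set $\mathcal V = \mathcal V(\lambda_1)$. Restricting the convolution to a small interval around such an energy yields a nonzero absolutely continuous piece, hence $(dk_{(\lambda_1,\lambda_2)}^2)_{\mathrm{ac}} \neq 0$.

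Concretely, the steps are: (i) take $\mathcal U$ to be (a subset of) the open set furnished by Theorem \ref{thm:thm2}, shrinking if necessary so that both the interval-filling and Cantor behaviors of $\Sigma_{\lambda_1}+\Sigma_{\lambda_2}$ persist and so that the dimension estimates below hold; (ii) for each fixed $\lambda_1 \in \mathcal U$, use the multifractal formalism for $\Sigma_\lambda$ together with the unrestricted transversality (Theorem \ref{thm:transversality}) to identify an open set of energies where the sum of local dimensions exceeds $1$; (iii) apply the absolute-continuity-of-convolutions theorem from \cite{Damanik2013X} to get a full-measure set $\mathcal V(\lambda_1) \subseteq \mathcal U$ over which $dk_{\lambda_1} * dk_{\lambda_2}$ has an absolutely continuous component localized near those energies; (iv) combine with the elementary observation on $I_{\mathrm{Can}}$ from the first paragraph to conclude both components are nonzero for $\lambda_2 \in \mathcal V(\lambda_1)$.

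The main obstacle is step (ii): one needs the local Hausdorff dimension of $\Sigma_\lambda$ to be controlled — and in particular to exceed $1/2$ somewhere, or more precisely to have the two local dimensions sum past $1$ — on the same energy window where the $\Sigma_{\lambda_1}+\Sigma_{\lambda_2}$ sum is already an interval, \emph{and simultaneously} to have a disjoint window where the sum is Cantor; reconciling these requires a careful choice of parameters and a quantitative version of the trace-map/multifractal analysis, which is exactly what the removal of the geometric restrictions from \cite[Theorem~2.3]{Yessen2011a} is designed to supply. A secondary technical point is checking that the hypotheses of the convolution theorem of \cite{Damanik2013X} (which was stated for the $\mathfrak p = 1$ Schrödinger case) transfer verbatim to the tridiagonal model $H_\lambda$; this should follow since, as noted in the excerpt, the trace-map dynamics and hence the relevant fractal-geometric inputs are the same, but it must be verified.
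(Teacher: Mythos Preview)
Your high-level strategy matches the paper's: the singular continuous piece comes for free from the Cantor portion of $\Sigma_{\lambda_1}+\Sigma_{\lambda_2}$ (positive $dk$-mass on a Lebesgue-null set, no atoms), and the absolutely continuous piece comes from an adaptation of the convolution machinery of \cite{Damanik2013X} on the region where local dimensions are large. The paper carries this out via Lemma~\ref{lem:Jake} (the measure-theoretic analog of Lemma~\ref{lem:app-4}) together with Propositions~\ref{prop:key_prop} and \ref{prop:dgs:acconv}.

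There is, however, a genuine gap in your step~(iii), and what you flag as a ``secondary technical point'' is in fact the crux. The absolute-continuity criterion extracted from \cite{Damanik2013X} (Proposition~\ref{prop:dgs:acconv} in the paper) does not take as input merely ``sum of local dimensions exceeds $1$''; it requires in addition that along a one-parameter family $t\mapsto\alpha(t)$ one has a uniform lower bound $\bigl|\tfrac{d}{dt}V(\omega,t)\bigr|>\delta$ for all $\omega$ in the symbolic space and all $t$ in a nondegenerate interval. This monotone-variation-of-the-invariant hypothesis is what drives the Lyapunov-exponent separation in Proposition~\ref{key_prop} and ultimately the transversality estimate that yields absolute continuity for a.e.\ $t$. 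Your outline does not address how to produce such a family. The paper handles this by choosing an \emph{analytic} curve $\alpha:[0,1]\to\mathcal R$ with $\alpha(0)=(1,0)$, $\alpha(1)=\lambda_1$, passing through the target $\tilde\lambda$, and lying in $\mathcal U$ on $[t_0,1]$; analyticity of $t\mapsto V_{\alpha(t)}(E(t))$ together with the observation that it cannot be constant (Lemma~\ref{lem:v-non-constant}: constancy would force $|E(t)|\to\infty$ as $t\to 0$, contradicting uniform boundedness of the spectra) forces $V'$ to have only finitely many zeros on $[t_0,1]$, after which one decomposes into subintervals $J_n$ on each of which the derivative bound holds. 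Without this step you cannot invoke the \cite{Damanik2013X} result, and it is not the dimension estimate of your step~(ii) that is the ``main obstacle''.

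A smaller but real correction: the relevant input is the local dimension of the \emph{measure} $dk_\lambda$ (as in \eqref{dimension}), not the local Hausdorff dimension of the \emph{set} $\Sigma_\lambda$. The paper's Lemma~\ref{lem:Jake} furnishes $\lhdim(dk_\lambda,E)>\Delta>\tfrac12$ near some $E_0$, which is what Proposition~\ref{prop:dgs:acconv} consumes as the hypothesis on $\eta$.
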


\begin{rem}\label{rem:thm2}
A nonempty open $\mathcal{U}\subset\mathcal{R}$ can be chosen so as to satisfy Theorems \ref{thm:thm2} and \ref{thm:thm1} simultaneously; see Remark \ref{rem:thm2-extension} below.

We suspect that one can choose $\mathcal U$ in such a way that the conclusion of Theorem~\ref{thm:thm1} holds for all pairs $\lambda_1,\lambda_2 \in \mathcal U$, not just a full-measure set, but our proof does not yield this stronger conclusion.
\end{rem}

As another application of the methods of this paper, we can also contruct separable two-dimensional continuum quasicrystal models which exhibit mixed topological structures in the spectrum. To construct an explicit example of a continuum quasicrystal model, take $\lambda > 0$ (note the change in parameter space) and $\theta \in \T$. Define $\omega_n$ as before, take
$$
V_\lambda(x)
=
\sum_{n \in \Z} \lambda \omega_n \chi_{[n,n+1)}(x),
$$
and define the self-adjoint operator $H_\lambda$ on $L^2(\R)$ by
\begin{align*}
H_\lambda \phi = -\phi'' + V_\lambda \phi.
\end{align*}

Models of this type were considered in \cite{DFG2014,LenzSeifStoll2014}. By the results therein, we know that $\sigma(H_\lambda)$ does not depend on $\theta$ and is a (noncompact) Cantor set of zero Lebesgue measure for all $\lambda > 0$. We denote this common spectrum by $\Sigma_\lambda$. As before, we define the square operator on $L^2(\R^2) \cong L^2(\R) \otimes L^2(\R)$ by $H_{(\lambda_1,\lambda_2)}^2 = H_{\lambda_1} \otimes I_2 + I_1 \otimes H_{\lambda_2}$; the spectrum of the square Hamiltonian is given by $\Sigma_{(\lambda_1, \lambda_2)}^2 = \Sigma_{\lambda_1} + \Sigma_{\lambda_2}$, as above.

\begin{thm}\label{thm:thm3}
For all $\lambda_1, \lambda_2 > 0$, the interior of $\Sigma_{(\lambda_1, \lambda_2)}^2$ is nonempty. Moreover, for all $\lambda_1, \lambda_2$ sufficiently large, there exists an interval $I_0$ such that $I_0 \cap \Sigma_{(\lambda_1, \lambda_2)}^2$ is a nonempty Cantor set.
\end{thm}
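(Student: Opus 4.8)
The plan is to work entirely with the arithmetic sum $\Sigma^2_{(\lambda_1,\lambda_2)}=\Sigma_{\lambda_1}+\Sigma_{\lambda_2}$ and to analyze each one–dimensional spectrum $\Sigma_{\lambda_j}$ through the trace map of the continuum Fibonacci operator, in the spirit of \cite{DFG2014}. Two geometric facts about $\Sigma_\lambda$ are the inputs I would extract from that analysis. \emph{(A)} For every $\lambda>0$ and every $d>0$, $\Sigma_\lambda$ contains a Cantor subset of thickness $>1$ and diameter $<d$; informally, at sufficiently high energy the two continuum transfer matrices become exponentially close, so the corresponding piece of $\Sigma_\lambda$ behaves like a weakly coupled — hence very thick — Fibonacci spectrum, and by the renormalization structure such pieces recur at all small scales. \emph{(B)} There is a constant $C$ with $\inf\Sigma_\lambda\le C$ for all $\lambda\ge 1$, and for every fixed $R>0$, $\bdim\big(\Sigma_\lambda\cap[\inf\Sigma_\lambda,\inf\Sigma_\lambda+R]\big)\to 0$ as $\lambda\to\infty$ (for these dynamically defined sets Hausdorff and box dimensions coincide); here the low–energy dynamics is a tunnelling problem between single potential wells separated by barriers of two very different widths, which forces a strongly contrasted — hence thin — spectrum. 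Granting \emph{(A)} and \emph{(B)}, the rest is soft.

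For the first assertion, fix $\lambda_1,\lambda_2>0$ and use \emph{(A)} to pick Cantor subsets $C_j\subseteq\Sigma_{\lambda_j}$ with $\tau(C_j)>1$ and $\diam C_1=\diam C_2$. After translating so that $\min C_1=\min C_2=0$, note that $t\in C_1+C_2$ iff $C_1\cap(t-C_2)\neq\emptyset$, that $\tau(t-C_2)=\tau(C_2)>1$, and that for $t$ in a neighbourhood of $\tfrac12\diam C_1$ the convex hulls of $C_1$ and of $t-C_2$ coincide, so neither lies in a gap of the other. Since $\tau(C_1)\,\tau(t-C_2)>1$, the Newhouse gap lemma gives $C_1\cap(t-C_2)\neq\emptyset$ for all such $t$; hence $C_1+C_2\subseteq\Sigma^2_{(\lambda_1,\lambda_2)}$ contains a nondegenerate interval, and this holds for \emph{every} $\lambda_1,\lambda_2>0$.

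For the second assertion, take $\lambda_1,\lambda_2$ large, set $m_j=\inf\Sigma_{\lambda_j}$ (so $m_1+m_2\le 2C$ by \emph{(B)}), pick $e_j\in\Sigma_{\lambda_j}$ with $e_j\le m_j+1$, and let $I_0$ be a short closed interval with $e_1+e_2$ in its interior. If $a\in\Sigma_{\lambda_1}$, $b\in\Sigma_{\lambda_2}$ and $a+b\in I_0$, then $m_1\le a\le\sup I_0-m_2$ and $m_2\le b\le\sup I_0-m_1$, so both summands are confined to bounded windows at the bottoms of the respective spectra; hence
\[
I_0\cap\Sigma^2_{(\lambda_1,\lambda_2)}\ \subseteq\ \big(\Sigma_{\lambda_1}\cap[m_1,\sup I_0-m_2]\big)+\big(\Sigma_{\lambda_2}\cap[m_2,\sup I_0-m_1]\big).
\]
Because the summation map $(x,y)\mapsto x+y$ is Lipschitz and box dimension is subadditive under Cartesian products, the right–hand side has box dimension at most the sum of the box dimensions of the two factors, which by \emph{(B)} is $<1$ once $\lambda_1,\lambda_2$ are large. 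Thus $I_0\cap\Sigma^2_{(\lambda_1,\lambda_2)}$ has Hausdorff dimension below $1$, hence Lebesgue measure zero, hence empty interior; after shrinking $I_0$ slightly so that its endpoints avoid $\Sigma^2_{(\lambda_1,\lambda_2)}$, the set $I_0\cap\Sigma^2_{(\lambda_1,\lambda_2)}$ is nonempty (it contains $e_1+e_2$), compact, and perfect (the intersection of the perfect set $\Sigma_{\lambda_1}+\Sigma_{\lambda_2}$ with an interval meeting it only in its interior), hence a Cantor set.

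The hard part is establishing \emph{(A)} and, above all, \emph{(B)}: one must control the Fibonacci trace map uniformly at large coupling, showing that the hyperbolic set governing the low–energy part of the spectrum has contraction rates so strong that the associated box (and hence Hausdorff) dimension tends to zero as $\lambda\to\infty$, while the lower spectral edge remains bounded. This is exactly the type of statement the trace–map and thermodynamic–formalism methods behind \cite{DFG2014} are designed to deliver, but doing it carefully and uniformly in the coupling is the substance of the argument; by contrast, the thickness/Newhouse step and the localization/dimension step above are routine once \emph{(A)} and \emph{(B)} are available.
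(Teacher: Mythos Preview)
Your plan matches the paper's proof exactly: input \emph{(A)} is Lemma~\ref{l:bigE:thick}, input \emph{(B)} is the combination of Lemmas~\ref{lem:ground} and \ref{l:bigl:thin}, and the soft deductions via Newhouse's gap lemma and dimension subadditivity of sums are the same as in Lemma~\ref{lem:biglam:thin} and the final paragraph of Section~\ref{sec:proof-thm3}. The paper supplies \emph{(A)} and \emph{(B)} through the explicit Fricke--Vogt invariant
\[
I(E,\lambda)=\frac{\lambda^2}{4E(E-\lambda)}\sin^2\sqrt{E}\,\sin^2\sqrt{E-\lambda},
\]
which tends to $0$ as $E\to\infty$ (so the curve $\ell_\lambda$ approaches the Cayley cubic and local thickness blows up, giving \emph{(A)}) and which tends to $\infty$ as $\lambda\to\infty$ at $E=E_0(\lambda)$ (so the local Hausdorff dimension there tends to $0$ by \cite[Theorem~6.5]{DFG2014}, giving \emph{(B)}). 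One concrete point your tunnelling heuristic for \emph{(B)} does not capture: the paper needs not merely $\inf\Sigma_\lambda\le C$ but specifically $C<\pi^2$ (it obtains $C<3$ via a test function supported on an interval of length two where $V_\lambda\equiv 0$), since otherwise the factor $\sin^2\sqrt{E_0(\lambda)}$ could vanish and $I(E_0(\lambda),\lambda)$ would fail to diverge---so the quantitative bound on the ground state, not just its boundedness, is what makes \emph{(B)} go through.
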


\begin{rem}\label{rem:thm3}
A few remarks are in order here.
\begin{enumerate}\itemsep0.5em

\item We suspect that more is true, namely, for all $\lambda_1, \lambda_2 > 0$, there exists $E_1 = E_1(\lambda_1), E_2 = E_2(\lambda_2) > 0$ such that $\Sigma_{(\lambda_1, \lambda_2)}^2$ contains the ray $[E_1 + E_2, \infty)$, but our methods do not prove this. If this is the case, the small-coupling behavior of $E_i$ is of interest -- for example, does one have $E_i(\lambda_i) \to 0$ as $\lambda_i \to 0$? Some details on this are contained in Remark~\ref{rem:thm3-details}.

\item It follows directly from the results of \cite{DFG2014} that for all $\lambda \geq 0$, the Hausdorff dimension of $\Sigma_\lambda$ is equal to one. It is natural to ask where in the spectrum the Hausdorff dimension accumulates. For example, in the discrete Jacobi setting, the Hausdorff dimension may accumulate only at one of the extrema of the spectrum \cite{Yessen2011a}. In the continuum case, it turns out, the Hausdorff dimension accumulates at infinity but, for certain values of $\lambda$, it may also accumulate at finitely many points in the spectrum. A more extensive discussion is given in Remark \ref{rem:accum}.

\item In the continuum quasicrystal setting, there is no need to restrict to locally constant potential pieces. We do this so that we can control the location of the ground state and appeal to explicit expressions for the Fricke--Vogt invariant, but it is interesting to ask whether general results of this type hold for any choice of $L^2_{\loc}$ continuum potential modelled on the Fibonacci subshift.
\end{enumerate}
\end{rem}

\section{Proof of Theorems \ref{thm:thm2}, \ref{thm:thm1}, and \ref{thm:thm3}}

Our proof relies on the dynamics of the associated renormalization map, the Fibonacci trace map. Detailed discussions on the dynamics of this map are contained in \cite{Damanik2009,Cantat2009,Casdagli1986,Roberts1996,Roberts1994b,Roberts1994} and references therein, and are complemented by \cite[Section 4.2]{Yessen2011}. We use freely standard notions from the theory of (partially) hyperbolic dynamical systems; \cite{Hasselblatt2002b,Hasselblatt2006} can serve as comprehensive references, while sufficient background is also given in \cite{Damanik2009} and the appendices of \cite{Yessen2011}. Let us briefly describe our approach.

\subsection{Background}\label{sec:background}

Given $\lambda = (\p,\q) \in \mathcal{R}$, define
\begin{align}\label{eq:line}
\ell_\lambda(E)
\eqdef\left(\frac{E-\mathfrak{q}}{2}, \frac{E}{2\mathfrak{p}}, \frac{1+\mathfrak{p}^2}{2\mathfrak{p}}\right),
\quad
E \in \R.
\end{align}
In what follows, we use the notation $\ell_\lambda$ to also denote the image of $\R$ under $\ell_\lambda$, that is, the line $\ell_\lambda(\R)$.

Given $x,y,z \in \R$, define the so-called Fricke-Vogt invariant by
\begin{align}\label{eq:invariant}
I(x,y,z) \eqdef x^2 + y^2 + z^2 - 2xyz-1,
\end{align}
and, for $V \geq 0$, consider its level surfaces
\begin{align}\label{eq:surf}
S_V\eqdef\set{(x,y,z)\in\R^3: I(x,y,z) = V}
\end{align}
(see Figure \ref{fig:fvplots} for some plots). Finally, define the \emph{Fibonacci trace map} $f: \R^3\rightarrow \R^3$ by
\begin{align*}
f(x,y,z)=(2xy-z, x, y).
\end{align*}

\begin{figure}[t!]
\subfigure[$V < 0$]{
\includegraphics[scale=0.22]{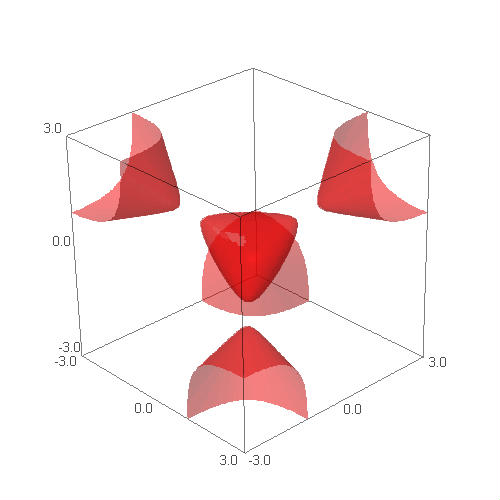}
}
\subfigure[$V = 0$]{
\includegraphics[scale=0.22]{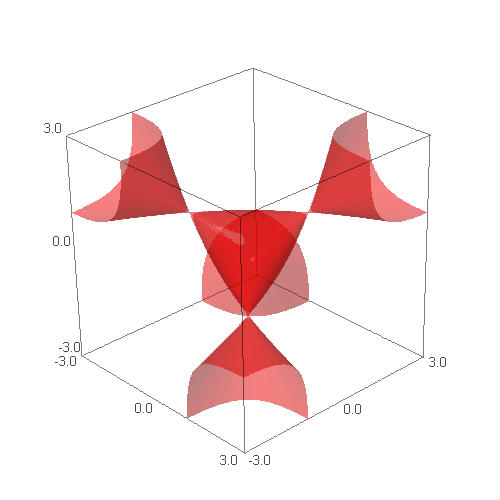}
}
\subfigure[$V > 0$]{
\includegraphics[scale=0.22]{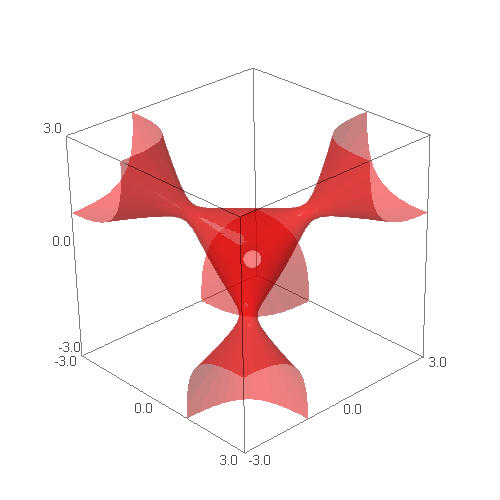}
}
\caption{Plots of the Fricke-Vogt invariant surfaces for a few values of $V$.}\label{fig:fvplots}
\end{figure}

It is easily verified that $f(S_V) = S_V$ for every $V$. Furthermore, for every $V > 0$, $f|_{S_V}$ is an Axiom A diffeomorphism \cite{Casdagli1986,Damanik2009,Cantat2009}. It is known that $E\in\R$ is in the spectrum $\Sigma_\lambda$ if and only if the forward orbit of $\ell_\lambda(E)$, $\set{f^n(\ell_\lambda(E))}_{n\in\N}$, is bounded (see the proof of Theorem 2.1 in \cite{Yessen2011a}, which is a generalization of \cite{Suto1987}). While there exist $E\in\R$ with $\ell_\lambda(E)\notin S_V$ for any $V \geq 0$, it turns out that such $E$ do not belong to $\Sigma_\lambda$ (see Section~3.2.1 in \cite{Yessen2011a}). Denoting by $\Omega_V$ the nonwandering set of $f|_{S_V}$, we have that $\bigcup_{V > 0}\Omega_V$ is partially hyperbolic (see \cite[Proposition~4.10]{Yessen2011} for the details). It is then proved that $\ell_\lambda(\Sigma_\lambda)$ is precisely the intersection set of $\ell_\lambda$ with the center-stable lamination \cite{Yessen2011a}.

This implies that $\Sigma_\lambda$ is Cantor set with quite rich multifractal structure; see \cite[Theorem~2.3]{Yessen2011a}. It is this multifractality that we exploit to prove Theorems \ref{thm:thm2}, \ref{thm:thm1}, and \ref{thm:thm3}.

Many of the results of the aforementioned works suffer from certain geometric restrictions. Namely, those results were established only for $\lambda\in\mathcal{R}$, where the line $\ell_{\lambda}$ intersects the center-stable lamination transversally. On the other hand, transversality was established for some rather restricted values of the parameters $\lambda\in\mathcal{R}$ (see \cite{Yessen2011a,Mei201x}). Recently, transversailty has been extended to all parameters of the form $(1, \mathfrak{q})$ in \cite{Damanik2014e}, and the case for $(\mathfrak{p}, 0)$ follows from \cite{Damanik2014e} without modification to the proofs (combine with the appendix in \cite{Damanik2010}). In this paper we extend transversality to \emph{all values} $(\mathfrak{p}, \mathfrak{q})\in \mathcal{R}$ as the following

\begin{thm}\label{thm:transversality}
For all $\lambda\in\mathcal{R}$ with $\lambda\neq (1, 0)$, the line $\ell_\lambda$ intersects the center-stable lamination of the Fibonacci trace map transversally.
\end{thm}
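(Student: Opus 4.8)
The plan is to reduce transversality to a statement about the trace map dynamics that can be checked by direct computation on a suitable fundamental domain, following the strategy of \cite{Damanik2014e} but keeping track of the extra parameter $\mathfrak q$. Recall that for $V>0$ the map $f|_{S_V}$ is Axiom A, so the nonwandering set $\Omega_V$ is a locally maximal hyperbolic set with well-defined stable and unstable laminations; the center-stable lamination of the partially hyperbolic set $\bigcup_{V>0}\Omega_V$ is, leaf by leaf, the union of the stable manifolds $W^s(x)$, $x\in\Omega_V$, together with the center (i.e.\ $V$-) direction. A tangency of $\ell_\lambda$ with the center-stable lamination at a point $p=\ell_\lambda(E)$ therefore means that $\ell_\lambda'(E)$ lies in the two-dimensional subspace $E^s_p \oplus E^c_p$, where $E^c_p$ is tangent to the curve $V\mapsto S_V$ through $p$. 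Since $\ell_\lambda'(E)=\left(\tfrac12,\tfrac1{2\mathfrak p},0\right)$ is a fixed nonzero vector, transversality is the assertion that this vector is never trapped in $E^s_p\oplus E^c_p$ along the orbit segment that stays bounded.

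First I would set up the renormalization: by $f$-invariance of $\ell_\lambda(\Sigma_\lambda)$ under the (pseudo-)dynamics (the orbit of $\ell_\lambda(E)$ eventually lands near $\Omega_V$ with $V=V(\lambda)$ the Fricke--Vogt value on the line), pulling back the tangent vector $\ell_\lambda'(E)$ by $Df$ and normalizing gives a cocycle over $f|_{S_V}$; transversality along the whole line is equivalent to transversality of the corresponding section over $\Omega_V$, which by the inclination (graph-transform) lemma is equivalent to a finite-time, open condition on a fundamental domain of the dynamics near the single-periodic orbits and the points where $\ell_\lambda$ meets $S_V$ for small $V$. Concretely, following \cite{Damanik2014e}, I would: (i) observe that $S_0$ is the union of the four coordinate planes' intersection pieces and the invariant "Cayley cubic" structure, and that as $V\to 0^+$ the hyperbolic set $\Omega_V$ degenerates in a controlled way onto the periodic points on $S_0$; (ii) verify by explicit computation that $\ell_\lambda$ meets $S_0$ transversally for every $\lambda\neq(1,0)$ — this is where the condition $\mathfrak p\neq0$ and the exclusion of $(1,0)$ enter, since $(1,0)$ is exactly the parameter for which $\ell_\lambda$ lies on $S_0$; (iii) propagate transversality from a neighborhood of $V=0$ to all $V>0$ using hyperbolicity: the unstable cone field of $f|_{S_V}$ is mapped strictly into itself, and one shows the image $Df^n(\ell_\lambda'(E))$ enters this unstable cone after finitely many steps uniformly, so it cannot align with the center-stable plane.

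The key new ingredient over the Schr\"odinger case $\mathfrak p=1$ is handling the dependence of the line direction on $\mathfrak p$: the slope vector $\left(\tfrac12,\tfrac1{2\mathfrak p},0\right)$ and the base point $\ell_\lambda(0)=\left(-\tfrac{\mathfrak q}{2},0,\tfrac{1+\mathfrak p^2}{2\mathfrak p}\right)$ now move in a two-parameter family, so I would recast the whole argument as a statement that is open in $(\mathfrak p,\mathfrak q)$ and closed under the relevant limits, then check it on the one-parameter slices where it is already known (namely $\mathfrak p=1$ from \cite{Damanik2014e} and $\mathfrak q=0$ from \cite{Damanik2014e} plus the appendix of \cite{Damanik2010}) and use a connectedness/continuity argument in $\mathcal R\setminus\{(1,0)\}$ to fill in the rest. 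Because $\mathcal R\setminus\{(1,0)\}$ is connected and transversality is an open condition that (via the cocycle reformulation) can only fail on a closed set, it suffices to rule out a first failure, and near any putative first tangency the cone estimates from step (iii) give a contradiction.

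The main obstacle I expect is step (iii) made uniform in $V$ down to $V=0$: the hyperbolic sets $\Omega_V$ are only partially hyperbolic as a family (the center/$V$-direction is neutral), and the constants in the inclination lemma a priori blow up as $V\to0$ because the stable/unstable rates degenerate on the periodic orbits lying in $S_0$. Overcoming this requires either an explicit renormalization of the dynamics near those degenerate periodic points (a "parabolic-like" normal form, as in the analysis of the trace map near the pseudo-integrable locus) or a separate hands-on verification of transversality in a full neighborhood of $V=0$ directly from the quadratic geometry of $S_0$ — which is why pinning down precisely that $(1,0)$ is the unique parameter on $S_0$, and that the intersection $\ell_\lambda\cap S_0$ is transverse and nonempty for all other $\lambda$, is the crux of the whole argument.
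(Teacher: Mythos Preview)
Your high-level strategy matches the paper's: start from the known one-parameter slices ($\mathfrak p=1$ from \cite{Damanik2014e}, $\mathfrak q=0$ by the same arguments combined with \cite{Damanik2010}), use openness of transversality and connectedness of $\mathcal R\setminus\{(1,0)\}$, and rule out a putative first tangency as the parameter moves. Your observation that $\ell_\lambda$ is transversal to every level surface $S_V$ whenever $\lambda$ is off both coordinate axes is also used in the paper, but only to confine the putative tangency point $p$ to the region \emph{away} from $S_0$ (since the center-stable manifolds through $S_0$ are tangent to $S_0$, transversality of $\ell_\lambda$ to $S_0$ forces transversality to those manifolds there). It is not, contrary to your last paragraph, the crux of the argument.

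Where you diverge is the mechanism for ruling out the first tangency, and this is where your proposal has a genuine gap. Your step (iii) --- push the tangent vector $\ell_\lambda'(E)$ forward until it enters the unstable cone, hence it was never in $E^s\oplus E^c$ --- is circular as stated: if the vector \emph{is} in $E^s\oplus E^c$ at $p$, forward iteration keeps it there (this subspace is $Df$-invariant), and nothing forces it into the unstable cone. To get a contradiction at a \emph{first} tangency one needs an argument that distinguishes ``first'' from ``generic'', and cone estimates by themselves do not see the parameter history. The uniformity problem near $V=0$ that you flag is real, but it is not the main obstruction; even granting uniform hyperbolicity, the cone argument does not close.

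The paper's argument is different in character and ultimately spectral rather than dynamical. Away from $S_0$ the center-stable manifolds are analytic, so after an analytic change of coordinates the tangency becomes a zero of some finite order $k\ge 2$ of an analytic function of $E$, depending analytically on $\mathfrak q$. The arguments of \cite[Section~2]{Damanik2014e} (which carry over verbatim, using that all complex intersections of $\ell_\lambda$ with the complexified center-stable lamination are real by self-adjointness of $H_\lambda$) then give a dichotomy: if $k>2$, the $k$ simple real roots that exist for nearby parameters must coalesce through a tangency \emph{before} reaching $\mathfrak q_0$, contradicting minimality; hence $k=2$. But a quadratic tangency forces either an isolated point in the spectrum (impossible since $\Sigma_\lambda$ is a Cantor set) or the closing of a spectral gap. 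The paper rules out gap closing by a separate fact: the two center-stable manifolds bounding any given gap of $\Sigma_{(\mathfrak p_0,0)}$ never meet away from $S_0$ (this is \cite[Theorem~1.3]{Damanik2014e}, whose proof again transfers to the $(\mathfrak p,0)$ slice). This gap-labeling style input is the ingredient your outline is missing entirely.
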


As a consequence, the said geometric restrictions can be lifted for all previous results (many of which are surveyed in \cite{Mei201x}, and some for other models appear in \cite{Damanik2013a,Damanik2013b,Yessen2011}; see also \cite{Damanik2014e}).

\begin{proof}[Proof of Theorem \ref{thm:transversality}]
It is easy to verify that for all $\mathfrak{q}\in\R$, $\ell_{(1,\mathfrak{q})}$ lies entirely in $S_{\frac{\mathfrak{q}^2}{4}}$, and it is known from \cite{Damanik2014e} that in this case $\ell_{(1, \mathfrak{q})}$ intersects the stable lamination on $S_{\frac{\mathfrak{q}^2}{4}}$ transversally. From \eqref{eq:v}, we see that for all $\mathfrak{p}\neq 0$, the line $\ell_{(\mathfrak{p},0)}$ lies on the surface $S_{V_{(\mathfrak{p},0)}}$, where $V_{(\mathfrak{p},0)} = \frac{(\mathfrak{p}-1)^2}{4\mathfrak{p}^2}$. The arguments from \cite[Section~2]{Damanik2014e} apply without modification to show that for all $\mathfrak{p}\neq 0$, $\ell_{(\mathfrak{p},0)}$ intersects the stable lamination on $S_{V_{(\mathfrak{p},0)}}$ transversally. On the other hand, it is known from \cite[Proposition~4.10]{Yessen2011} that the center-stable manifolds intersect the surfaces $\set{S_V}_{V > 0}$ transversally (the intersection of the center-stable manifolds with $S_V$ gives the stable lamination on $S_V$). Since the points in the intersection of $\ell_\lambda$ with the center-stable manifolds
%
% \marginpar{Intersection of $\ell_\lambda$ and the center-stable manifolds?}
%
correspond to the points in the spectrum, and since the spectrum is compact, it follows by continuity that for each fixed $\mathfrak{p}_0\notin \set{0,1}$, there exists $\delta > 0$ such that for all $\mathfrak{q}\in(-\delta, \delta)$, $\ell_{(\mathfrak{p}_0, \mathfrak{q})}$ intersects the center-stable manifolds transversally; similarly, for each fixed $\mathfrak{q}_0 \in \R$, there exists $0 < \delta < 1$ such that for all $\mathfrak{p}\in (1 - \delta, 1 + \delta)$, $\ell_{(\mathfrak{p}, \mathfrak{q}_0)}$ intersects the center-stable manifolds transversally (compare with \cite[Theorem~2.5]{Yessen2011a}).

Let us argue by contradiction that tangencies cannot occur. Fix $\mathfrak{p}_0\notin\set{0,1}$. Let us assume that $\mathfrak{q}_0 > 0$ (respectively, $\mathfrak{q}_0 < 0$) is such that for all $\mathfrak{q}\in[0, \mathfrak{q}_0)$ (respectively, $\mathfrak{q}\in (\mathfrak{q}_0, 0]$), $\ell_{(\mathfrak{p}_0, \mathfrak{q})}$ intersects the center-stable manifolds transversally, while $\ell_{(\mathfrak{p}_0,\mathfrak{q}_0)}$ is tangent to some center-stable manifold. In what follows, let us assume without loss of generality, that $\mathfrak{q}_0 > 0$.

Denote by $p$ a point of tangency between $\ell_{(\mathfrak{p}_0, \mathfrak{q}_0)}$ and a center-stable manifold. We claim that $p\notin S_0$. Indeed, it is easy to see from \eqref{eq:v} that for all $\lambda\in\mathcal{R}$ with $\lambda\neq (1,\mathfrak{q})$ and $\lambda\neq (\mathfrak{p},0)$, $\ell_\lambda$ intersects $S_V$ transversally for every $V$ (differentiate \eqref{eq:v} with respect to $E$ and notice that the result is zero if and only if $\mathfrak{q} = 0$ or $\mathfrak{p}=1$, independently of $E$). On the other hand, if the intersection of $\ell_\lambda$ and the center-stable lamination contains a point $p_0 \in S_0$, then $p_0$ must lie on the strong stable manifold of one of the singularities of $S_0$ (see \cite[Lemma~2.2]{Damanik2013a}, and use equations \eqref{eq:line} and \eqref{eq:invariant} to see that $p_0 \notin [-1, 1]^3 \subset \R^3$); on the other hand, those center-stable manifolds that intersect $S_0$ along the aforementioned strong-stable manifolds are tangent to $S_0$ (and this tangency is quadratic; see \cite[Lemma~4.8]{Mei201x} for the details); combined with the fact that $\ell_\lambda$ is transversal to $S_0$, we see that $\ell_{\lambda}$ is transversal to the center-stable manifold at $p_0$.

Now, we finish the proof by ruling out the possibility $p\notin S_0$.

It is known that for all $\lambda\in\mathcal{R}$ with $\lambda\neq (1,0)$, tangential intersections of $\ell_\lambda$ with the center-stable manifolds away from $S_0$ are isolated, if they exist (see the proof of \cite[Theorem~2.3(i)]{Yessen2011a}). Let $U$ be a small open neighborhood of $p$ such that $p$ is the only tangency in $U$, and $U\cap S_0 = \emptyset$.

It can be shown that the center-stable manifolds are analytic away from $S_0$ (see \cite[Section~2]{Buzzard2001} for a discussion). Let $W(p)$ denote the (part of the) center-stable manifold in $U$ containing $p$. Perform an analytic change of coordinates to map $p$ to the origin in $\R^3$, and $W(p)\cap U$ to a part of the $xy$ plane. Let us call this change of coordinates $\Phi$. Then $g: E\mapsto \pi_z\circ\Phi\circ \ell_{(\mathfrak{p_0}, \mathfrak{q})}(E)$, $\mathfrak{q}\in [0, \mathfrak{q}_0]$ where $\pi_z$ is the projection onto the $z$ coordinate, is analytic and depends analytically on $\mathfrak{q}$. Moreover, if $E_p$ is such that $g(E_p) = \Phi(p)$, then $g'(E_p) = 0$ and there exists $\epsilon > 0$ such that for all $E \in (E_p - \epsilon, E_p + \epsilon)$, with $E\neq E_p$, $g'(E)\neq 0$. The arguments from \cite[Section 2]{Damanik2014e} apply without modification to guarantee that either the tangency at $p$ is quadratic (i.e. $E_p$ is a root of $g$ of order $2$), or that $E_p$ is a root of $g$ of order $k > 2$ and for all $\mathfrak{q}\in[0, q_0)$, $g$ has precisely $k$ roots $E_1, \dots, E_k$ with $g'(E_j)\neq 0$ for all $j = 1, \dots, k$. Moreover, these roots approach the origin as $\mathfrak{q}$ approaches $\mathfrak{q}_0$. Combined with the fact that the center-stable manifolds vary continuously in the $C^2$ topology (see \cite[Section B.1.2]{Yessen2011}), the arguments from \cite[Section 2]{Damanik2014e} again apply without modification to guarantee a tangency of $\ell_{(\mathfrak{p}_0, \mathfrak{q})}$ with a center-stable manifold in $U$ for some $\mathfrak{q}\in (0, \mathfrak{q}_0)$, which contradicts the assumption that no tangencies occur for all $\mathfrak{q}\in [0, \mathfrak{q}_0)$. Thus the tangency at $p$ must be quadratic. On the other hand, the arguments in \cite[Section 2]{Damanik2014e} guarantee that the tangency at $p$ cannot be quadratic, as this would lead to either an isolated point in the spectrum (which is precluded by the fact that the spectrum is known to be a Cantor set; see \cite{Yessen2011a}), or to closure of a gap in the spectrum, which is precluded by Claim \ref{claim:trans} below (which states that no two center-stable manifolds that mark the endpoints of the same gap can intersect away from $S_0$). Thus $p$ cannot be a point of tangency. This completes the proof; let us only remark that in the application of \cite[Section 2]{Damanik2014e}, one needs to know that all the intersections between the complexified $\ell_\lambda$ and the complexified center-stable manifolds are real. This follows, just like in \cite{Damanik2014e}, from an application of the complexified version of S\"ut\H{o}'s arguments (see \cite{Yessen2011a}) which guarantees that if $E \in \C$ is such that $\ell_\lambda(E)$ has a bounded forward orbit (and it does provided that $\ell_\lambda(E)$ is a point on a (complexified) center-stable manifold), then $E$ is an element of the spectrum; on the other hand, since $H_\lambda$ is self-adjoint, its spectrum is real.

\begin{claim}\label{claim:trans}
For every gap $G$ of the Cantor set given by the intersection of $\ell_{(\mathfrak{p}_0, 0)}$ with the center-stable manifolds inside $U$, there exist center-stable manifolds $W_1$ and $W_2$ such that the endpoints of $G$ are given by the intersection of $\ell_{(\mathfrak{p}_0, 0)}$ with $W_1$ and $W_2$, and $W_1\cap W_2 = \emptyset$ away from the surface $S_0$.

\end{claim}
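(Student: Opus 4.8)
The plan is to argue by contradiction, supposing that the two center--stable manifolds $W_1, W_2$ marking the endpoints of a gap $G$ do intersect at a point $p$ lying off $S_0$. The key structural fact we exploit is that the gap $G$ of the Cantor set $\ell_{(\mathfrak p_0,0)}\cap(\text{center--stable lamination})$ corresponds, via the S\"ut\H o-type characterization recalled in Section~\ref{sec:background}, to an open interval of energies $E$ whose $\ell_{(\mathfrak p_0,0)}$-image has unbounded forward trace--map orbit; the endpoints of $G$ are energies $E_1 < E_2$ for which the orbit stays bounded (they are in $\Sigma_{(\mathfrak p_0,0)}$), lying respectively on $W_1$ and $W_2$. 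Since $W_1$ and $W_2$ are both center--stable manifolds of the partially hyperbolic set $\bigcup_{V>0}\Omega_V$, and on each surface $S_V$ the map $f|_{S_V}$ is Axiom~A, distinct stable manifolds within a single $S_V$ are disjoint; thus any intersection $p\in W_1\cap W_2$ must occur between leaves living on \emph{different} level surfaces $S_{V_1}$ and $S_{V_2}$ — but $\ell_{(\mathfrak p_0,0)}$ itself lies entirely in a single surface $S_{V_{(\mathfrak p_0,0)}}$, which already constrains the picture.

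The concrete steps I would carry out are: (1) Recall from \cite[Proposition~4.10]{Yessen2011} and the surrounding discussion that away from $S_0$ the center--stable manifolds are two--dimensional, analytic (by \cite{Buzzard2001}), transversal to every $S_V$, and that each meets a fixed $S_V$ in a single stable leaf of $f|_{S_V}$. (2) Observe that if $p\in W_1\cap W_2$ with $p\notin S_0$, then $p\in S_V$ for $V=I(p)>0$, and $W_1\cap S_V$, $W_2\cap S_V$ are stable leaves of $f|_{S_V}$ through the common point $p$; since stable leaves of an Axiom~A diffeomorphism through a common point coincide, $W_1$ and $W_2$ agree on $S_V$, hence (by analyticity and the fact that a center--stable manifold is swept out by varying $V$ along these leaves) $W_1=W_2$ as center--stable manifolds. (3) Derive the contradiction: if $W_1=W_2$, then the two endpoints $E_1,E_2$ of the gap $G$ lie on the \emph{same} center--stable manifold, i.e.\ $\ell_{(\mathfrak p_0,0)}$ meets this single leaf in (at least) two points; but since the gap $G$ is, by definition, an open interval in the complement of $\ell_{(\mathfrak p_0,0)}\cap(\text{lamination})$ whose two endpoints are distinct intersection points with that one leaf, the segment of $\ell_{(\mathfrak p_0,0)}$ between them together with the leaf bounds a picture forcing either a tangency (already ruled out in $U$, since $p$ was to be the only tangency and here we would produce one along $\ell_{(\mathfrak p_0,0)}$ between $E_1$ and $E_2$) or an extra intersection point inside the gap, contradicting that $G$ is a gap.

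For the last step one should be careful: the cleanest route is to use the interval structure directly. The set $\ell_{(\mathfrak p_0,0)}\cap W$ for a single analytic leaf $W$ inside the small neighborhood $U$ is the zero set of a real--analytic function $h(E)=\pi_z\circ\Phi\circ\ell_{(\mathfrak p_0,0)}(E)$ (exactly the function $g$ built in the proof of Theorem~\ref{thm:transversality}, specialized to $\mathfrak q=0$ and this leaf), so if $W_1=W_2=W$ then $h(E_1)=h(E_2)=0$ with $E_1<E_2$; since $\ell_{(\mathfrak p_0,0)}$ is transversal to $W$ at each of these points (we are off $S_0$, and transversality on the $\mathfrak q=0$ line is known from \cite{Damanik2014e} combined with the appendix of \cite{Damanik2010}), $h$ has simple zeros at $E_1$ and $E_2$, hence changes sign, hence has a zero in $(E_1,E_2)$ — an energy in $\Sigma_{(\mathfrak p_0,0)}$ strictly inside the gap, the desired contradiction. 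I expect the main obstacle to be bookkeeping rather than a deep difficulty: namely, verifying cleanly that a single center--stable manifold really does meet a fixed $S_V$ in a single stable leaf (so that $p\in W_1\cap W_2$ off $S_0$ genuinely forces $W_1=W_2$, not merely that they share one leaf on one surface), and handling the possibility that $W_1$ and $W_2$ cross $S_0$ between $E_1$ and $E_2$ so that the "single leaf" argument must be localized to $U$ where $U\cap S_0=\emptyset$. Both are addressed by the transversality and analyticity facts already in place, but they are the points that need care.
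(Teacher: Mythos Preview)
Your reduction in steps (1)--(4) is essentially sound: away from $S_0$ the center--stable manifolds form a genuine lamination, so two leaves that share a point off $S_0$ must coincide. Thus the claim boils down to showing that the two endpoints of a gap cannot lie on the \emph{same} center--stable leaf, i.e.\ $W_1\neq W_2$. This is also exactly how the paper views it: its proof is a one--line citation to \cite[Theorem~1.3]{Damanik2014e}, whose content (for the line $\ell_{(1,\mathfrak q)}$, transported verbatim to $\ell_{(\mathfrak p_0,0)}$) is precisely that gap--bounding leaves are distinct.

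The problem is your step (5). The inference ``$h$ has simple zeros at $E_1$ and $E_2$, hence changes sign, hence has a zero in $(E_1,E_2)$'' is false. A simple zero forces a sign change \emph{across} each of $E_1$ and $E_2$, not a zero \emph{between} them; $h(E)=(E-E_1)(E-E_2)$ is a counterexample. Geometrically, nothing in your argument prevents the line $\ell_{(\mathfrak p_0,0)}\subset S_{V_{(\mathfrak p_0,0)}}$ from crossing a single stable leaf $W\cap S_{V_{(\mathfrak p_0,0)}}$ transversally twice, with the arc in between lying entirely on one side of $W$. To rule this out you would need an additional ingredient --- e.g.\ that leaves of the stable lamination accumulate on $W$ from that side and a Jordan--curve type argument forcing $\ell$ to cross one of them, or the gap--labelling\slash gap--opening machinery --- none of which follows from the sign of $h$ alone. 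That additional ingredient is exactly what \cite[Theorem~1.3]{Damanik2014e} supplies, so contrary to your expectation this is not ``bookkeeping'': you have correctly isolated the crux but not proved it.
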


\begin{proof}
This follows from \cite[Theorem 1.3]{Damanik2014e} (the theorem is stated for the Fibonacci Hamiltonian of the form $H_{(1, \mathfrak{q})}$, but the proof applies to the case $(\mathfrak{p}_0, 0)$ without modification).
\end{proof}
\end{proof}

The notions of thickness of a Cantor set, as well as the notions of (local) Hausdorff and (upper) box-counting dimensions will play a crucial role in the rest of the paper; these notions are discussed in detail in \cite[Chapter~4]{Palis1993}, for example. The following notation will be used throughout the remainder of the paper.

\begin{itemize}

\item The Hausdorff dimension of a set $A$: $\hdim(A)$.
\item The box-counting dimension of a set $A$: $\bdim(A)$; the upper box-counting dimension of $A$ will be denoted by $\overline{\bdim}(A)$.
\item The thickness of a set $A$: $\tau(A)$.
\item For $\star \in \set{\hdim, \tau}$, the local $\star$ of a set $A$ at $a\in A$ is denoted by $\star^{\loc}(A, a)$.

\end{itemize}

We are now ready to prove Theorems \ref{thm:thm2}, \ref{thm:thm1}, and \ref{thm:thm3}.

\subsection{Proof of Theorem \ref{thm:thm2}}

Given $\lambda = (\mathfrak{p}, \mathfrak{q}) \in \mathcal{R}$, denote by $\Pi_{\lambda}$ the plane $\set{(x,y,z) \in \R^3 : z = \frac{1+\mathfrak{p}^2}{2\mathfrak{p}}}$; note that $\Pi_\lambda$ only depends on $\mathfrak{p}$. Clearly, $\ell_{\lambda}$ is contained in $\Pi_{\lambda}$ for all $\lambda\in\mathcal{R}$. Set $y = y(E) = \frac{E}{2\mathfrak{p}}$ and $x = x(E) = \frac{E - \mathfrak{q}}{2}$. Then we have
\begin{align}\label{eq:app-1}
y = y(x) \eqdef \frac{1}{\mathfrak{p}}x + \frac{\mathfrak{q}}{2\mathfrak{p}},
\end{align}
and
\begin{align*}
\ell_{(\mathfrak{p},\mathfrak{q})}(E) = \left(x, y(x), \frac{1+\mathfrak{p}^2}{2\mathfrak{p}}\right).
\end{align*}

Thus, for any fixed $\lambda\in\mathcal{R}$, $\ell_{\lambda}$ can be viewed as a line in $\Pi_{\lambda}$ with parameterization $(x, y(x))$, $x\in\R$.

\begin{lem}\label{lem:app-1}
For all $\Delta > 0$ there exists $\mathfrak{p}$ with $\abs{\mathfrak{p}} > \Delta$ and $\mathfrak{q}$ depending on $\mathfrak{p}$, such that there exists $E\in\Sigma_{(\mathfrak{p},\mathfrak{q})}$ with $\ell_{(\mathfrak{p},\mathfrak{q})}(E)\in S_0$.
\end{lem}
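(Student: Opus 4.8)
The plan is to produce, for each $\Delta>0$, a large $\mathfrak{p}$ (say $\mathfrak{p}>\max(\Delta,\sqrt5)$) and a value of $\mathfrak{q}$ for which the unique energy $E$ with $\ell_{(\mathfrak{p},\mathfrak{q})}(E)\in S_0$ happens to equal $\sup\Sigma_{(\mathfrak{p},\mathfrak{q})}$; since the spectrum is closed this $E$ then lies in $\Sigma_{(\mathfrak{p},\mathfrak{q})}$, which is exactly what is wanted. First, a direct computation from \eqref{eq:line} and \eqref{eq:invariant} gives
\[
I\big(\ell_{(\mathfrak{p},\mathfrak{q})}(E)\big)=\frac{(\mathfrak{p}^2-1)^2+\mathfrak{p}^2\mathfrak{q}^2-\mathfrak{q}(\mathfrak{p}^2-1)E}{4\mathfrak{p}^2},
\]
so for $\mathfrak{p}\notin\{0,\pm1\}$ and $\mathfrak{q}\neq0$ there is precisely one energy $E_0=E_0(\mathfrak{p},\mathfrak{q})$ with $\ell_{(\mathfrak{p},\mathfrak{q})}(E_0)\in S_0$, namely $E_0(\mathfrak{p},\mathfrak{q})=\tfrac{\mathfrak{p}^2-1}{\mathfrak{q}}+\tfrac{\mathfrak{p}^2\mathfrak{q}}{\mathfrak{p}^2-1}$. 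For $\mathfrak{p}>1$ and $\mathfrak{q}>0$ the arithmetic-geometric mean inequality gives $E_0(\mathfrak{p},\mathfrak{q})\geq2\mathfrak{p}$, with equality exactly when $\mathfrak{q}=\mathfrak{q}^\ast:=\mathfrak{p}-\mathfrak{p}^{-1}$.

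The crucial step is a variational lower bound for $\sup\Sigma_{(\mathfrak{p},\mathfrak{q}^\ast)}$. Since $\alpha>\tfrac12$, the symbol $1$ occurs in the Sturmian sequence in blocks of length $1$ or $2$ separated by single $0$'s, so the sequence contains the factor $011$; i.e.\ there are consecutive sites $n,n+1,n+2$ with $\omega_n=0$, $\omega_{n+1}=\omega_{n+2}=1$. Compressing $H_{(\mathfrak{p},\mathfrak{q})}$ to $\ell^2(\{n,n+1,n+2\})$ yields the matrix $M=\left(\begin{smallmatrix}0&\mathfrak{p}&0\\\mathfrak{p}&\mathfrak{q}&\mathfrak{p}\\0&\mathfrak{p}&\mathfrak{q}\end{smallmatrix}\right)$, whose top eigenvalue is at most $\sup\Sigma_{(\mathfrak{p},\mathfrak{q})}$ by the min-max principle; evaluating the Rayleigh quotient of $M$ on the vector $(1,2,1)$ gives $\sup\Sigma_{(\mathfrak{p},\mathfrak{q})}\geq\tfrac16(8\mathfrak{p}+5\mathfrak{q})$. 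At $\mathfrak{q}=\mathfrak{q}^\ast$ this reads $\sup\Sigma_{(\mathfrak{p},\mathfrak{q}^\ast)}\geq\tfrac16(13\mathfrak{p}-5\mathfrak{p}^{-1})$, which is strictly larger than $2\mathfrak{p}=E_0(\mathfrak{p},\mathfrak{q}^\ast)$ as soon as $\mathfrak{p}^2>5$. (It is essential that the block be $011$ rather than $110$ or $010$: both bonds of $011$ carry the large hopping $\mathfrak{p}$, whereas the cruder $2\times2$ block on the pair $11$ only gives $\sup\Sigma\geq2\mathfrak{p}-\mathfrak{p}^{-1}$, which does not clear $2\mathfrak{p}$.)

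Finally I would run an intermediate value argument. Fix $\mathfrak{p}>\max(\Delta,\sqrt5)$ and set $\psi(\mathfrak{q}):=E_0(\mathfrak{p},\mathfrak{q})-\sup\Sigma_{(\mathfrak{p},\mathfrak{q})}$ for $\mathfrak{q}\in[\mathfrak{q}^\ast,\infty)$; this is continuous, since $\|H_{(\mathfrak{p},\mathfrak{q})}-H_{(\mathfrak{p},\mathfrak{q}')}\|=|\mathfrak{q}-\mathfrak{q}'|$ makes $\mathfrak{q}\mapsto\sup\Sigma_{(\mathfrak{p},\mathfrak{q})}$ $1$-Lipschitz. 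By the previous step $\psi(\mathfrak{q}^\ast)<0$, while $\psi(\mathfrak{q})\to+\infty$ as $\mathfrak{q}\to\infty$: indeed $E_0(\mathfrak{p},\mathfrak{q})\sim\tfrac{\mathfrak{p}^2}{\mathfrak{p}^2-1}\mathfrak{q}$ has slope strictly larger than $1$ whereas $\sup\Sigma_{(\mathfrak{p},\mathfrak{q})}\leq\|H_{(\mathfrak{p},\mathfrak{q})}\|\leq\mathfrak{q}+2\mathfrak{p}$ has slope $1$, so $\psi(\mathfrak{q})\geq\tfrac{\mathfrak{p}^2-1}{\mathfrak{q}}+\tfrac{\mathfrak{q}}{\mathfrak{p}^2-1}-2\mathfrak{p}\to\infty$. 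Hence there is $\mathfrak{q}_1>\mathfrak{q}^\ast$ with $E_0(\mathfrak{p},\mathfrak{q}_1)=\sup\Sigma_{(\mathfrak{p},\mathfrak{q}_1)}\in\Sigma_{(\mathfrak{p},\mathfrak{q}_1)}$, and $\ell_{(\mathfrak{p},\mathfrak{q}_1)}(E_0(\mathfrak{p},\mathfrak{q}_1))\in S_0$ by construction; since $\mathfrak{p}>\Delta$, this is the desired pair $(\mathfrak{p},\mathfrak{q})$. The main obstacle is the second paragraph: one must certify that at the extremal coupling $\mathfrak{q}^\ast$ the top of the spectrum has already overshot the value $2\mathfrak{p}$ at which $\ell$ crosses $S_0$, and this is exactly where the combinatorial structure of the Sturmian potential (the presence of the $011$ block carrying $\mathfrak{p}$ on both bonds) has to be used.
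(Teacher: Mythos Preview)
Your argument is correct, and it takes a genuinely different route from the paper's proof.

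The paper argues dynamically: it considers the fixed point $P_1=(1,1,1)\in S_0$, notes that one branch $W$ of its strong-stable manifold escapes to infinity in every coordinate under $f^{-1}$ (citing results of Cantat and Roberts), and hence contains points with arbitrarily large $z$-coordinate. Since the plane $\Pi_\lambda=\{z=\tfrac{1+\mathfrak p^2}{2\mathfrak p}\}$ depends only on $\mathfrak p$, one can choose $|\mathfrak p|>\Delta$ so that $\Pi_\lambda$ meets $W$, and then tune $\mathfrak q$ so that the line $\ell_{(\mathfrak p,\mathfrak q)}$ passes through that intersection point. That point lies on the stable manifold of $P_1$, hence has bounded forward orbit, hence corresponds to a spectral energy.

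Your approach avoids the dynamics on $S_0$ entirely and is more elementary and self-contained: you compute the unique energy $E_0(\mathfrak p,\mathfrak q)$ at which $\ell$ meets $S_0$, bound $\sup\Sigma_{(\mathfrak p,\mathfrak q)}$ from below by a Rayleigh-quotient test on the $3\times 3$ compression over an $011$ block (and from above by the operator norm), and run an intermediate value argument in $\mathfrak q$ to force $E_0=\sup\Sigma$. This has the pleasant bonus of placing the intersection with $S_0$ exactly at an extremum of the spectrum, which is something the paper only obtains later (see Remark~\ref{rem:extremum}). The paper's approach, by contrast, yields slightly more structural information directly: the intersection point is known to lie on the strong-stable manifold $W$ of the specific singularity $P_1$, which is what the proof of Lemma~\ref{lem:app-3} invokes. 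Your proof only gives $\ell_\lambda(E_0)\in S_0$; however, since any spectral energy mapping into $S_0$ must lie on the strong-stable manifold of \emph{some} singularity of $S_0$ (see the proof of Theorem~\ref{thm:transversality}), and since the relevant features of Lemma~\ref{lem:app-3} hold at each singularity, this does not create a downstream problem.
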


\begin{proof}
Let us denote the point $(1,1,1) \in S_0$ by $P_1$; $P_1$ is a partially hyperbolic fixed point with one-dimensional stable, unstable, and center directions ($Df$ acts as an isometry on the center subspace). Obviously, since the surfaces $S_V$ are invariant, the strong-stable manifold of $P_1$ lies on $S_0$. Notice that $P_1$ is a cut point of its strong-stable manifold; after removing $P_1$ from the strong-stable manifold, we end up with one of the two branches, which we denote by $W$, which satisfies the following. For all $p\in W$, $\abs{f^{-n}(p)}\underset{n\rightarrow\infty}{\longrightarrow}\-\infty$ (see \cite[Section~5]{Cantat2009}; in the terminology of \cite{Cantat2009}, $P_1$ is an \emph{s-one-sided} or \emph{stably one-sided point}). Consequently, every $p\in W$ escapes to infinity in every coordinate under $f^{-1}$ (see Section 3 in \cite{Roberts1996}). Since $W = f^{-1}(W)$, it follows that there exist points of $W$ with arbitrarily large $z$ coordinate in absolute value. It follows by continuity (indeed, $W$ is smooth), that for any arbitrarily large $\Delta > 0$ there exists $\mathfrak{p}$ with $\abs{\mathfrak{p}}>\Delta$ such that $\Pi_{(\mathfrak{p},\mathfrak{q})}$ intersects $W$; pick such a $\mathfrak{p}$. Now from \eqref{eq:app-1}, we see that for any such choice of $\mathfrak{p}$ we can find $\mathfrak{q}$ such that $\ell_{(\mathfrak{p},\mathfrak{q})}$ intersects $W$. Indeed, $\Pi_\lambda$ is determined only by $\mathfrak{p}$, and the slope of the line $\ell_\lambda$ when viewed as a line in $\Pi_\lambda$ in terms of \eqref{eq:app-1} is also determined only by $\mathfrak{p}$. It follows that for any point $p$ in $\Pi_\lambda$, we can choose $\mathfrak{q}$ such that the line in \eqref{eq:app-1} passes through $p$.
\end{proof}

\begin{lem}\label{lem:app-3}
For all $\Delta > 1$ and $\tau_0 > 0$ there exists $(\mathfrak{p},\mathfrak{q})\in\mathcal{R}$ with $\abs{\mathfrak{p}}> \Delta$, and $\epsilon > 0$, such that for all $\lambda\in\mathcal{R}$ with $\norm{(\mathfrak{p},\mathfrak{q})-\lambda}< \epsilon$, there exists $E\in\Sigma_{\lambda}$ with $\tau^\loc(\Sigma_{\lambda}, E) > \tau_0$.
\end{lem}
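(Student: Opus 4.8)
The plan is to combine the existence of a spectral parameter landing on $S_0$ (Lemma \ref{lem:app-1}) with the fact that, near the singularity $P_1 = (1,1,1)$, the hyperbolic dynamics of $f|_{S_V}$ forces arbitrarily large local thickness of the associated Cantor set, and then transfer this to $\Sigma_\lambda$ via the parameterization $\ell_\lambda$ and the transversality statement of Theorem \ref{thm:transversality}. Concretely, fix $\Delta > 1$ and $\tau_0 > 0$. First I would invoke Lemma \ref{lem:app-1} to obtain $\mathfrak p_*$ with $\abs{\mathfrak p_*} > \Delta$ and $\mathfrak q_*$ (depending on $\mathfrak p_*$) and a point $E_* \in \Sigma_{(\mathfrak p_*,\mathfrak q_*)}$ with $p_* := \ell_{(\mathfrak p_*,\mathfrak q_*)}(E_*) \in W \subset S_0$, where $W$ is the branch of the strong-stable manifold of $P_1$ described there. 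By construction $p_*$ lies on the strong-stable manifold of $P_1$, so $f^n(p_*) \to P_1$ as $n \to \infty$.

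The key mechanism is that the local thickness of $\Sigma_\lambda$ at a point whose orbit approaches $P_1$ is unbounded. I would argue this as follows: on a nearby surface $S_V$ with $V>0$ small, the nonwandering set $\Omega_V$ is a hyperbolic set whose stable lamination, intersected with a transversal line, is a Cantor set whose thickness blows up near the "parabolic-like" region corresponding to $P_1$ — because the center direction of $P_1$ (on which $Df$ is an isometry) makes contraction rates along the stable foliation degenerate to $1$ there, forcing gap-to-bridge ratios to vanish, hence thickness $\to \infty$. More carefully, I would use that the center-stable manifolds near $W$ accumulate on the center-stable manifold of $P_1$ and that, since $p_* \in W$ escapes to infinity in every coordinate under $f^{-1}$ (and $W = f^{-1}(W)$), the orbit $\{f^{-n}(p_*)\}$ carries the local Cantor structure of $\ell_\lambda \cap (\text{center-stable lamination})$ to ever-larger scales while the number of "visible" nearby leaves grows. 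Equivalently, passing to the line parameterization $(x, y(x))$ in $\Pi_\lambda$ from \eqref{eq:app-1}, I would show that near $E_*$ the set $\Sigma_{(\mathfrak p_*,\mathfrak q_*)}$ contains a Cantor piece arising as an intersection of a transversal line with the stable lamination of a hyperbolic set whose leaves near $P_1$ have arbitrarily weak transverse contraction; by the standard thickness estimates in \cite[Chapter~4]{Palis1993} (or the thickness formula for $C^{1+\alpha}$ dynamically defined Cantor sets), this yields $\tau^\loc(\Sigma_{(\mathfrak p_*,\mathfrak q_*)}, E_*) = \infty$, in particular $> \tau_0$.

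Having established $\tau^\loc(\Sigma_{(\mathfrak p_*,\mathfrak q_*)}, E_*) > \tau_0$ (or $= \infty$), the remaining step is the openness in $\lambda$. Here I would use that local thickness is lower semicontinuous under $C^1$ (indeed $C^{1+\alpha}$) perturbations of dynamically defined Cantor sets, together with the fact that $\ell_\lambda$ and the center-stable manifolds vary continuously — the former trivially (affine in $E$, coefficients smooth in $(\mathfrak p,\mathfrak q)$) and the latter in the $C^2$ topology, as recorded in \cite[Section~B.1.2]{Yessen2011} — and that by Theorem \ref{thm:transversality} the intersection $\ell_\lambda \cap (\text{center-stable lamination})$ remains transversal for $\lambda$ near $(\mathfrak p_*,\mathfrak q_*)$ (note $(\mathfrak p_*,\mathfrak q_*) \neq (1,0)$ since $\abs{\mathfrak p_*} > \Delta > 1$). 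Transversality ensures the intersection continues to be a dynamically defined Cantor set depending continuously on $\lambda$, so its local thickness near the continuation of $E_*$ stays $> \tau_0$ on a neighborhood; choosing $\epsilon > 0$ to be the radius of that neighborhood (intersected with $\{\abs{\mathfrak p} > \Delta\}$, which is open and contains $(\mathfrak p_*,\mathfrak q_*)$) completes the proof.

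The main obstacle I expect is the second paragraph: rigorously extracting the blow-up of local thickness from the partially-hyperbolic behavior at $P_1$. One must handle the fact that $P_1$ lies on $S_0$, where $f$ is merely partially (not uniformly) hyperbolic and the center-stable manifolds are tangent to $S_0$ with quadratic tangency (cf.\ \cite[Lemma~4.8]{Mei201x}); the thickness argument therefore has to be run on the surfaces $S_V$ with $V > 0$ and then passed to the limit, controlling uniformity of the $C^{1+\alpha}$ bounds along the way, or else phrased intrinsically on the center-stable lamination using the distortion estimates already present in \cite{Yessen2011a}. This is exactly the kind of estimate that underlies the multifractal statement \cite[Theorem~2.3]{Yessen2011a}, so I would expect to quote or adapt those bounds rather than redo them.
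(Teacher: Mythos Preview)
Your high-level plan matches the paper's: invoke Lemma~\ref{lem:app-1} to land on the strong-stable branch $W$ of $P_1$, exploit that the stable-lamination Cantor sets on $S_V$ have thickness $\to \infty$ as $V\downarrow 0$, and conclude by stability in $\lambda$. But the paper executes this differently at exactly the point you flag as the obstacle, and your version of that step is a genuine gap.

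You try to prove $\tau^{\loc}(\Sigma_{(\mathfrak p_*,\mathfrak q_*)},E_*)=\infty$ at the parameter where $\ell_\lambda(E_*)\in S_0$, and then invoke lower semicontinuity of local thickness to propagate to nearby $\lambda$. Neither ingredient is available as stated. At $E_*$ the line meets the center-stable manifold of $P_1$, which is \emph{tangent} to $S_0$ (quadratic tangency, \cite[Lemma~4.8]{Mei201x}); the local Cantor structure of $\Sigma_\lambda$ near $E_*$ is not a single dynamically defined Cantor set on one hyperbolic surface but sweeps through a one-parameter family $\{S_V\}$ with $V$ touching $0$, so the standard $C^{1+\alpha}$ thickness formulas from \cite[Chapter~4]{Palis1993} do not apply directly, and the multifractal results of \cite[Theorem~2.3]{Yessen2011a} give dimension, not thickness. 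And ``lower semicontinuity of local thickness under $C^1$ perturbation'' is not a black-box result you can cite here.

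The paper sidesteps both issues by \emph{first perturbing the parameter}: it replaces $(\mathfrak p_0,\mathfrak q_0)$ by $(\mathfrak p_0,\mathfrak q_0+\delta)$ so that the intersection with $W^{cs}(P_1)$ moves onto $S_{V_\delta}$ with $V_\delta>0$ small, and then works entirely in the uniformly hyperbolic regime $V>0$. It then carries out an explicit geometric comparison (choosing a plane $\Lambda\supset\ell_{(\mathfrak p_0,\mathfrak q_0+\delta)}$ transversal to the $S_V$'s and to the center-stable leaves, rectifying the $\{S_V\}$-foliation, and estimating band/gap ratios on $\ell$ against those on the leaf $L_{V_k^l}\subset S_{V_k^l}$ via elementary trigonometry, cf.\ \eqref{eq:affinity} and the $\sin(\eta)\sin(\tilde\theta)/\sin(\tilde\eta)\sin(\theta)>1/2$ bound). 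This reduces the thickness on $\ell$ to a universal constant times the thickness on $S_V$, and the latter is known to blow up as $V\to 0$ by \cite{Damanik2010}. Because the constants $C_1,C_2$ and the transversality angles are uniform in $\delta$ and in small perturbations of $(\mathfrak p_0,\mathfrak q_0+\delta)$, the $\epsilon$-neighborhood in $\lambda$ comes for free from the same estimate rather than from an abstract semicontinuity principle. In short: the missing idea in your proposal is to move off $S_0$ at the level of parameters before attempting any thickness estimate.
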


\begin{proof}
It is known that the center-stable manifold that contains $P_1$ is tangent to $S_0$; its intersection with $S_0$ is precisely the strong-stable manifold of $P_1$ (see \cite[Lemma~4.8]{Mei201x}). Let us denote this center-stable manifold by $W^{cs}(P_1)$.

Now pick $\mathfrak{p}_0$ with $\abs{\mathfrak{p}_0} > \Delta$ and a suitable $\mathfrak{q}_0$ (guaranteed by Lemma \ref{lem:app-1}), such that the corresponding $\ell_{(\mathfrak{p}_0, \mathfrak{q}_0)}$ intersects $W^{cs}(P_1)$ at some point along $W$. Notice that $\mathfrak{q}_0\neq 0$, since $\ell_{(\mathfrak{p}_0,0)}$ lies entirely in the surface $S_V$ with $V = \frac{(\mathfrak{p}_0^2-1)^2}{4\mathfrak{p}_0^2}$ (see equation \eqref{eq:v} below). Observe that for all $\delta > 0$ or $\delta < 0$ with $\abs{\delta}$ sufficiently small, $\ell_{(\mathfrak{p}_0, \mathfrak{q}_0+\delta)}$ still intersects $W^{cs}(P_1)$ in some point $\ell_{(\mathfrak{p}_0, \mathfrak{q}_0+\delta)}(E_\delta)$, $E_\delta \in \Sigma_{(\mathfrak{p}_0, \mathfrak{q}_0+\delta)}$, but this intersection no longer occurs on $S_0$, but on some $S_{V_\delta}$ with $V_\delta > 0$ and $V_\delta \rightarrow 0$ as $\delta\rightarrow 0$. Indeed, since $W$ is analytic, if for infinitely many small $\delta > 0$, $\ell_{(\mathfrak{p}_0, \mathfrak{q}_0 + \delta)}(E_\delta)\in S_0$, then we must have $W$ lying entirely in $\Pi_{(\mathfrak{p}_0, \mathfrak{q}_0 + \delta)}$ (which, recall, depends only on $\mathfrak{p}_0$), which is impossible since $\Pi_{(\mathfrak{p}_0, \mathfrak{q}_0 + \delta)}$ does not contain $P_1$, while $W$ contains points arbitrarily close to $P_1$.

In what follows, let us assume, without loss of generality, that $\delta > 0$. Moreover, let us remark, for future reference, that
\begin{align}\label{eq:text}
V_\delta
=
\min \set{ I ( \ell_{ ( \p_0,\q_0 + \delta ) }(E) ) : E \in \Sigma_{ ( \p_0, \q_0 + \delta ) } }.
\end{align}
Indeed, this follows by monotonicity (see \eqref{eq:v} below).

By continuity, the intersection of $\ell_{(\mathfrak{p}_0, \mathfrak{q}_0+\delta)}$ with $W^{cs}(P_1)$ at $\ell_{(\mathfrak{p}_0, \mathfrak{q}_0+\delta)}(E_\delta)$ is transversal uniformly in $\delta$ for all sufficiently small $\delta >0$. Thus, in a sufficiently small neighborhood of $\ell_{(\mathfrak{p}_0, \mathfrak{q}_0+\delta)}(E_\delta)$, $\ell_{(\mathfrak{p}_0, \mathfrak{q}_0+\delta)}$ intersects the center-stable manifolds uniformly transversally (since the center-stable manifolds form a continuous family in the $C^1$ topology away from $S_0$; see \cite[Proposition~4.10]{Yessen2011} for the details). Let us take such a neighborhood of $\ell_{(\mathfrak{p}_0, \mathfrak{q}_0+\delta)}(E_\delta)$ and denote it by $\mathcal{V}_\delta$. Pick a plane $\Lambda$ containing $\ell_{(\mathfrak{p}_0, \mathfrak{q}_0+\delta)}$ and transversal to the surfaces $S_V$, $V \geq 0$, as well as to the center-stable manifolds, inside the neighborhood $\mathcal{V}_\delta$. Observe that the intersection of $\Lambda\cap\mathcal{V}_\delta$ with the surfaces $\set{S_V}_{V\geq 0}$ produces a smooth foliation of $\Lambda\cap\mathcal{V}_\delta$. We shall denote this foliation by $\mathcal{S}$. Let us further assume that this foliation has been rectified (notice that after the rectification, $\ell_{(\mathfrak{p}_0, \mathfrak{q}_0+\delta)}$ is in general no longer a line, but we abuse the notation and use the same symbol $\ell_{(\mathfrak{p}_0, \mathfrak{q}_0+\delta)}$). The intersection of the center-stable manifolds with $\Lambda\cap\mathcal{V}_\delta$ produces a lamination with smooth leaves. We shall call this lamination $\mathcal{L}$.

Let us parameterize the leaves of $\mathcal{S}$ by $V$ (same as for $S_V$), and call the leaves $L_V$. We know that for every $V$, the intersection of $L_V$ with the leaves of the lamination $\mathcal{L}$ is a Cantor set.

Pick an arbitrary but small $\epsilon > 0$, such that if $J_{\delta, \epsilon}$ is a compact interval of length $\epsilon$ along $\ell_{(\mathfrak{p}_0, \mathfrak{q}_0+\delta)}$ having $\ell_{(\mathfrak{p}_0, \mathfrak{q}_0+\delta)}(E_\delta)$ as one of its endpoints, $J_{\delta, \epsilon}\subset \mathcal{V}_\delta$ and $J_{\delta, \epsilon}$ intersects the leaves of $\mathcal{L}$. Denote $\mathcal{C}_{\delta, \epsilon}\eqdef J_{\delta,\epsilon}\cap \mathcal{L}$, which, by adjusting $\epsilon$, can be ensured to be a Cantor set (indeed, for any given $\epsilon$, $\mathcal{C}_{\delta, \epsilon}$ is either a Cantor set or a Cantor set together with an isolated point on the boundary of $J_{\delta, \epsilon}$). Assume also without loss of generality that $J_{\delta,\epsilon}$ is the convex hull of $C_{\delta, \epsilon}$. For what follows, refer to Figure \ref{fig:guide} for visual guidance.

\begin{centering}
\begin{figure}[t]
\includegraphics[scale=.5]{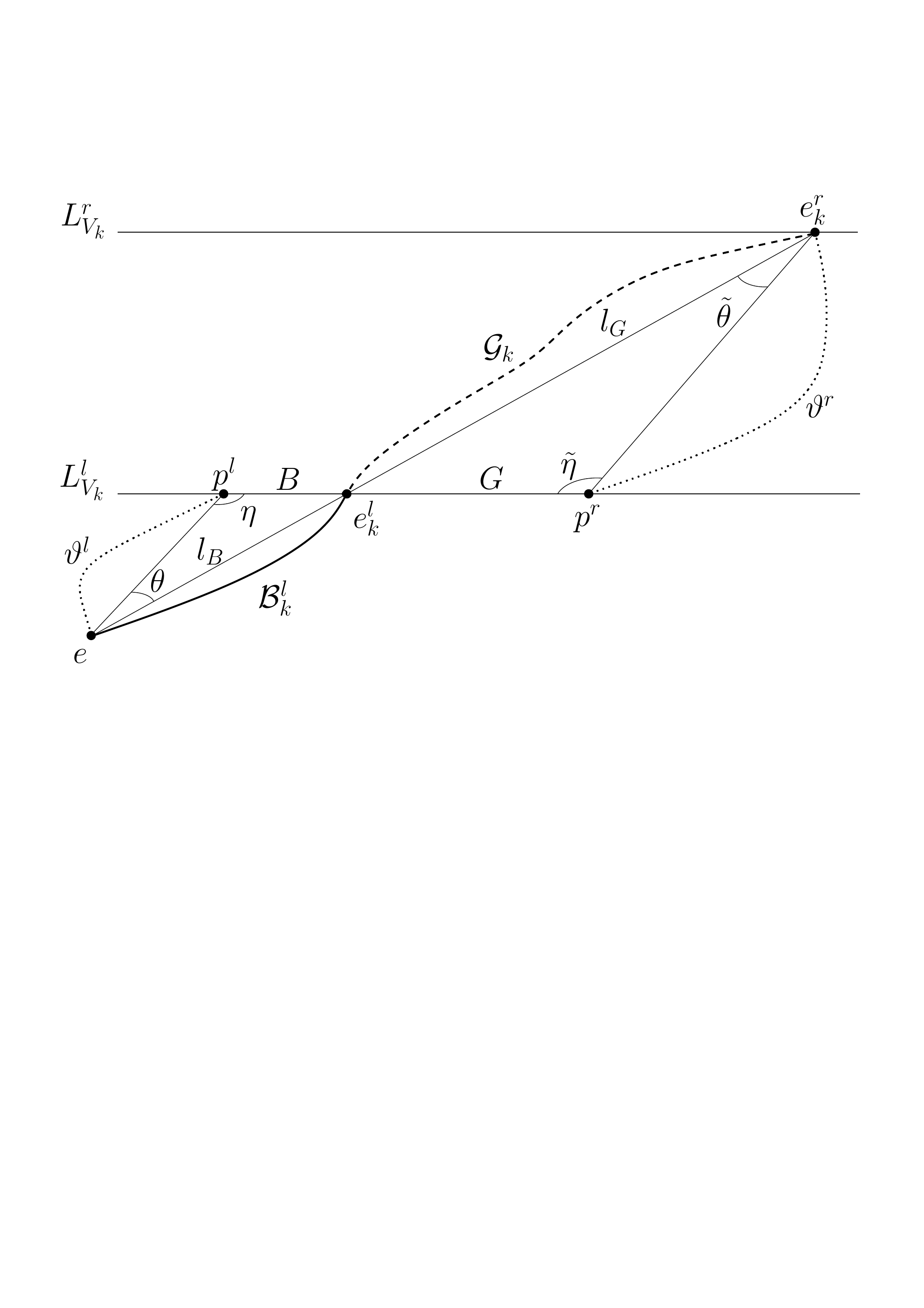}
\caption{}
\label{fig:guide}
\end{figure}
\end{centering}

Let $\set{\mathcal{G}_i}$ be the set of ordered gaps of the Cantor set $\mathcal{C}_{\delta, \epsilon}$. Let $\set{\mathcal{B}_i^l, \mathcal{B}_i^r}$ be the set of the corresponding bands (that is, $\mathcal{B}_k^l$ and $\mathcal{B}_k^r$ are the subintervals of $J_{\delta, \epsilon}\setminus\bigcup_{i=1}^k\mathcal{G}_i$ immediately to the left and immediately to the right of $\mathcal{G}_k$, respectively). For any $k$, denote the two endpoints of $\mathcal{G}_k$ by $e_k^l$ and $e_k^r$, with the assumption that $e_k^l\in\mathcal{B}_k^l$ and $e_k^r\in\mathcal{B}_k^r$. Let us write also $L_{V_k}^\bullet$, $\bullet\in\set{l, r}$, for the leaf of $\mathcal{S}$ containing $e_k^\bullet$. Identify by $p^r$ the point $\vartheta^r\cap L_{V_k}^l$, where $\vartheta^r$ is the leaf of $\mathcal{L}$ whose intersection with $\ell_{(\mathfrak{p}_0, \mathfrak{q}_0+\delta)}$ gives $e_k^r$. If $\vartheta^l\in\mathcal{L}$ is the leaf whose intersection with $\ell_{(\mathfrak{p}_0, \mathfrak{q}_0+\delta)}$ gives the other endpoint of $\mathcal{B}_k^l$, say $e$, denote by $p^l$ the point $\vartheta^l\cap L_{V_k}^l$, and denote by $l_B$ the line segment connecting $e$ and $e_k^l$. Denote by $B$ the line segment connecting $p^l$ and $e_k^l$. Denote the line segment connecting $e_k^l$ and $e_k^r$ by $l_G$. There exist constants $C_1, C_2 > 0$ independent of $\delta$ and $\epsilon$ such that
\begin{align}\label{eq:affinity}
C_1\abs{l_G}\geq \abs{\mathcal{G}_k}_{\ell}\hspace{2mm}\text{ and }\hspace{2mm}
C_2\abs{l_B}\leq \abs{\mathcal{B}_k^l}_{\ell}
\end{align}
(here $\abs{\cdot}$ denotes the length of the line segment, and $\abs{\cdot}_\ell$ denotes the length along the curve $\ell_{(\mathfrak{p}_0, \mathfrak{q}_0+\delta)}$); see \cite[Lemma 3.3]{Yessen2011a} for a simple justification of this.

Now (refer to Figure \ref{fig:guide}) observe that since $\vartheta^{l,r}$ are transversal to the leaves of $\mathcal{S}$ as well as $\ell_{(\mathfrak{p}_0, \mathfrak{q}_0+\delta)}$ (albeit the angle of intersection may depend on $\delta$), assuming that $\epsilon$ was initially chosen sufficiently small (depending on $\delta$ only) we can guarantee that the angles $\theta$, $\eta$, $\tilde{\theta}$ and $\tilde{\eta}$ are such that
\begin{align*}
\frac{\sin(\eta)\sin(\tilde{\theta})}{\sin(\tilde{\eta})\sin(\theta)} > \frac{1}{2}.
\end{align*}
Combining this with \eqref{eq:affinity}, we obtain
\begin{align*}
\frac{\abs{\mathcal{B}_k^l}_\ell}{\abs{\mathcal{G}_k}_\ell}\geq \frac{C_2\abs{l_B}}{C_1\abs{l_G}} = \frac{\abs{B}}{\abs{G}}\frac{C_2\sin(\eta)\sin(\tilde{\theta})}{C_1\sin(\tilde{\eta})\sin(\theta)} > \frac{C_2}{2C_1}\frac{\abs{B}}{\abs{G}}.
\end{align*}
Similar bounds are obtained for the quotient $\abs{\mathcal{B}_k^r}_\ell/\abs{\mathcal{G}_k}_\ell$. On the other hand, since $C_2$ and $C_1$ are universal constants, we can guarantee that
\begin{align*}
\frac{C_2}{2C_1}\frac{\abs{B}}{\abs{G}} > \tau_0
\end{align*}
by choosing $\delta$ suitably small, and choosing $\epsilon$ (depending on $\delta$) sufficiently small so that all the bounds above hold; indeed, this follows since the thickness of the Cantor sets obtained by intersecting the leaves of $\mathcal{S}$ with those of $\mathcal{L}$ tends to infinity at $S_0$ (see \cite{Damanik2010}). Since these estimates are independent of the gap index $k$, we conclude that, for suitably small $\delta$ and $\epsilon$, the thickness of $\mathcal{C}_{\delta, \epsilon}$ is larger than $\tau_0$. This also holds for all parameters $(\mathfrak{p},\mathfrak{q})$ sufficiently close to $(\mathfrak{p}_0, \mathfrak{q}_0+\delta)$.
\end{proof}

\begin{lem}\label{lem:app-4}
For every $\Delta, \delta > 0$, there exists a nonempty open set $\mathcal{U} \subset \mathcal{R}$, such that the following hold for all $\lambda\in\mathcal{U}$.

\begin{enumerate}\itemsep0.5em

\item[\textup{(1)}] There exists $E_0\in\Sigma_\lambda$ such that $\tau^\loc(\Sigma_\lambda, E_0) > \Delta$.

\item[\textup{(2)}] At one of the extrema of $\Sigma_\lambda$, $E_\mathrm{end}$, we have $\lhdim(\Sigma_\lambda, E_\mathrm{end}) < \delta$.
\end{enumerate}
\end{lem}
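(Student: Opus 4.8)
The plan is to combine the two ``extreme'' local behaviors that have already been isolated: the thickness blow-up near $S_0$ (Lemma \ref{lem:app-3}) and the well-known fact that, at the appropriate edge of the spectrum, the local Hausdorff dimension of $\Sigma_\lambda$ can be made arbitrarily small. Both phenomena occur for parameters with $\abs{\p}$ large, so the main task is to produce a single open set $\mathcal U$ on which \emph{both} hold simultaneously, and to verify that each property is stable under small perturbations of $\lambda$.

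First I would invoke Lemma \ref{lem:app-3} with the given $\Delta$ (say $\tau_0 = \Delta$) to obtain a parameter $(\p_0, \q_0) \in \mathcal R$ with $\abs{\p_0}$ large and an $\epsilon_1 > 0$ such that for every $\lambda$ with $\norm{(\p_0,\q_0) - \lambda} < \epsilon_1$ there is $E_0 \in \Sigma_\lambda$ with $\tau^{\loc}(\Sigma_\lambda, E_0) > \Delta$; this immediately gives part (1) on the ball $B_{\epsilon_1}((\p_0,\q_0))$. For part (2), I would use the description of $\Sigma_\lambda$ via the trace map together with the multifractal analysis of \cite[Theorem~2.3]{Yessen2011a} (now available without geometric restrictions thanks to Theorem \ref{thm:transversality}): the local Hausdorff dimension of $\Sigma_\lambda$ at one of its edges $E_{\mathrm{end}}$ is governed by the ratio of Lyapunov exponents at the corresponding fixed point of $f|_{S_V}$, and this quantity tends to $0$ as $V \to \infty$, i.e.\ as $\abs{\p} \to \infty$ along the relevant family. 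Hence, by first taking $\abs{\p_0}$ large enough (enlarging it beyond what Lemma \ref{lem:app-3} needs if necessary, which is harmless since that lemma is stated for all $\Delta$), one arranges $\lhdim(\Sigma_{(\p_0,\q_0)}, E_{\mathrm{end}}) < \delta/2$. Then I would appeal to continuity of the edge of the spectrum and of the center-stable lamination in the $C^1$ topology away from $S_0$ (as in \cite[Proposition~4.10]{Yessen2011} and \cite[Section B.1.2]{Yessen2011}), plus transversality of $\ell_\lambda$ at the edge (Theorem \ref{thm:transversality}), to conclude that $\lhdim(\Sigma_\lambda, E_{\mathrm{end}}(\lambda)) < \delta$ for all $\lambda$ in some ball $B_{\epsilon_2}((\p_0,\q_0))$. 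Taking $\mathcal U = B_{\min(\epsilon_1,\epsilon_2)}((\p_0,\q_0))$ finishes the argument.

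The step I expect to be the main obstacle is making part (2) rigorous: one must pin down \emph{which} extremum of $\Sigma_\lambda$ has small local dimension and show the dimension genuinely goes to zero as $\abs{\p} \to \infty$. This requires locating, for the family $\ell_{(\p_0,\q_0)}$ as $\abs{\p_0}$ grows, the fixed (or low-period) point of $f|_{S_V}$ that sits at the edge of $\ell_\lambda(\Sigma_\lambda)$, estimating its stable and unstable eigenvalues (one of which blows up with $V$), and invoking the bounded-distortion/thermodynamic-formalism machinery that equates the local Hausdorff dimension at that edge with the corresponding eigenvalue ratio — this is exactly the mechanism behind the ``dimension accumulates only at an extremum'' phenomenon cited from \cite{Yessen2011a}. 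A secondary, more routine obstacle is bookkeeping the perturbative stability: local Hausdorff dimension is only upper semicontinuous in general, so one must use the uniform transversality and the $C^1$-continuity of the lamination to get the one-sided bound $\lhdim < \delta$ on a whole neighborhood, and check that the edge $E_{\mathrm{end}}(\lambda)$ depends continuously on $\lambda$ (which again follows from continuity of $\Sigma_\lambda$ in the Hausdorff metric and transversality).
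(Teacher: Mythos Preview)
Your overall architecture matches the paper's: invoke Lemma~\ref{lem:app-3} for (1), show the local dimension at one extremum can be made small for (2), and then use continuity to get an open $\mathcal U$. Part (1) is handled exactly as in the paper.

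For part (2), however, your proposed mechanism is both more complicated and not quite correct. You describe the local dimension at the edge as ``governed by the ratio of Lyapunov exponents at the corresponding fixed point of $f|_{S_V}$''; the edge of $\Sigma_\lambda$ does not in general correspond to a fixed point of the trace map, and this is not the formula that is used. The paper instead invokes the identity
\[
\lhdim(\Sigma_\lambda, E) = \tfrac{1}{2}\hdim(\Omega_{V_\lambda(E)})
\]
(from the proof of \cite[Theorem~2.1(iii)]{Yessen2011}) together with $\lim_{V\to\infty}\hdim(\Omega_V) = 0$ from \cite{Damanik2008}. What remains is to force $V_\lambda(E_{\mathrm{end}})$ to be large. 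Here the paper writes down the explicit expression \eqref{eq:v},
\[
V_\lambda(E) = \frac{\q(\p^2-1)E + \q^2 + (\p^2-1)^2}{4\p^2},
\]
notes it is monotone in $E$ (so one extremum of $\Sigma_\lambda$ realizes the maximum of $V_\lambda$ over the spectrum), and uses $\|H_{(\p,\q)}\| \asymp \max\{|\p|,|\q|\}$ to see that this maximum can be made arbitrarily large while retaining the freedom in Lemma~\ref{lem:app-3}. You assert ``$V\to\infty$ as $|\p|\to\infty$'' but never supply this computation, which is the actual content of (2). Finally, stability of (2) under perturbation follows immediately from continuity of $V\mapsto\hdim(\Omega_V)$ via the identity above; the upper-semicontinuity worries you raise and the $C^1$-lamination machinery are unnecessary for this step.
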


\begin{proof}
Recall that the spectral radius of a self-adjoint operator is equal to its norm, and that $\norm{H_{(\mathfrak{p}, \mathfrak{q})}}$ is of order $\max\set{\abs{\mathfrak{p}}, \abs{\mathfrak{q}}}$. Consequently, for all $C > 0$, $\Delta > 1$, and $\tau_0 > 0$, we can always find $\lambda = (\mathfrak{p}$, $\mathfrak{q}) \in \mathcal{R}$, and $E_0\in \Sigma_\lambda$ so that the conclusion of Lemma~\ref{lem:app-3} holds and such that $I(\ell_\lambda(E_1)) > C$ for some $E_1\in \Sigma_\lambda$. Indeed, this follows since
\begin{align}\label{eq:v}
V_\lambda(E)
\eqdef
I(\ell_\lambda(E))
=
\frac{\mathfrak{q}(\mathfrak{p}^2-1)E + \mathfrak{q}^2 + (\mathfrak{p}^2 - 1)^2}{4\mathfrak{p}^2}
\end{align}
for all $E$, which can easily by computed from \eqref{eq:surf}. Evidently, $V_\lambda$ is a monotone function of $E$, so we can take $E_1$ to be one of the extrema of $\Sigma_\lambda$.

On the other hand, for any $E\in\Sigma_\lambda$,
\begin{align*}
\hdim^\loc(\Sigma_\lambda, E) = \frac{1}{2}\hdim(\Omega_{V_\lambda(E)}),
\end{align*}
where $\Omega_V$ is the nonwandering set of $f|_{S_V}$ (see the proof of in \cite[Theorem~2.1(iii)]{Yessen2011}). Moreover, from \cite{Damanik2008}, we have
\begin{align*}
\lim_{V\rightarrow\infty}\hdim(\Omega_V)=0,
\end{align*}
so we can get (1) and (2) for $\lambda$ by taking $C > 0$ sufficiently large above. That small perturbations of $\lambda$ do not destroy these bounds follows from Lemma \ref{lem:app-3} and the fact that $\hdim(\Omega_V)$ is continuous in $V$.
\end{proof}

\begin{rem}\label{rem:extremum}
It is not difficult to see from the proof of Lemma \ref{lem:app-3} that $E_0$ can be taken arbitrarily close to the other extremum of the spectrum. In fact, $\mathcal{U}$ can be adjusted so that $E_0$ can be taken to be an extremum of $\Sigma_\lambda$ (that is, we need to make sure that $\ell_\lambda(E_0)$ does not lie on $S_0$ in the proof of Lemma \ref{lem:app-3}).
\end{rem}

\begin{proof}[Proof of Theorem~\ref{thm:thm2}]
Take $\Delta > 1$ and $\delta\in(0, 1/2)$. With this choice of $\Delta$ and $\delta$, let $\mathcal{U}$ be as in Lemma \ref{lem:app-4}. For $\lambda_i\in \mathcal{U}$ with $i = 1, 2$, let $E_0(\lambda_i)$ and $E_\mathrm{end}(\lambda_i)$ be as in the Lemma.

Let $E_\mathrm{end}^2 = E_\mathrm{end}(\lambda_1)+E_\mathrm{end}(\lambda_2)$. Notice that $E_\mathrm{end}^2$ is an extremum of $\Sigma_{(\lambda_1, \lambda_2)}^2$; let us assume, without loss of generality, that it is the supremum (so $E_\mathrm{end}(\lambda_1)$ and $E_\mathrm{end}(\lambda_2)$ are also the suprema, provided that $\mathcal{U}$ is chosen sufficiently small in the beginning). Take $\epsilon > 0$ such that
\begin{align*}
\hdim([E_\mathrm{end}(\lambda_i)-\epsilon, E_\mathrm{end}(\lambda_i)]\cap \Sigma_{\lambda_i}) < \delta.
\end{align*}
We claim that there exists $\tilde{\epsilon} > 0$ such that
\begin{align*}
\hdim([E_\mathrm{end}^2 - \tilde{\epsilon}, E_\mathrm{end}^2]\cap \Sigma_{(\lambda_1, \lambda_2)}^2) < 1.
\end{align*}
Indeed, for arbitrary sets $A, B\subset\R$, we have
\begin{align}\label{eq:dim-sum}
\hdim(A + B) \leq \min\set{\overline{\bdim}(A)+\hdim(B), 1}
\end{align}
(see \cite[Theorem~8.10(2)]{Mattila1995}). On the other hand, we have
\begin{align}\label{eq:haus-box}
\begin{split}
&\hdim([E_\mathrm{end}(\lambda_i)-\epsilon, E_\mathrm{end}(\lambda_i)]\cap \Sigma_{\lambda_i}) \\
=\hspace{2mm}&\overline{\bdim}([E_\mathrm{end}(\lambda_i)-\epsilon, E_\mathrm{end}(\lambda_i)]\cap \Sigma_{\lambda_i}),
\end{split}
\end{align}
which follows from the transversality of intersection of $\ell_\lambda$ with the center-stable manifolds (see \cite[Proposition~3.2]{Yessen2011a}). Thus $[E_\mathrm{end}^2-\tilde{\epsilon}, E_\mathrm{end}^2]\cap \Sigma_{(\lambda_1, \lambda_2)}^2$ has Hausdorff dimension strictly smaller than one, hence is of zero Lebesgue measure. Since it is also compact and does not contain any isolated points (except possibly the boundary $E_\mathrm{end}^2 - \tilde{\epsilon}$, which can be remedied by decreasing $\tilde{\epsilon}$), it follows that it is a Cantor set.

Finally, given that $\Delta > 1$, it follows from the Gap Lemma of S.\ Newhouse (see, e.g., \cite[Theorem~2.2]{Astels2000} for a derivation) that $\Sigma_{(\lambda_1, \lambda_2)}^2$ contains an interval around $E_0(\lambda_1)+E_0(\lambda_2)$.
\end{proof}

\begin{rem}\label{rem:thm2-details}
As mentioned in Remark \ref{rem:thm1}, we suspect that for all $\lambda_1, \lambda_2\in\mathcal{R}$ sufficiently close to $(1, 0)$, $\Sigma_{(\lambda_1, \lambda_2)}^2$ is an interval. A proof of this would involve control of the global thickness of $\Sigma_\lambda$ for $\lambda$ close to $(1, 0)$. Such control can be obtained via finer estimates than those used in the proof above, but the arguments would be far more technical. We have decided to delegate this task to later investigations.
\end{rem}

\subsection{Proof of Theorem \ref{thm:thm1}}\label{sec:proof-thm-2}

In what follows, we need a slight generalization of \cite[Proposition~2.3]{Damanik2013X}, namely Proposition~\ref{prop:key_prop} below. For the sake of convenience and completeness, let us first recall the general setup of \cite{Damanik2013X}.

Let $\mathcal{B}$ be a finite set of cardinality $|\mathcal{B}| \geq 2$, and consider $\mathcal{B}^{\mathbb{Z}_{+}}$, the standard symbolic space, equipped with the product topology. Here $\Z_+ = \set{0, 1, 2, \dots}$.
Let $\mathcal{E}$ be a Borel subset of $\mathcal{B}^{\mathbb{Z}_{+}}$, and
let $\mu$ be a probability measure on $\mathcal{E}$.

Let $J \subset \mathbb{R}$ be a compact nondegenerate interval. We assume that we are given a
family of continuous maps
\begin{equation*}
\Pi_{j} : \mathcal{E} \to \mathbb{R}, \ j \in J,
\end{equation*}
and
\begin{equation*}
\nu_{j} = \Pi_{j}(\mu) \eqdef \mu \circ \Pi_{j}^{-1}.
\end{equation*}

For a word $u \in \mathcal{B}^{n}, n \geq 0$, we denote by $|u| = n$ its length and by $[u]$ the cylinder set of
elements of $\mathcal{E}$ that have $u$ as a prefix. More precisely,
$[u] = \left\{ \omega \in \mathcal{E} : \omega_{0} \cdots \omega_{n-1} = u \right\}$.
For $\omega, \tau \in \mathcal{E}$, we write $\omega \wedge \tau$ for the maximal common prefix of
$\omega$ and $\tau$, which is empty if $\omega_{0} \neq \tau_{0}$; we set the length of the empty word to be zero.
Furthermore, for $\omega, \tau \in \mathcal{E}$, let
\begin{equation*}
\phi_{\omega, \tau}(j) \eqdef \Pi_{j}(\omega) - \Pi_{j}(\tau).
\end{equation*}
We write $\mathcal{L}^{1}$ for one-dimensional Lebesgue measure on $\R$.

\begin{prop}\label{prop:key_prop}
Let $\eta$ be a compactly supported Borel measure on the real line.
Suppose that the following holds. For every $\epsilon > 0$ there exists $\mathcal{E}_0\subset \mathcal{E}$ with $\mu(\mathcal{E}_0) > 1 - \epsilon$ and constants $C_{1}, C_{2}, C_{3}, \alpha, \beta, \gamma > 0$ and $k_{0} \in \mathbb{Z}_{+}$ such that
\begin{equation}\label{dimension}
\lhdim(\eta, x)\eqdef\lim_{r \downarrow 0} \frac{ \log \eta(B_{r}(x)) }{ \log r } \geq d_{\eta}  \ \text{ for $\eta$-a.e. $x$, }
\end{equation}
where $B_{r}(x) = [x - r, x + r]$ and $d_\eta$ satisfies
\begin{equation}
d_{\eta} + \frac{\gamma}{\beta} > 1 \ \text{ and } \ d_{\eta} > \frac{\beta - \gamma}{\alpha};
\end{equation}
\begin{equation}
\max_{j \in J} |\phi_{\omega, \tau}(j)| \leq C_{1} \abs{\mathcal{B}}^{-\alpha |\omega \wedge \tau| }
\ \text{ for all } \omega, \tau \in \mathcal{E}_{0}, \ \omega \neq \tau;
\end{equation}
\begin{equation}
\sup_{v \in \mathbb{R}} \mathcal{L}^{1}
\left( \left\{  j \in J : |v + \phi_{\omega, \tau} (j) | \leq r \right\} \right)
\leq C_{2} \abs{\mathcal{B}}^{|\omega \wedge \tau| \beta} r \ \text{ for all } \omega, \tau \in \mathcal{E}_{0}, \ \omega \neq \tau
\end{equation}
such that $|\omega \wedge \tau| \geq k_{0}$,  and
\begin{equation}
\max_{u \in \mathcal{B}^{n}, [u] \cap \mathcal{E}_{0} \neq \emptyset} \mu([u]) \leq C_{3} \abs{\mathcal{B}}^{-\gamma n} \
\text{ for all } n \geq 1.
\end{equation}
Then, $\eta \ast \nu_{j} \ll \mathcal{L}^{1}$ for Lebesgue-a.e. $j \in J$.
\end{prop}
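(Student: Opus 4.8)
The plan is to adapt the argument of \cite[Proposition~2.3]{Damanik2013X}; the one new feature is that one of the two symbolic factors there has been replaced by the fixed measure $\eta$, so the transversality estimates which served to bound integrals in the second family must now be replaced by the local‑dimension hypothesis \eqref{dimension}. I would show that for Lebesgue‑a.e.\ $j\in J$ the lower derivative $\underline{D}(\eta\ast\nu_j,x)\eqdef\liminf_{r\downarrow0}(\eta\ast\nu_j)(B_r(x))/(2r)$ is finite for $(\eta\ast\nu_j)$‑a.e.\ $x$; by the standard differentiation theorem for Radon measures on $\R$ (cf.\ \cite[Chapter~2]{Mattila1995}), a measure with a.e.\ finite lower derivative is absolutely continuous, which is the conclusion. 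Expanding the convolution and disintegrating $d(\eta\ast\nu_j)(x)$ along $(t,\tau)\mapsto t+\Pi_j(\tau)$, one checks that for every fixed $j$ and $r>0$,
\begin{multline*}
\int_\R \frac{(\eta\ast\nu_j)(B_r(x))}{2r}\, d(\eta\ast\nu_j)(x) \\
= \frac{1}{2r}\iint_{\mathcal{E}\times\mathcal{E}}\iint_{\R\times\R}\mathbf{1}\bigl[\,\abs{(s-t)+\phi_{\omega,\tau}(j)}\le r\,\bigr]\,d\eta(s)\,d\eta(t)\,d\mu(\omega)\,d\mu(\tau).
\end{multline*}
By two applications of Fatou's lemma the proposition reduces to showing that, with $r_n\eqdef\abs{\mathcal{B}}^{-n}$,
\begin{equation*}
\liminf_{n\to\infty}\ \frac{1}{2r_n}\int_J\iint_{\mathcal{E}\times\mathcal{E}}\iint_{\R\times\R}\mathbf{1}\bigl[\,\abs{(s-t)+\phi_{\omega,\tau}(j)}\le r_n\,\bigr]\,d\eta(s)\,d\eta(t)\,d\mu(\omega)\,d\mu(\tau)\,dj<\infty.
\end{equation*}

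Two preliminary reductions. For $m\in\N$, apply the hypothesis with $\epsilon=1/m$ to obtain $\mathcal{E}_0^{(m)}$ with $\mu(\mathcal{E}_0^{(m)})>1-1/m$ on which the four displayed bounds hold, arranged so that the $\mathcal{E}_0^{(m)}$ increase. Fixing once and for all some $d'<d_\eta$, an Egorov‑type exhaustion applied to \eqref{dimension} produces Borel sets $F_m\uparrow$ with $\eta(\R\setminus\bigcup_m F_m)=0$ and constants $C_m,r_0(m)>0$ such that $\eta(B_r(x))\le C_m r^{d'}$ for all $x\in F_m$ and $0<r\le r_0(m)$. Since $\eta|_{F_m}\ast\Pi_j(\mu|_{\mathcal{E}_0^{(m)}})\uparrow\eta\ast\nu_j$ setwise and an increasing limit of absolutely continuous measures is absolutely continuous, it suffices to prove the conclusion with $\eta$ and $\mu$ replaced by $\eta|_{F_m}$ and $\mu|_{\mathcal{E}_0^{(m)}}$, for each fixed $m$. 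We may therefore assume henceforth that the four hypothesis bounds hold on all of $\mathcal{E}$ and that $\eta(B_r(x))\le C r^{d'}$ for $\eta$‑a.e.\ $x$ and all $0<r\le r_0$.

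The core estimate. Fix $t\in\R$ and $\omega\neq\tau$ in $\mathcal{E}$, and write $\ell\eqdef\abs{\omega\wedge\tau}$. The Hölder‑type bound confines the curve $j\mapsto t-\phi_{\omega,\tau}(j)$ to the ball $B_{C_1\abs{\mathcal{B}}^{-\alpha\ell}}(t)$, while the sublevel (transversality) bound says that the image of $\mathcal{L}^1|_J$ under this map has density at most $C_2\abs{\mathcal{B}}^{\beta\ell}$; hence
\begin{equation*}
\int_J\eta\bigl(B_r(t-\phi_{\omega,\tau}(j))\bigr)\,dj\ \le\ C_2\abs{\mathcal{B}}^{\beta\ell}\int_{B_{C_1\abs{\mathcal{B}}^{-\alpha\ell}}(t)}\eta(B_r(w))\,dw.
\end{equation*}
An elementary computation gives $\int_{B_\rho(t)}\eta(B_r(w))\,dw\le 2\min(r,\rho)\,\eta\bigl(B_{2\max(r,\rho)}(t)\bigr)$; integrating the previous line in $t$ against $\eta$ and invoking the uniform local‑dimension bound (legitimate once $\max(r,\rho)\le r_0$, the finitely many smaller $\ell$ contributing a bounded amount) leaves, with $\rho_\ell=C_1\abs{\mathcal{B}}^{-\alpha\ell}$, a contribution $\lesssim\abs{\mathcal{B}}^{\beta\ell}\min(r,\rho_\ell)\max(r,\rho_\ell)^{d'}$. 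Summing over $\omega,\tau$ via $(\mu\times\mu)(\set{(\omega,\tau):\abs{\omega\wedge\tau}=\ell})\le\sum_{\abs{u}=\ell}\mu([u])^2\le(\max_{\abs{u}=\ell}\mu([u]))\sum_{\abs{u}=\ell}\mu([u])\le C_3\abs{\mathcal{B}}^{-\gamma\ell}$, dividing by $2r_n$, and splitting the resulting $\ell$‑series according to whether $\alpha\ell\le n$ or $\alpha\ell>n$ (equivalently, whether $\rho_\ell\ge r_n$ or not), one is left — up to a constant independent of $n$ — with
\begin{equation*}
\sum_{\alpha\ell\le n}\abs{\mathcal{B}}^{(\beta-\alpha d'-\gamma)\ell}\ +\ \abs{\mathcal{B}}^{\,n(1-d')}\sum_{\alpha\ell>n}\abs{\mathcal{B}}^{(\beta-\alpha-\gamma)\ell}.
\end{equation*}
The first term is a geometric series, bounded uniformly in $n$, exactly when $\alpha d'+\gamma>\beta$; the second is comparable to $\abs{\mathcal{B}}^{\,n((\beta-\gamma)/\alpha-d')}$, hence bounded as $n\to\infty$ under the same inequality (its tail being summable since $\beta<\alpha+\gamma$, which follows from $(\beta-\gamma)/\alpha<d_\eta\le1$). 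Letting $d'\uparrow d_\eta$, both requirements are met by the hypotheses imposed on $d_\eta$, so the displayed $\liminf$ is finite and the proposition follows.

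The step demanding the most care is the passage from the qualitative, pointwise hypothesis \eqref{dimension} to the uniform estimate $\eta(B_r(x))\le C r^{d'}$ exploited in the core estimate, together with the coordination of the two exhaustions (in $\mathcal{E}$ and in $\eta$), the geometric scale $r=\abs{\mathcal{B}}^{-n}$, and the limit $d'\uparrow d_\eta$, so that no implied constant degenerates in the relevant limit; one must also verify that the pairs with $\abs{\omega\wedge\tau}<k_0$, outside the scope of the sublevel hypothesis, contribute only a bounded amount (this is provided in the applications, and one convenient device is to pass to a power of the alphabet). Beyond this bookkeeping the argument is a direct transcription of \cite[Proposition~2.3]{Damanik2013X}: the push‑forward/sublevel estimate above is exactly the transversality input used there, now applied only to the $\mu$‑factor, with the transversality of the second symbolic family in \cite{Damanik2013X} replaced here by the local dimension of $\eta$.
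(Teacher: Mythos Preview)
Your argument is correct and is precisely the approach the paper has in mind: the paper gives no proof at all, merely remarking that ``Proposition~\ref{prop:key_prop} can be proved by repeating verbatim the proof of \cite[Proposition~2.3]{Damanik2013X},'' and your sketch is a faithful unpacking of that proof (Fatou plus the energy integral, split at the scale $\alpha\ell\sim n$, with the cylinder-measure bound controlling the $\ell$-sum). One small correction of context: you describe the novelty as ``one of the two symbolic factors there has been replaced by the fixed measure $\eta$,'' but in fact \cite[Proposition~2.3]{Damanik2013X} already has the abstract measure $\eta$; the only change here is that the exact-dimensionality assumption on $\eta$ is weakened to the one-sided bound \eqref{dimension}. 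Your Egorov step, passing from the a.e.\ lower local-dimension bound to a uniform Frostman estimate $\eta(B_r(x))\le C r^{d'}$ on large sets $F_m$, is exactly the device needed to absorb this weakening, and the rest of your estimate is indeed verbatim \cite{Damanik2013X}.
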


The only difference between Proposition \ref{prop:key_prop} and \cite[Proposition~2.3]{Damanik2013X} is condition (\ref{dimension}). In \cite{Damanik2013X}, the measure $\eta$ is exact dimensional. Nevertheless, Proposition \ref{prop:key_prop} can be proved by repeating verbatim the proof of \cite[Proposition 2.3]{Damanik2013X}.

\begin{figure}[t]
\includegraphics{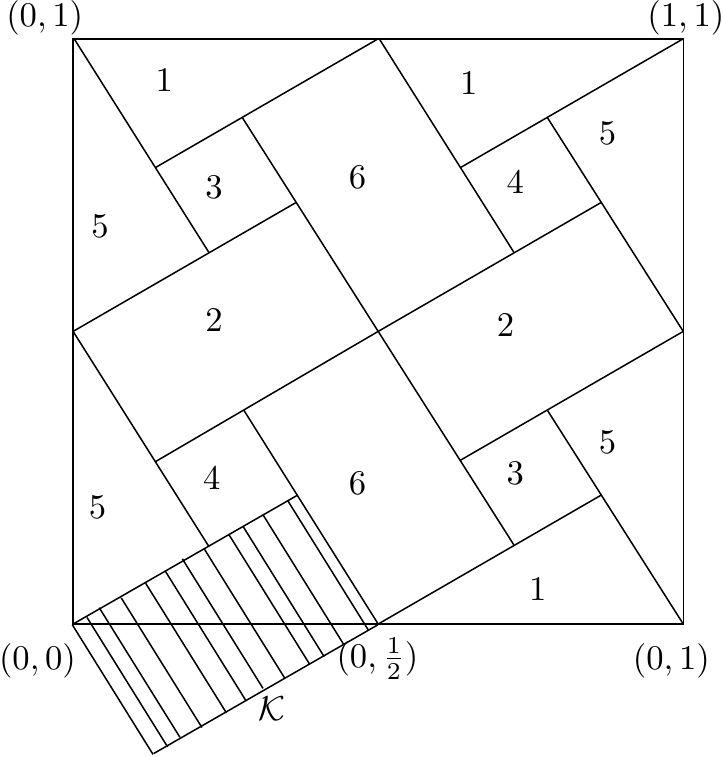}
\caption{The Markov Partition for the map $\mathcal{A}$, the rectangle $\mathcal{K}$, and the stable manifolds
which lie inside $\mathcal{K}$.}\label{fig:markov}
\end{figure}

Let
\begin{equation}\label{eq:cc-center}
\mathbb{S} \eqdef S_{0} \cap \left\{  (x, y, z) \in \mathbb{R}^3 : |x| \leq 1, |y| \leq 1, |z| \leq 1  \right\}.
\end{equation}

The trace map $f$ restricted to $\mathbb{S}$ is a factor of the hyperbolic automorphism of $\mathbb{T}^2 \eqdef \mathbb{R}^2 / \mathbb{Z}^2$ given by
\begin{equation}\label{eq:anosov}
\mathcal{A}( \theta, \varphi ) = (\theta + \varphi, \theta).
\end{equation}
The semi-conjugacy is given by the map
\begin{equation}\label{eq:sconj}
F : ( \theta, \varphi ) \mapsto ( \cos 2\pi(\theta + \varphi), \cos 2\pi \theta, \cos 2\pi \varphi ).
\end{equation}
Let us fix a Markov partition of the map $\mathcal{A}$ (for example, see Figure~\ref{fig:markov}). Pick one element of this Markov partition, denote it by $\mathcal{K}$, and let $\mathcal{K}_0$ denote the projection of $\mathcal{K}$ to $\mathbb{S}$ via the map $F$. The Markov partition for $f: \mathbb{S} \to \mathbb{S}$ can be continued to a Markov partition for the map $f|_{\Omega_V}: \Omega_V \to \Omega_V$. One can check that the elements of this induced Markov partition on $\Omega_V$ are disjoint for every $V > 0$. Let us denote the continuation of $\mathcal{K}_0$ along the parameter $V$ by $\mathcal{K}_V$.

Suppose that $\sigma : \tilde{\mathcal{E}} \to \tilde{\mathcal{E}}$
is the two-sided topological Markov chain conjugate to $f|_{\Omega_V}$ via the conjugacy
$H_{V} : \tilde{\mathcal{E}} \to \Omega_{V}$.
Let $\tilde{\mu}$ be the measure of maximal entropy for
$\sigma : \tilde{\mathcal{E}} \to \tilde{\mathcal{E}}$, and let $\tilde{\mu}_V$ denote the induced measure on $\Omega_V$, that is, $\tilde{\mu}_{V} = H_{V}(\tilde{\mu})$.

Consider the stable manifolds which lie inside $\mathcal{K}_{0}$. These stable manifolds have natural continuations for all $V > 0$ (see \cite[Proposition 4.10]{Yessen2011} for details), and so they form a lamination of two-dimensional smooth (away from $S_0$) manifolds. We denote the resulting lamination of these \emph{center-stable manifolds} by $\widetilde{\Omega}$ (these are the same center-stable manifolds as in Section~\ref{sec:background} and the proof of Theorem~\ref{thm:thm2} above). Let us restrict the measure $\tilde{\mu}_{V}$ to $\mathcal{K}_V$ and normalize it. This naturally induces a measure on $\widetilde{\Omega}$; we abuse the notation and denote this measure by $\widetilde{\mu}$.

\begin{prop}\label{prop:meas-proj}
For every $\lambda \in \mathcal{R}$, the projection of $\widetilde{\mu}$ to $\ell_\lambda$ is proportional to the density of states measure
$dk_\lambda$ under the identification \eqref{eq:line}. In particular, if $\mathcal{K}$ is
the element of the Markov partition in Figure \ref{fig:markov} which contains the interval $[0, 1/2] \times \{0\}$, then the projection
of $\widetilde{\mu}$ to $\ell_\lambda$ corresponds to
the density of states measure under the identification \eqref{eq:line}.
\end{prop}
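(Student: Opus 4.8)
The plan is to identify the density of states measure $dk_\lambda$ with the measure on the center-stable lamination $\widetilde\Omega$ via the trace-map correspondence, and then track how it behaves under projection to $\ell_\lambda$. The starting point is the known fact (recorded in the background section) that $E \in \Sigma_\lambda$ iff the forward orbit of $\ell_\lambda(E)$ is bounded, and that $\ell_\lambda(\Sigma_\lambda)$ is precisely the intersection of $\ell_\lambda$ with the center-stable lamination. So there is a natural homeomorphism $E \mapsto (\text{leaf through } \ell_\lambda(E))$ from $\Sigma_\lambda$ onto a subset of the leaf space of $\widetilde\Omega$ (restricted to the chosen Markov piece $\mathcal K_V$, as $V = V_\lambda(E)$ ranges over $\{V_\lambda(E): E \in \Sigma_\lambda\}$ by \eqref{eq:v}). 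The claim is that under this identification the pushforward of $\widetilde\mu$ is $dk_\lambda$, up to the normalization constant coming from restricting $\tilde\mu_V$ to $\mathcal K_V$.

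First I would recall the dynamical characterization of $dk_\lambda$: the integrated density of states $k_\lambda(E)$ measures the asymptotic fraction of eigenvalues below $E$ for finite truncations, and by the classical transfer-matrix/trace-map analysis of the Fibonacci operator (as in the Sütő-type arguments cited via \cite{Yessen2011a, Yessen2011}) this equals a normalized count of "canonical" periodic orbits, i.e.\ it coincides with the measure of maximal entropy for the subshift conjugate to $f|_{\Omega_V}$, transported to the spectrum. Concretely, $f|_{\mathbb S}$ is a factor of the hyperbolic toral automorphism $\mathcal A$ via $F$ in \eqref{eq:sconj}, whose measure of maximal entropy is Lebesgue measure on $\mathbb T^2$; pushing forward by $F$ gives a natural measure on $\mathbb S$, and this is exactly the $V=0$ density of states (this is the well-known fact that for the free-type case the IDS is governed by the rotation number / Lebesgue on the torus). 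For $V > 0$ one continues the Markov partition and the measure of maximal entropy $\tilde\mu$ of the subshift $\sigma: \tilde{\mathcal E} \to \tilde{\mathcal E}$, obtaining $\tilde\mu_V = H_V(\tilde\mu)$ on $\Omega_V$; the identification of $\tilde\mu_V$ restricted to the relevant stable lamination with $dk_\lambda$ (for the line $\ell_\lambda$ hitting $S_V$) is precisely the content of the trace-map description of the IDS — I would cite \cite[Theorem~2.1]{Yessen2011} and \cite{Yessen2011a} for this, since the same computation that gives $\hdim^\loc(\Sigma_\lambda, E) = \tfrac12\hdim(\Omega_{V_\lambda(E)})$ also yields the measure identification.

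The key steps in order: (1) state the homeomorphism between $\Sigma_\lambda$ and the trace of $\ell_\lambda$ on $\widetilde\Omega$, parametrized so that $\ell_\lambda(E) \in S_{V_\lambda(E)}$ with $V_\lambda$ monotone in $E$; (2) on each surface $S_V$, identify the normalized restriction of $\tilde\mu_V$ to $\mathcal K_V$ with the conditional of $dk_\lambda$ on the corresponding energy interval, using that both are the image of the measure of maximal entropy under the Markov coding and that the density of states for the Fibonacci trace map is the maximal-entropy measure transported along the stable lamination; (3) assemble these conditionals: since $\widetilde\mu$ is built by restricting $\tilde\mu_V$ to $\mathcal K_V$ for each $V$ and the leaves of $\widetilde\Omega$ foliate transversally across the $S_V$, the projection to $\ell_\lambda$ reconstructs $dk_\lambda$ up to the total mass $\widetilde\mu(\mathcal K_V)$, which is a fixed constant independent of $\lambda$ by the Markov property (each Markov rectangle carries the same $\tilde\mu$-mass, $1/N$ where $N$ is the number of rectangles); (4) finally, observe that when $\mathcal K$ is the distinguished rectangle containing $[0,1/2] \times \{0\}$, the combinatorics work out so that the sum of the relevant rectangle masses over the full spectrum equals $1$, giving exact equality $dk_\lambda$ rather than mere proportionality — this is because that rectangle is the one through which the line $\ell_\lambda$ actually passes as $E$ sweeps the spectrum, so $\ell_\lambda$ meets $\widetilde\mu$ on all of its mass.

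The main obstacle I anticipate is step (2)–(3): making precise that restricting the measure of maximal entropy $\tilde\mu_V$ to one Markov piece $\mathcal K_V$ and then projecting along the (one-dimensional, since we are inside the leaf space) transverse direction to $\ell_\lambda$ genuinely recovers the density of states, rather than some reparametrized version of it. The subtlety is that the density of states is defined spectrally (via the IDS) and one must invoke the established dictionary — that the IDS of the Fibonacci-type operator corresponds, under the trace-map conjugacy, to the maximal-entropy measure on the hyperbolic set $\Omega_V$ pushed to the stable lamination — together with the fact that the center-stable lamination $\widetilde\Omega$ is holonomy-compatible across the $\{S_V\}$ so that "project first, then the foliation structure" commutes with "assemble the $V$-conditionals." I would handle this by reducing to the $V=0$ Anosov model via the semiconjugacy $F$, where the statement is transparent (Lebesgue on $\mathbb T^2$ maps to the free-type IDS), and then propagating to $V>0$ by the continuity of the center-stable lamination in $C^1$ (\cite[Proposition~4.10]{Yessen2011}) and continuity of $\tilde\mu_V$ in $V$; the normalization constant and the special role of the rectangle through $[0,1/2]\times\{0\}$ then follow from a direct check of which Markov rectangles the line $\ell_\lambda$ traverses.
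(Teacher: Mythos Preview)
The paper does not actually prove this proposition: immediately after the statement it simply writes ``For a proof of Proposition~\ref{prop:meas-proj}, see Claim~3.16 and the discussion preceding it in \cite{Yessen2011a}, which is an extension to the Jacobi case of the original result given in \cite{Damanik2012} for the Schr\"odinger operators.'' So the paper's entire argument is a citation, and your sketch already goes further than the paper does by attempting to unpack what is in those references.

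Your core identification --- that the density of states arises as the projection of the measure of maximal entropy on $\Omega_V$ to the line of initial conditions, ultimately because the IDS is a limit of normalized eigenvalue counts for periodic approximants and periodic points equidistribute with respect to the measure of maximal entropy --- is exactly the mechanism behind the cited results. Where your outline drifts is in steps (2)--(3): framing the argument as ``identify $\tilde\mu_V|_{\mathcal K_V}$ with the conditional of $dk_\lambda$ on the corresponding energy interval'' and then ``assemble these conditionals'' suggests a disintegration of $dk_\lambda$ over the level sets $\{V_\lambda(E) = V\}$. For generic $\lambda$ the map $E \mapsto V_\lambda(E)$ in \eqref{eq:v} is strictly monotone, so each fiber is a single point and there is no nontrivial conditional structure to assemble. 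The actual argument in \cite{Damanik2012, Yessen2011a} does not proceed this way: one fixes $\lambda$, shows directly that the weak-$*$ limit of the periodic spectral measures coincides with the pushforward of the maximal-entropy measure along the center-stable lamination to $\ell_\lambda$ (via equidistribution and the trace recursion), and reads off the normalization from which Markov rectangle the line traverses. Your reduction-to-$V=0$-then-continue strategy could perhaps be made to work, but it is not the route taken in the references the paper invokes, and the ``conditional'' language should be dropped in favor of the direct periodic-approximation argument.
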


For a proof of Proposition~\ref{prop:meas-proj}, see Claim~3.16 and the discussion preceding it in \cite{Yessen2011a}, which is an extension to the Jacobi case of the original result given in \cite{Damanik2012} for the Schr\"odinger operators.

% Let us remark that for countably many values of $\kappa$, the natural continuation of $\phi_{\kappa}$ is not unique, but
% the resulting measure on $\ell_{(\mathfrak{p}, \mathfrak{q})}$ is well-defined. The more precise statement of the projection of the measure to $\ell_{(\mathfrak{p}, \mathfrak{q})}$ is the following. For $(\mathfrak{p}, \mathfrak{q}) = (1, 0)$, we have a projection given in the Figure \ref{Markov}. This projection has the natural continuation for all $(\mathfrak{p}, \mathfrak{q}) \in \mathcal{R}$, and we consider this projection in the statement above.

Let $\ell_{(\mathfrak{p}, \mathfrak{q})}$ be as in \eqref{eq:line}, let $J$ be a compact interval in $\R$,
and assume that $\mathcal{U}$ is a subset of $\mathcal{R}$, where $\mathcal{R}$ is as in \eqref{eq:params}.
Assume further that $\alpha: J\rightarrow\mathcal{U}$ is analytic, so that
$\set{\ell_{\alpha(t)}}_{t\in J}$ is an analytic family of lines.

Let $\mathcal{E}^{+}$ be the one-sided topological Markov chain which is obtained by deleting the negative side of
$\tilde{\mathcal{E}}$.
Let $\mathcal{E}$ be the subset of $\mathcal{E}^{+}$ which corresponds to $\widetilde{\Omega}$.
Note that $\widetilde{\mu}$ naturally induces a probability measure on $\mathcal{E}$, which we denote by $\mu$.

For any $\omega \in \mathcal{E}$,  we denote the corresponding leaf of the center-stable manifold by $\widetilde{\omega}$.
Let $\pi_{\alpha(t)} : \widetilde{\Omega} \to \ell_{\alpha(t)}$ be the map mapping each center-stable manifold to the point of its intersection with
$\ell_{\alpha(t)}$.
Let us define $\Pi_{\alpha(t)} : \mathcal{E} \to \mathbb{R}$ by
\begin{equation*}
\Pi_{\alpha(t)}(\omega) = \ell_{\alpha(t)}^{-1} \circ \pi_{\alpha(t)}(\widetilde{\omega}).
\end{equation*}
Set $\nu_{\alpha(t)} = \Pi_{\alpha(t)}(\mu)$.
Note that the measure $\nu_{\alpha(t)}$ is a probability measure supported on the spectrum $\Sigma_{\alpha(t)}$.

For any $\omega \in \mathcal{E}$ and $t \in J$, if we have
$\pi_{\alpha(t)}(\widetilde{\omega}) \in S_{V}$ for some $V > 0$,
then we sometimes express this dependency explicitly and write this $V$ as
$V(\omega, t)$ below.

% Retaking the Markov partition, $\mathcal{K}_0$, and $\mathcal{U}$ if necessary, we have the following.

%\begin{lem}\label{lem:v-non-constant2}
%There exists $\delta > 0$ such that $\frac{d V(\omega, t) }{dt} > \delta$ for all $\omega \in \Sigma$ and $t \in J$.
%\end{lem}

Let us denote by $\Lyap^u(\mu_V)$ the unstable Lyapunov exponent of $f|_{S_V}$ with respect to the measure $\mu_V$. The following statement can be proved by applying the proofs of Propositions~3.8 and 3.9 of \cite{Damanik2013X} without modification.

%\begin{prop}
%For every $\epsilon > 0$, there exist $N_{0} \in \mathbb{Z}_{+}$ and a
%set $\Omega \subset \Sigma_{0}$ such that $\mu(\Omega) > 1 - \frac{\epsilon}{2}$ and
%for any $t \in I$, $\omega \in \Sigma_{0}$, and  $N \geq N_{0}$,
% we have
%\begin{equation*}
%\left|  \frac{d}{dt} \left( \frac{1}{N} \sum_{i=1}^{N}
%\log \left\| Df_{\lambda(\omega, t)} \left( f_{\lambda(\omega, t)}^{i} ( \pi_t(\widetilde{\omega}) \right) \right\| \right)   \right| >
%\frac{\delta}{2} > 0.
%\end{equation*}
%\end{prop}

%Using this Proposition, we also have the following:

\begin{prop}\label{key_prop}
Assume that for all $t\in J$ and $\omega \in \mathcal{E}$, $\abs{\frac{d}{dt} \Lyap^u(\mu_{V(\omega, t)})}\geq\delta > 0$. Then for every $\epsilon > 0$, there exist $N^{\ast} \in \mathbb{Z}_{+}$ and a set $\mathcal{E}_{0} \subset \mathcal{E}$
such that $\mu( \mathcal{E}_0 ) > 1 - \frac{\epsilon}{2}$, and such that for $t \in J$,
$\omega \in \mathcal{E}_{0}$, and $N \geq N^{*}$, we have
\begin{equation*}
\lim_{n \to \infty} \frac{1}{n} \log \left\| Df^{n}(\pi_{\alpha(t)}(\widetilde{\omega})) |_{\ell_{\alpha(t)}} \right\|
= \Lyap^{u}(\mu_{V(\omega, t)}),
\end{equation*}
and
\begin{equation*}
\left|  \frac{d}{d t} \left( \frac{1}{N} \log \left\| Df^{N}( \pi_{\alpha(t)}(\widetilde{\omega}) ) |_{\ell_{\alpha(t)}} \right\| \right)  \right|
> \frac{\delta}{4}.
\end{equation*}
\end{prop}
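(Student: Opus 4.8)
The plan is to reduce everything to the first derivative formula for the unstable Lyapunov exponent along the line $\ell_{\alpha(t)}$ and to a uniform hyperbolicity estimate, exactly as in \cite[Section~3]{Damanik2013X}. First I would recall that $f|_{S_V}$ is Axiom~A for $V>0$, so for each $\omega\in\mathcal{E}$ and each $t$ with $\pi_{\alpha(t)}(\widetilde\omega)\in S_{V}$, $V=V(\omega,t)>0$, the point $\pi_{\alpha(t)}(\widetilde\omega)$ lies on the center-stable manifold $\widetilde\omega$, and the line $\ell_{\alpha(t)}$ is uniformly transversal to the center-stable lamination by Theorem~\ref{thm:transversality}. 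Transversality guarantees that the one-dimensional subspace tangent to $\ell_{\alpha(t)}$ at $\pi_{\alpha(t)}(\widetilde\omega)$ has a nonzero component in the unstable direction of $f|_{S_{V}}$; hence the expansion rate of $Df^{n}$ restricted to $\ell_{\alpha(t)}$ at that point is asymptotically governed by the unstable multiplier, giving the first displayed limit, and the limit is precisely $\Lyap^{u}(\mu_{V(\omega,t)})$ because the forward orbit of $\pi_{\alpha(t)}(\widetilde\omega)$ is asymptotic (along the stable manifold) to a generic point for $\mu_{V}$, and the Lyapunov exponent is constant $\mu_V$-a.e.\ by ergodicity of the measure of maximal entropy. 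The uniformity of the transversality over the compact spectrum (again Theorem~\ref{thm:transversality}, together with compactness of $\Sigma_{\alpha(t)}$ and of $J$) is what makes the convergence uniform in $\omega\in\mathcal{E}_0$ and $t\in J$ once we excise a small-measure bad set $\mathcal{E}\setminus\mathcal{E}_0$ of $\omega$'s whose forward orbits spend too long near $S_0$ (where hyperbolicity degenerates); this is the standard Egorov-type / large-deviations argument from \cite[Proposition~3.8]{Damanik2013X}, which applies verbatim.

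Next I would handle the derivative-in-$t$ estimate. The key identity, established in \cite[Proposition~3.9]{Damanik2013X}, expresses $\frac{d}{dt}\bigl(\frac1N\log\|Df^{N}(\pi_{\alpha(t)}(\widetilde\omega))|_{\ell_{\alpha(t)}}\|\bigr)$ as $\frac{d}{dt}\Lyap^{u}(\mu_{V(\omega,t)})$ plus an error term that tends to $0$ as $N\to\infty$, uniformly over $\omega\in\mathcal{E}_0$ and $t\in J$. Here one uses that $V(\omega,t)=V_{\alpha(t)}(E)$ is an analytic function of $t$ (formula \eqref{eq:v}), that $\Lyap^{u}(\mu_V)$ is $C^1$ in $V$ on $(0,\infty)$, and that the center-stable manifolds, the map $\pi_{\alpha(t)}$, and the intersection point all vary smoothly in $t$ with uniformly bounded derivatives on $\mathcal{E}_0$ (this is where one needs $\mathcal{E}_0$ to avoid a neighborhood of $S_0$; on $\mathcal{E}_0$ the relevant dynamical objects are uniformly $C^{2}$, cf.\ \cite[Section B.1.2]{Yessen2011}). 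Given the hypothesis $\bigl|\frac{d}{dt}\Lyap^{u}(\mu_{V(\omega,t)})\bigr|\geq\delta$ for all $t\in J$, $\omega\in\mathcal{E}$, we choose $N^{*}$ so large that for $N\geq N^{*}$ the error term is smaller than $\delta/4$ uniformly on $\mathcal{E}_0\times J$; then the triangle inequality yields $\bigl|\frac{d}{dt}\bigl(\frac1N\log\|Df^{N}|_{\ell_{\alpha(t)}}\|\bigr)\bigr|>\delta-\delta/4>\delta/4$, as claimed. (One may shrink $\mathcal{E}_0$ once more, keeping $\mu(\mathcal{E}_0)>1-\epsilon/2$, to absorb any $\omega$-dependence hidden in $N^{*}$.)

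The main obstacle, as in \cite{Damanik2013X}, is the lack of hyperbolicity near $S_0$: there the unstable Lyapunov exponent degenerates and the $C^{2}$-norms of the center-stable manifolds blow up, so the error term in the derivative formula is not controlled near $V=0$. The resolution is the choice of $\mathcal{E}_0$: using the fact that $\widetilde\mu$ (and hence $\mu$) gives zero mass to the set of symbolic sequences whose orbits are trapped near $S_0$, together with a Borel--Cantelli / large-deviations estimate on the fraction of time a $\mu$-typical orbit spends in a fixed neighborhood of $S_0$, one removes a set of $\mu$-measure at most $\epsilon/2$ and obtains uniform hyperbolicity, and hence uniform control of all error terms, on the complement. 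Since the excerpt asserts that the proofs of \cite[Propositions~3.8 and 3.9]{Damanik2013X} apply without modification, the remaining work is purely bookkeeping: verifying that our transversality input (Theorem~\ref{thm:transversality}, valid now for all $\lambda\in\mathcal{R}\setminus\{(1,0)\}$) and our analytic family $\set{\ell_{\alpha(t)}}_{t\in J}$ satisfy the structural hypotheses under which those two propositions were proved. We omit the details, which are identical to those in \cite{Damanik2013X}.
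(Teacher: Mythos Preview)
Your proposal is correct and takes essentially the same approach as the paper, which simply states that the proposition follows by applying the proofs of \cite[Propositions~3.8 and 3.9]{Damanik2013X} without modification. Your write-up in fact gives more detail than the paper's one-line deferral, correctly identifying transversality (Theorem~\ref{thm:transversality}) as the structural input and the Egorov-type excision of a small-$\mu$-measure set as the mechanism for obtaining uniformity.
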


We can now prove
\begin{prop} \label{prop:dgs:acconv}
Let $\eta$ be a compactly supported Borel measure on $\mathbb{R}$ such that $\lhdim(\eta, x) \geq d_\eta$ for $\eta$-a.e.\ $x$. Assume that $d_{\eta} + \dim_{H} \nu_{\alpha(t)} > 1$ for all $t \in J$. Assume also that there exists $\delta > 0$ such that $\abs{\frac{d V(\omega, t) }{dt}} > \delta$ for all $\omega \in \mathcal{E}$ and $t \in J$. Then $\eta \ast \nu_{\alpha(t)}$ is absolutely continuous with respect to Lebesgue measure for a.e. $t \in J$.
\end{prop}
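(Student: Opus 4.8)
The goal is to verify the hypotheses of Proposition~\ref{prop:key_prop} for the family $\{\Pi_{\alpha(t)}\}_{t\in J}$, the measure $\mu$, and the external measure $\eta$, and then invoke that proposition to conclude $\eta * \nu_{\alpha(t)} \ll \mathcal{L}^1$ for a.e.\ $t \in J$. I would set $\mathcal{B}$ to be the alphabet of the one-sided topological Markov chain $\mathcal{E}^+$ (so $|\mathcal{B}|$ equals the number of Markov rectangles), with $\mathcal{E}$ the subsystem corresponding to $\widetilde{\Omega}$; the symbolic metric is comparable to $|\mathcal{B}|^{-|\omega\wedge\tau|}$ because $f|_{\Omega_V}$ is conjugate to $\sigma$ and the conjugacy is bi-Hölder (indeed bi-Lipschitz after passing to an adapted metric). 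The three combinatorial/metric conditions then unwind as follows.

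\textbf{Step 1: the contraction condition and the transversality/nondegeneracy condition.} For $\omega,\tau\in\mathcal{E}$ with long common prefix, the leaves $\widetilde\omega$ and $\widetilde\tau$ are close center-stable manifolds, and $\phi_{\omega,\tau}(t) = \Pi_{\alpha(t)}(\omega) - \Pi_{\alpha(t)}(\tau)$ measures the gap between their intersection points with $\ell_{\alpha(t)}$. By Theorem~\ref{thm:transversality} the intersections are transversal (for $\alpha(t)\neq(1,0)$), so this gap is comparable, up to angle factors bounded away from $0$ and $\infty$ on the compact parameter set, to the distance between the leaves measured along $\ell_{\alpha(t)}$; that distance contracts like the product of the first $|\omega\wedge\tau|$ unstable derivatives of $f$ along the orbit, giving the bound $\max_t|\phi_{\omega,\tau}(t)| \le C_1|\mathcal{B}|^{-\alpha|\omega\wedge\tau|}$ with $\alpha$ controlled by $\min_V \Lyap^u(\mu_V)/\log|\mathcal{B}|$ (uniform by continuity of the exponent in $V$ and compactness). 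For the transversality-in-$t$ condition, I would use Proposition~\ref{key_prop}: the hypothesis $|dV(\omega,t)/dt| > \delta$ together with the explicit monotone formula \eqref{eq:v} relating $V$ to the energy, and the fact that $\frac{1}{N}\log\|Df^N(\pi_{\alpha(t)}(\widetilde\omega))|_{\ell_{\alpha(t)}}\|$ has $t$-derivative bounded below, shows that as $t$ varies the point $\pi_{\alpha(t)}(\widetilde\omega)$ moves transversally through the lamination at a rate bounded below; hence $\phi_{\omega,\tau}$ is, on the relevant scale $r\sim|\mathcal{B}|^{-\alpha|\omega\wedge\tau|}$, monotone with derivative of size $\sim|\mathcal{B}|^{(\beta-\text{something})|\omega\wedge\tau|}$, yielding the sublevel-set bound $\mathcal{L}^1(\{t: |v+\phi_{\omega,\tau}(t)|\le r\}) \le C_2|\mathcal{B}|^{\beta|\omega\wedge\tau|}r$ with an appropriate $\beta$ (the derivative lower bound gives the upper bound on the sublevel set). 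This is exactly the content imported from Propositions~3.8--3.9 of \cite{Damanik2013X}.

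\textbf{Step 2: the measure bound and the dimension inequalities.} Since $\widetilde\mu$ is (the restriction and normalization of) the measure of maximal entropy for $\sigma$, cylinder masses satisfy $\mu([u]) \le C_3 |\mathcal{B}|^{-\gamma n}$ for $|u|=n$ with $\gamma$ related to the ratio of the topological entropy to $\log|\mathcal{B}|$; more precisely, by Proposition~\ref{prop:meas-proj}, $\nu_{\alpha(t)}$ is (proportional to) the density of states measure $dk_{\alpha(t)}$, and the exponent $\gamma$ can be taken close to $\hdim \nu_{\alpha(t)} \cdot \alpha^{-1}$-type quantities — in any case the proof of \cite[Propositions 3.8--3.9]{Damanik2013X} produces compatible $\alpha,\beta,\gamma$ with $d_\eta + \gamma/\beta > 1$ following from the assumption $d_\eta + \hdim\nu_{\alpha(t)} > 1$ (after choosing the Markov partition fine enough that $\gamma/\beta$ approximates $\hdim\nu_{\alpha(t)}$ and $\alpha\ge\beta$ so $d_\eta > (\beta-\gamma)/\alpha$ is automatic), and the remaining inequality $d_\eta > (\beta-\gamma)/\alpha$ holds since $\beta - \gamma$ can be made small. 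The hypothesis $\lhdim(\eta,x)\ge d_\eta$ $\eta$-a.e.\ is assumed verbatim, so \eqref{dimension} holds. One subtlety: the uniformity of all constants over $\mathcal{E}_0$ with $\mu(\mathcal{E}_0) > 1-\epsilon$ — near $S_0$ the hyperbolicity degenerates and the exponents blow up, so one discards a small-measure set of symbolic sequences whose orbit spends too long near $S_0$; this is precisely why $\mathcal{E}_0$ rather than all of $\mathcal{E}$ appears, and the Markov structure plus the fact that $\widetilde\mu$ gives zero mass to the stable manifold of $S_0$ make this controllable.

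\textbf{Step 3: conclusion and the main obstacle.} Having verified all hypotheses of Proposition~\ref{prop:key_prop} with $J$ the given interval, we conclude $\eta * \nu_{\alpha(t)} \ll \mathcal{L}^1$ for Lebesgue-a.e.\ $t\in J$, which is the assertion. I expect the main obstacle to be \emph{Step 2's uniformity}: matching up the five exponents $\alpha,\beta,\gamma$ (and the fact that they can be chosen so that both inequalities $d_\eta + \gamma/\beta > 1$ and $d_\eta > (\beta-\gamma)/\alpha$ hold simultaneously) requires that refining the Markov partition drives $\gamma/\beta \to \hdim\nu_{\alpha(t)}$ uniformly in $t\in J$ while keeping $\beta - \gamma$ bounded and $\alpha$ bounded below — this is the technical heart of \cite{Damanik2013X} and the reason the dimension hypothesis $d_\eta + \hdim\nu_{\alpha(t)} > 1$ (rather than merely $>$ some fixed constant) is exactly what is needed. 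A secondary obstacle is translating the transversality of $\ell_\lambda$ with the center-stable lamination (Theorem~\ref{thm:transversality}) — which is a statement about a single line — into uniform transversality over the whole analytic family $\{\ell_{\alpha(t)}\}_{t\in J}$ and uniformly over all leaves, but this follows from compactness of $J$, compactness of the spectra, and continuity of the lamination in the $C^1$ topology away from $S_0$ (with the near-$S_0$ leaves again handled by excising a small-$\mu$-measure set, using the quadratic tangency structure at $S_0$).
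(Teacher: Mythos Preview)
Your overall strategy matches the paper's: reduce to the abstract Proposition~\ref{prop:key_prop} by verifying its hypotheses via Proposition~\ref{key_prop}, following the template of \cite[Theorem~3.7]{Damanik2013X}. The structure of Steps~1--3 is right, and your identification of the ``main obstacle'' (matching the exponents $\alpha,\beta,\gamma$ to the dimension inequalities) is on target, though the paper defers all of that to the verbatim repetition of \cite{Damanik2013X}.

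There is, however, one genuine gap. The hypothesis of the proposition you are proving is $\bigl|\tfrac{d}{dt}V(\omega,t)\bigr| > \delta$, but the hypothesis of Proposition~\ref{key_prop} that you want to invoke is $\bigl|\tfrac{d}{dt}\Lyap^u(\mu_{V(\omega,t)})\bigr| \ge \delta' > 0$. By the chain rule,
\[
\frac{d}{dt}\Lyap^u\bigl(\mu_{V(\omega,t)}\bigr)
= \frac{d\,\Lyap^u(\mu_V)}{dV}\bigg|_{V=V(\omega,t)} \cdot \frac{dV(\omega,t)}{dt},
\]
and while the second factor is bounded away from zero by hypothesis, nothing you have said rules out $\tfrac{d}{dV}\Lyap^u(\mu_V) = 0$ at some value of $V$ hit by the family. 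In Step~1 you simply assert that the $t$-derivative of the finite-time Lyapunov quantity is ``bounded below,'' but this is the \emph{conclusion} of Proposition~\ref{key_prop}, not something you can feed into it. The paper closes this gap by observing that $V \mapsto \Lyap^u(\mu_V)$ is analytic (since $f|_{S_V}$ depends analytically on $V$), so its $V$-derivative vanishes only on a discrete set; composing with the strictly monotone $t \mapsto V(\omega,t)$ and using compactness of $\mathcal{E}$, one can restrict $t$ to small nondegenerate subintervals of $J$ on which the required lower bound holds uniformly in $\omega$, and then cover $J$ up to a null set by such subintervals. Once you add this reduction, your argument is complete and coincides with the paper's.
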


\begin{proof}
The proof is essentially the same as the proof of \cite[Theorem~3.7]{Damanik2013X}. We need to check that the conditions in Proposition~\ref{key_prop} are satisfied. According to the hypothesis of Proposition \ref{prop:dgs:acconv}, for all $\omega\in \mathcal{E}$, $V(\omega, t)$ is strictly monotone with uniformly (in $t$) nonzero derivative. On the other hand, $\mathrm{Lyap}^u(\mu_{V(\omega, t)})$ is analytic in $V$ (this follows from general principles, since $f_V = f|_{S_V}$ is analytic in $V$). It follows that for any given $\omega$, away from a set of nonaccumulating points $t$, the derivative $\frac{d}{dt}\mathrm{\Lyap}^u(\mu_{V(\omega, t)})$ is nonzero. By continuity, using compactness of $\mathcal{E}$, this can be ensured for all $\omega$ by restricting $t$ to a sufficiently small nontegenerate interval.

Let $\mathcal{E}_0 \subset \mathcal{E}$ and $N^{*} \in \mathbb{Z}_+$ be as in Proposition \ref{key_prop}.  Let us take $\omega, \omega' \in \mathcal{E}_0$ in such a way that $p(t) = \pi_{\alpha(t)}(\widetilde{\omega})$ and $q(t) = \pi_{\alpha(t)}(\widetilde{\omega}')$ are sufficiently close. Let $n \in \mathbb{Z}_+$ be such that the distance between $f^{n}(p)$ and $f^{n}(q)$ is of order one. Let us introduce coordinates on each curve $\ell_{\alpha(t)}$, $f(\ell_{\alpha(t)}), \cdots, f^{n}(\ell_{\alpha(t)})$, using the original parametrization and taking $p \in \ell_{\alpha(t)}$, $f(p) \in f(\ell_{\alpha(t)}), \cdots, f^{n}(p) \in f^{n}(\ell_{\alpha(t)})$
to be the origin. We consider the map $f^{-1} : f^{i}(\ell_{\alpha(t)}) \to f^{i-1}(\ell_{\alpha(t)})$ in these coordinates, and write the resulting map by $k^{(i)}_{\alpha(t)} : \mathbb{R} \to \mathbb{R}$. Let $l^{(i)} = \frac{\partial k^{(i)}_{\alpha(t)}}{\partial x} (0)$.

Then, by the Lemma 3.11 of \cite{Damanik2013X} and by the Proposition \ref{key_prop}, we have
\begin{equation*}
\frac{d}{dt} \dist(p(t), q(t)) \leq \left( \prod_{s=1}^{n} l^{(s)} \right)(C - \delta'n)
\end{equation*}
for some $C, \delta' > 0$. The rest of the argument is a verbatim repetition of the proof of \cite[Theorem~3.7]{Damanik2013X} with our Proposition~\ref{prop:key_prop} in place of \cite[Proposition~2.3]{Damanik2013X} and Proposition~\ref{key_prop} in place of \cite[Propositions~3.8 and 3.9]{Damanik2013X}.

\end{proof}

We now prove the following measure-theoretic analog of Lemma \ref{lem:app-4}.
\begin{lem}\label{lem:Jake}
For every $\Delta, \delta \in (0, 1)$ there exists a nonempty open set $\mathcal{U} \subset \mathcal{R}$ such that we have the following for every $\lambda \in \mathcal{U}$.

\begin{enumerate}\itemsep0.5em

\item[\textup{(1)}] There exists $E_0\in\Sigma_\lambda$ such that $\lhdim(dk_\lambda, E) > \Delta$ for all $E \in \Sigma_\lambda$ in a sufficiently small neighborhood of $E_0$  {\rm(}where $\lhdim(dk_\lambda, E_0)$ is as defined in \eqref{dimension} with $dk_\lambda$ and $E_0$ in place of $\eta$ and $x${\rm)}.

\item[\textup{(2)}] At one of the extrema of $\Sigma_\lambda$, $E_\mathrm{end}$, we have $\lhdim(\Sigma_\lambda, E_\mathrm{end}) < \delta$.
\end{enumerate}
\end{lem}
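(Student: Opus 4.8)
The plan is to obtain Lemma~\ref{lem:Jake} by the same mechanism as Lemma~\ref{lem:app-4}, replacing the thickness estimate there with an estimate on the local exponent of the density of states measure $dk_\lambda$. The key geometric input is that, near $S_0$, the nonwandering sets $\Omega_V$ have Hausdorff dimension tending to $1$ (equivalently, $\hdim(\Omega_V)\to 1$ as $V\downarrow 0$, by \cite{Damanik2008, Damanik2010}), and moreover the measure of maximal entropy $\tilde\mu_V$ carries full dimension in the limit because $f|_{\mathbb S}$ is a factor of the hyperbolic toral automorphism $\mathcal A$ and the Markov-partition element $\mathcal K_0$ projects onto a piece of $\mathbb S$ on which $F$ is smooth with bounded distortion. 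Via Proposition~\ref{prop:meas-proj}, the pushforward of $\widetilde\mu$ to $\ell_\lambda$ is (proportional to) $dk_\lambda$, so transversality of $\ell_\lambda$ with the center-stable lamination (Theorem~\ref{thm:transversality}) transfers dimension estimates for $\tilde\mu_{V(\omega,t)}$ on $\Omega_{V(\omega,t)}$ to lower bounds for $\lhdim(dk_\lambda,E)$ near the relevant energy.

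The steps, in order: (i) Start from the configuration produced in the proof of Lemma~\ref{lem:app-3}: pick $\mathfrak p_0$ with $|\mathfrak p_0|>\Delta'$ (for a large auxiliary constant $\Delta'$) and a suitable $\mathfrak q_0$ so that $\ell_{(\mathfrak p_0,\mathfrak q_0)}$ meets $W^{cs}(P_1)$ along the branch $W$ of the strong-stable manifold of $P_1=(1,1,1)$, and then perturb $\mathfrak q_0$ to $\mathfrak q_0+\delta$ so that the intersection occurs on $S_{V_\delta}$ with $V_\delta\downarrow 0$. (ii) Fix $E_0=E_\delta\in\Sigma_{(\mathfrak p_0,\mathfrak q_0+\delta)}$ the corresponding energy; then for $E$ in a small neighborhood of $E_0$ one has $V_\lambda(E)$ close to $V_\delta$, hence small, by the explicit monotone formula \eqref{eq:v}. (iii) Use $\lhdim(dk_\lambda,E)=\tfrac12\,\dim(\tilde\mu_{V_\lambda(E)})$ — the measure-theoretic counterpart of the identity $\hdim^{\loc}(\Sigma_\lambda,E)=\tfrac12\hdim(\Omega_{V_\lambda(E)})$ used in Lemma~\ref{lem:app-4} — which holds because the projection along the transversal $\ell_\lambda$ is bi-Lipschitz onto its image on each leaf of the foliation by the $S_V$'s (transversality + \cite[Proposition~3.2]{Yessen2011a}), and $dk_\lambda$ is the projection of $\widetilde\mu$. (iv) Invoke $\dim(\tilde\mu_V)\to 1$ as $V\downarrow 0$ (from the toral-automorphism factor structure: the measure of maximal entropy for $\mathcal A$ is Lebesgue, of dimension $2$, and $F$ restricted to $\mathcal K$ has bounded distortion, so $\dim(\tilde\mu_0)=\dim\mathbb S=2$, and continuity of $\dim(\tilde\mu_V)$ in $V$ — see \cite{Damanik2008} — gives the limit) to conclude $\lhdim(dk_\lambda,E)>\Delta$ throughout a neighborhood of $E_0$ once $\delta$ is small. (v) For part (2), reuse verbatim the argument of Lemma~\ref{lem:app-4}(2): by \eqref{eq:v} $V_\lambda$ is monotone in $E$, so one of the extrema $E_{\mathrm{end}}$ of $\Sigma_\lambda$ realizes $V_\lambda(E_{\mathrm{end}})$ as large as we please by taking $\|H_\lambda\|$, hence $\max\{|\mathfrak p|,|\mathfrak q|\}$, large; then $\lhdim(\Sigma_\lambda,E_{\mathrm{end}})=\tfrac12\hdim(\Omega_{V_\lambda(E_{\mathrm{end}})})<\delta$ by $\lim_{V\to\infty}\hdim(\Omega_V)=0$ from \cite{Damanik2008}. (vi) Finally, check that all these properties are open: $V_\lambda(E)$ depends continuously on $\lambda$ and $E$, the transversal intersections are uniformly transversal on compacta, $\dim(\tilde\mu_V)$ and $\hdim(\Omega_V)$ are continuous in $V$, and $\Sigma_\lambda$ varies continuously in the Hausdorff metric; so $\mathcal U$ can be taken to be a small neighborhood of $(\mathfrak p_0,\mathfrak q_0+\delta)$, intersected with a region where $\max\{|\mathfrak p|,|\mathfrak q|\}$ is large enough for (2). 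As in Remark~\ref{rem:extremum}, one can further arrange $E_0$ to be the opposite extremum of $\Sigma_\lambda$ by ensuring $\ell_\lambda(E_0)\notin S_0$.

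The main obstacle — and the only genuinely new point compared to Lemma~\ref{lem:app-4} — is establishing that $\dim(\tilde\mu_V)\to 1$ as $V\downarrow 0$ (after the factor of $\tfrac12$), i.e.\ that the measure of maximal entropy, not merely the ambient nonwandering set, has full local dimension near $S_0$; equivalently that the stable and unstable local dimensions of $\tilde\mu_V$ both tend to $\tfrac12$. This should follow from the factor structure \eqref{eq:anosov}--\eqref{eq:sconj}: on $\mathbb S$ the measure of maximal entropy pushes forward from Haar measure on $\mathbb T^2$, whose local dimension along $\mathcal K_0$ is $2$ away from the finitely many critical points of $F$, and then one uses continuity of the entropy and of the Lyapunov exponents of $f|_{S_V}$ with respect to $\tilde\mu_V$ in $V$ (real-analyticity of $f_V$ in $V$ and of the pressure function), together with the Ledrappier--Young / Manning-type formula $\dim(\tilde\mu_V)=h_{\tilde\mu_V}(f_V)\big(\tfrac1{\Lyap^u(\tilde\mu_V)}+\tfrac1{|\Lyap^s(\tilde\mu_V)|}\big)$ for this two-dimensional Axiom~A system. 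One must be slightly careful that the relevant energies $E$ near $E_0$ correspond to points on $\Omega_{V}$ lying in $\mathcal K_V$ (so that Proposition~\ref{prop:meas-proj} applies with the normalized restriction $\widetilde\mu$), which is guaranteed by choosing, as in the statement of Proposition~\ref{prop:meas-proj}, $\mathcal K$ to be the Markov element containing $[0,1/2]\times\{0\}$ and tracking that $\ell_{(\mathfrak p_0,\mathfrak q_0)}\cap W$ lies in $\mathcal K_0$; this is a matter of bookkeeping with the Markov partition and can be arranged by the freedom in choosing $\mathfrak p_0,\mathfrak q_0$ in Lemma~\ref{lem:app-1}.
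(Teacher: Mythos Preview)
Your approach is essentially the paper's: part~(2) is reused verbatim from Lemma~\ref{lem:app-4}, and part~(1) is obtained from the small-$V$ behavior of the local dimension of $dk_\lambda$ via the same geometric configuration as in Lemma~\ref{lem:app-3}, with openness handled by continuity. The paper packages the key input for (1) as Claim~\ref{c:Jake-2}, which it obtains by citing \cite[Proposition~3.15]{Yessen2011a} for the a.e.\ existence of $\lhdim(dk_\lambda,E)$ and then applying the proof of \cite[Theorem~2.6]{Yessen2011a} in localized form; your steps (iii)--(iv) are a sketch of precisely that argument via the Manning/Ledrappier--Young formula and continuity of entropy and Lyapunov exponents in $V$.

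There is, however, a numerical slip in step~(iv) that would, taken literally, break the argument: you write $\dim(\tilde\mu_V)\to 1$ and that the stable and unstable local dimensions of $\tilde\mu_V$ tend to $\tfrac12$. The correct limits are $\dim(\tilde\mu_V)\to 2$ and stable/unstable dimensions $\to 1$ --- as you yourself note a few lines later when computing $\dim(\tilde\mu_0)=\dim\mathbb S=2$. With your formula $\lhdim(dk_\lambda,E)=\tfrac12\dim(\tilde\mu_{V_\lambda(E)})$, a limit of $1$ for $\dim(\tilde\mu_V)$ would only yield $\lhdim(dk_\lambda,E)\to\tfrac12$, which is insufficient for the range $\Delta>\tfrac12$ actually needed in the proof of Theorem~\ref{thm:thm1}. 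Once this is corrected to $\dim(\tilde\mu_V)\to 2$, your argument goes through and coincides with the paper's.
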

\begin{proof}

First, the existence of $\lhdim(dk_\lambda, E)$ for $dk_\lambda$-a.e. $E\in \Sigma_\lambda$, that is, the existence of the limit in \eqref{dimension}, is proved in \cite[Proposition~3.15]{Yessen2011a}. We can now apply the proof of \cite[Theorem~2.6]{Yessen2011a} without modification to obtain the following (in \cite{Yessen2011a} the proof is given for the entire set $\Sigma_\lambda$ with $\lambda$ close to $(1, 0)$; however, the same proof gives the following local result).
\begin{claim}\label{c:Jake-2}
For all $\Delta \in (0,1)$ there exists $\epsilon > 0$ such that for all $\lambda\in \mathcal{R}$, if $E_0\in\Sigma_\lambda$ is such that $0 < I(\ell_\lambda(E_0)) = V_\lambda(E_0) < \epsilon$ {\rm(}with $V_\lambda$ as in \eqref{eq:v}{\rm)}, then $\lhdim(dk_\lambda, E) > \Delta$ for $dk_\lambda$-a.e. $E$ in a sufficiently small neighborhood of $E_0$ {\rm(}any open neighborhood of $E_0$ is necessarily of nonzero $dk_\lambda$ measure{\rm)}.
\end{claim}

At the same time, just as in the proof of Lemma~\ref{lem:app-4}, we know that there exists $C > 0$ such that if $\lambda \in \mathcal{R}$ such that for one of the extrema of $\Sigma_\lambda$, $E_\mathrm{end}$, $V_\lambda(E_\mathrm{end})> C$, then $\lhdim(\Sigma_\lambda, E_\mathrm{end}) < \delta$.

Now, let $\epsilon > 0$ as in Claim~\ref{c:Jake-2} and $C > 0$ as in the preceding paragraph. Just as in the proof of Lemma \ref{lem:app-4}, let $\mathcal{U}\subset\mathcal{R}$ be an open set such that for all $\lambda\in\mathcal{U}$, there exists $E_0\in\Sigma_\lambda$ and one of the extrema $E_\mathrm{end}$ of $\Sigma_\lambda$ satisfying $V_\lambda(E_0)\in(0, \epsilon)$ and $V_\lambda(E_\mathrm{end}) > C$.
\end{proof}

\begin{rem} Just as in Lemma \ref{lem:app-4}, $\mathcal{U}$ can be adjusted so that $E_0$ can be taken to be the other extremum of the spectrum (see Remark~\ref{rem:extremum}).
\end{rem}

In what follows, we fix a nonempty open set $\mathcal{U}\subset \mathcal{R}$ as in Lemma~\ref{lem:Jake} with $\Delta > \frac{1}{2}$, and so that $(1, 0)\notin \mathcal{U}$. Pick and fix $\lambda \in \mathcal{U}$. We see that for every $\tilde{\lambda} \in \mathcal{U}$, the singular continuous component of $dk_{\lambda} * dk_{\tilde{\lambda}}$ is nonzero. Indeed, since by Lemma~\ref{lem:Jake} (see also Lemma~\ref{lem:app-4}), $\Sigma_{(\lambda, \tilde{\lambda})}^2$ contains a Cantor set $\mathcal{C}$ of zero Lebesgue measure which is the sum of Cantor subsets of $\Sigma_{\lambda}$ and $\Sigma_{\tilde{\lambda}}$ of nonzero $dk_{\lambda}$ and $dk_{\tilde{\lambda}}$ measure, respectively, we have $(dk_{\lambda} * dk_{\tilde{\lambda}})(\mathcal{C}) > 0$.

Now fix any $\tilde{\lambda} \in \mathcal{U}$. Without loss of generality, let us assume that $\mathcal{U}$ is an open ball in $\mathcal{R}$. Let $\alpha: [0, 1]\rightarrow \mathcal{R}$ be an analytic curve with the following properties.
\it
\begin{enumerate}\itemsep0.5em

\item[\textup{(1)}] $\alpha(0) = (1, 0)$ and for all $t\in (0, 1]$, $\alpha(t)\neq (1, 0)$.

\item[\textup{(2)}] $\alpha(1) = \lambda$.

\item[\textup{(3)}] For some $t_0 \in (0, 1)$, $\alpha(t_0) = \tilde{\lambda}$.

\item[\textup{(4)}] The arc $\alpha([t_0, 1])$ is contained entirely in $\mathcal{U}$.
\end{enumerate}
\rm

Now take $E \in \Sigma_{\alpha(0)}$ and its continuation $E(t)$ along $t \in (0, 1]$. Appealing again to the analyticity of the center-stable manifolds (proof of Theorem~\ref{thm:transversality}) and to transversality (Theorem~\ref{thm:transversality}), we have that $E(t)$ is analytic on $(0, 1]$; therefore, $V_{\alpha(t)}(E(t))$ is also analytic. This implies

\begin{lem}\label{lem:v-non-constant}
Let $E(t)$ be as above, and assume in addition that for all $t$, $E(t)$ is not an extremum of $\Sigma_{\alpha(t)}$. Then $V_{\alpha(t)}'(E(t)) = \frac{dV_{\alpha(t)}(E(t))}{dt}$ admits at most finitely many zeros in the interval $[t_0, 1]$.
\end{lem}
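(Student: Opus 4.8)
The plan is to use analyticity to reduce the claim to showing that $t\mapsto V_{\alpha(t)}(E(t))$ is nonconstant on $(0,1]$, and then to rule out the constant case (which is necessarily the zero constant) using the structure of the Fricke--Vogt invariant along the spectrum.

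\textbf{Reduction via analyticity.} As noted just above the lemma, $\alpha$ is analytic on $[0,1]$ and $t\mapsto E(t)$ is analytic on $(0,1]$. By \eqref{eq:v}, $V_\lambda(E)$ is a rational function of $(\mathfrak p,\mathfrak q,E)$ whose denominator $4\mathfrak p^2$ does not vanish on $\mathcal R$, so $g(t)\eqdef V_{\alpha(t)}(E(t))$, and hence $g'(t)=V_{\alpha(t)}'(E(t))$, are analytic on $(0,1]$. Since $[t_0,1]\subset(0,1]$ is compact, if $g'$ vanished at infinitely many points of $[t_0,1]$ its zeros would accumulate at a point of $(0,1]$, forcing $g'\equiv 0$ on $(0,1]$ by the identity theorem, i.e.\ $g$ constant on $(0,1]$. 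Hence it suffices to prove $g$ is nonconstant.

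\textbf{Ruling out a constant $g$.} Suppose $g\equiv c$ on $(0,1]$. As $t\downarrow 0$ we have $\alpha(t)\to(1,0)$, so $\mathfrak p(t)\to 1$ and $\mathfrak q(t)\to 0$; moreover $E(t)\in\Sigma_{\alpha(t)}$ stays in a bounded set, since $\norm{H_{\alpha(t)}}$ (of order $\max\set{\abs{\mathfrak p(t)},\abs{\mathfrak q(t)}}$) is bounded near $(1,0)$. Substituting into \eqref{eq:v}, the numerator tends to $0$ and the denominator to $4$, so $c=\lim_{t\downarrow 0}g(t)=0$; thus $V_{\alpha(t)}(E(t))=0$ for every $t\in(0,1]$. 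Now fix $t\in(0,1]$ and write $\alpha(t)=(\mathfrak p,\mathfrak q)$, so $\mathfrak p\neq 0$ and, since $\alpha(t)\neq(1,0)$ by construction, $(\mathfrak p,\mathfrak q)\neq(1,0)$. If $\mathfrak q(\mathfrak p^2-1)\neq 0$, then by \eqref{eq:v} the map $E\mapsto V_{\alpha(t)}(E)$ is affine and strictly monotone; since every $E\in\Sigma_{\alpha(t)}$ satisfies $V_{\alpha(t)}(E)\ge 0$ (recall that $E\in\Sigma_{\alpha(t)}$ forces $\ell_{\alpha(t)}(E)\in S_V$ for some $V\ge 0$) and $\Sigma_{\alpha(t)}$ is compact, the value $V_{\alpha(t)}(E(t))=0$ is the minimum of $V_{\alpha(t)}$ over $\Sigma_{\alpha(t)}$, attained at the unique extremum of $\Sigma_{\alpha(t)}$ selected by the monotonicity; this contradicts the hypothesis that $E(t)$ is not an extremum. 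If instead $\mathfrak q(\mathfrak p^2-1)=0$, then by \eqref{eq:v} $V_{\alpha(t)}$ is the constant $\frac{\mathfrak q^2+(\mathfrak p^2-1)^2}{4\mathfrak p^2}$, which equals $0$ only if $\mathfrak q=0$ and $\mathfrak p^2=1$; ruling out $(1,0)$, this forces $\alpha(t)=(-1,0)$. As $t\in(0,1]$ was arbitrary, we would obtain $\alpha\equiv(-1,0)$ on $(0,1]$, contradicting $\alpha(0)=(1,0)$ by continuity. In every case we reach a contradiction, so $g$ is nonconstant, which completes the proof.

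\textbf{Where the difficulty lies.} The analyticity reduction is routine. The substantive point is the last step: one must leverage $V_{\alpha(t)}\ge 0$ on $\Sigma_{\alpha(t)}$ together with the affine, strictly monotone form of \eqref{eq:v} to convert ``$V_{\alpha(t)}(E(t))=0$'' into ``$E(t)$ is an extremum of $\Sigma_{\alpha(t)}$'' (this is precisely why the hypothesis excluding extrema is needed; compare Lemma~\ref{lem:app-1}), and then separately dispose of the exceptional parameter locus $\set{\mathfrak q=0}\cup\set{\mathfrak p^2=1}$, on which $V_{\alpha(t)}$ degenerates to a constant, using that $\alpha$ terminates at $(1,0)$. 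This is the only place where any care is required.
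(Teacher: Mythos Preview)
Your proof is correct and uses the same ingredients as the paper's (analyticity reduction, the formula \eqref{eq:v}, $V_\lambda\ge 0$ on $\Sigma_\lambda$, and uniform boundedness of $\Sigma_{\alpha(t)}$), only with the steps reversed: the paper first uses non-extremum plus monotonicity to force the constant to be strictly positive and then invokes boundedness to get $|E(t)|\to\infty$, whereas you first use boundedness to force the constant to be zero and then invoke monotonicity to force $E(t)$ to be an extremum. Your handling of the degenerate locus $\{\mathfrak q(\mathfrak p^2-1)=0\}$ is more explicit than the paper's, but otherwise the arguments are essentially the same.
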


\begin{proof}
Assume for the sake of contradiction that $V'$ admits infinitely many zeros in $[t_0, 1]$. By analyticity, this means that $V_{\alpha(t)}(E(t))$ is constant on $[t_0, 1]$. Using analyticity again, it then must be constant on $(0, 1]$. Since $E(t)$ is not an extremum of $\Sigma_{\alpha(t)}$, we have $V_{\alpha(t)}(E(t)) > 0$ by monotonicity of $V_{\alpha(t)}$ in $E$, which we know from \eqref{eq:v}. Consequently, $\abs{E(t)}\rightarrow \infty$ as $t\rightarrow 0$, since $\ell_{\alpha(t)}\rightarrow \ell_{\alpha(0)}$ in the $C^1$ topology as $t\rightarrow 0$, and $\ell_{\alpha(0)}$ lies in $S_0$. On the other hand, by compactness of $[0, 1]$ and continuity (in operator norm topology) of the map $t \mapsto H_{\alpha(t)}$, $\Sigma_{\alpha(t)}$ is uniformly bounded for $t\in [0, 1]$.
\end{proof}

\begin{proof}[Proof of Theorem~\ref{thm:thm1}]
Now with $E_0 \in \Sigma_{\lambda}$ as in Lemma~\ref{lem:Jake}, take $E\in \Sigma_{\lambda}$ in a neighborhood of $E_0$ so that $E$ is not an extremum of $\Sigma_{\lambda}$ and (1) of Lemma~\ref{lem:Jake} is satisfied. By Lemma~\ref{lem:v-non-constant}, there exist finitely many points $\set{p_1, \dots, p_n}\subset [t_0, 1]$ such that $[t_0, 1]\setminus\set{p_1, \dots, p_n}$ can be written as a union of compact nondegenerate intervals $\set{J_n}_{n\in\N}$ such that there exists a sequence $\set{\delta_n > 0}_{n\in\N}$ such that for all $t\in J_n$, $V'_{\alpha(t)}(E(t)) > \delta_n$. By continuity it follows that for all $n$ there exist $E_n^+, E_n^- \in \Sigma_{\lambda}$ with $E \in (E_n^+, E_n^-)$, such that for all $t\in J_n$ and $\tilde{E}\in \Sigma_{\lambda}\cap [E_n^-, E_n^+]$, $\tilde{E}(t) \in [E_n^-(t), E_n^+(t)]$ and $V'_{\alpha(t)}(\tilde{E}(t)) > \delta_n$ (here $\tilde{E}(t)$ and $E_n^\pm(t)$ are defined similarly to $E(t)$ as the continuations of $\tilde{E}$ and $E_n^\pm$, respectively).

Now let us replace $\mathcal{E}$ in Proposition~\ref{key_prop} with $\mathcal{E}_n \subset \mathcal{E}$ such that $\Pi_{\alpha(1)}(\mathcal{E}_n)\subset \Sigma_{\lambda}\cap [E_n^-, E_n^+]$ so that $\Pi_{\alpha(t)}(\mathcal{E}_n)\subset \Sigma_{\alpha(t)}\cap [E_n^-(t), E_n^+(t)]$ and $dk_{\alpha(t)}(\Pi_{\alpha(t)}(\mathcal{E}_n)) > 0$, $J$ with $J_n$, $\delta$ with $\delta_n$, $\eta$ with $dk_{\lambda}$, and $\nu_{\alpha(t)}$ with $dk_{\alpha(t)}$ restricted to $[E_n^+(t), E_n^-(t)]$ and normalized. We can do this by taking a sufficiently fine refinement of the original Markov partition so that $\nu_{\alpha(t)}$ is a normalized restriction of $dk_{\alpha(t)}$ to the intersection of $\Sigma_{\alpha(t)}$ with a compact interval of nonzero $dk_{\alpha(t)}$ measure. Then from Proposition \ref{prop:dgs:acconv} we have $dk_{\lambda} * dk_{\alpha(t)} \ll \mathcal{L}^1$ for Lebesgue almost every $t \in J_n$. Since this holds for every analytic curve $\alpha$ which satisfies items (1)--(4), Theorem~\ref{thm:thm1} follows.
\end{proof}

\begin{rem}\label{rem:thm2-extension}
It is now easy to see, combining Lemmas \ref{lem:Jake} and \ref{lem:app-4} that there exists a nonempty open set $\mathcal{U}\subset \mathcal{R}$ that satisfies the conclusions of both Theorem \ref{thm:thm2} and Theorem \ref{thm:thm1}.
\end{rem}

\subsection{Proof of Theorem \ref{thm:thm3}}\label{sec:proof-thm3}

Damanik, Fillman, and Gorodetski have computed the curve of initial conditions for this version of the continuum Fibonacci Hamiltonian in \cite{DFG2014}.

% For $0 < E < \lambda$, we have
% \begin{align}\label{eq:curve-continuum2}
% \begin{split}
% x(E)
% & =
% \cos\sqrt E, \hspace{5mm}
% y(E)
% =
% \cosh\sqrt{\lambda - E}, \\
% z(E)
% & =
% \cos\sqrt E \cosh \sqrt{\lambda - E}
% +
% \frac{1}{2} \left( \sqrt{\frac{\lambda - E}{E}} - \sqrt{\frac{E}{\lambda - E}} \right)
% \sin\sqrt E \sinh \sqrt{\lambda - E}.
% \end{split}
% \end{align}
% For $E > \lambda$, we have
\begin{align}\label{eq:curve-continuum}
\begin{split}
x(E)
& =
\cos\sqrt E, \hspace{5mm}
y(E)
=
\cos\sqrt{E-\lambda}, \\
z(E)
& =
\cos\sqrt E \cos \sqrt{E-\lambda}
-\frac{1}{2} \left( \sqrt{\frac{E}{E-\lambda}} + \sqrt{\frac{E-\lambda}{E}} \right)
\sin\sqrt E \sin \sqrt{E-\lambda}.
\end{split}
\end{align}
In particular, $\ell_\lambda(E) = (x(E), y(E), z(E))$ is an analytic curve in $\R^3$ and $\C^3$ both, in $E$ and in $\lambda$, for each $\lambda > 0$. Using the expressions for $x$, $y$, and $z$, we can compute the Fricke-Vogt invariant as a funciton of $E$. We get
% For $0 < E < \lambda$, one has
% \begin{equation}\label{eq:cont:invariant}
% I(E,\lambda)
% =
% I(x(E), y(E), z(E))
% =
% \frac{\lambda^2}{4E(\lambda - E)} \sin^2 \sqrt E \sinh^2 \sqrt{\lambda - E}.
% \end{equation}
% For $E > \lambda$, one has
\begin{equation}\label{eq:cont:invariant}
I(E,\lambda)
\eqdef
I(x(E), y(E), z(E))
=
\frac{\lambda^2}{4E(E-\lambda)} \sin^2 \sqrt E \sin^2 \sqrt{E-\lambda}.
\end{equation}
Notice that the expressions in \eqref{eq:curve-continuum} and \eqref{eq:cont:invariant} are analytic in $E$ and in $\lambda$, with removable singularities at $E = 0$ and $E = \lambda$. Note also that for all $\lambda$, $I(E, \lambda) \to 0$ as $E \to \infty$, so the analytic curve in \eqref{eq:curve-continuum} approaches the Cayley cubic in the high-energy regime. As before, the proof of Theorem~\ref{thm:thm3} relies on two pieces:
\begin{enumerate}\itemsep0.5em
\item Near the bottom, the spectrum is thin in the sense of Hausdorff dimension for large enough coupling.
\item For any fixed coupling, the spectrum has large local thickness at sufficiently high energies.
\end{enumerate}

We can use \cite{DFG2014} to control the Hausdorff dimension of the spectrum near the bottom. First, we need some control on the ground state energy.

\begin{lem} \label{lem:ground}
Denote $E_0(\lambda) = \inf(\Sigma_\lambda)$. There exists a constant $C \in (0, 3)$ such that $0 \le E_0(\lambda) \le C$ for all $\lambda \ge 0$.
\end{lem}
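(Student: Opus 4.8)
The plan is to bound the bottom of the spectrum $E_0(\lambda) = \inf(\Sigma_\lambda)$ by purely variational (Rayleigh quotient) arguments, together with the known structure of $\Sigma_\lambda$ in terms of the trace-map dynamics. For the lower bound $E_0(\lambda) \ge 0$: the potential $V_\lambda = \sum_n \lambda\omega_n\chi_{[n,n+1)}$ is nonnegative since $\lambda > 0$ and $\omega_n \in \{0,1\}$, so $H_\lambda = -\phi'' + V_\lambda \ge -\phi''\ge 0$ as quadratic forms on $L^2(\R)$; hence $\Sigma_\lambda \subset [0,\infty)$. That takes care of $0 \le E_0(\lambda)$ with nothing further to prove.

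For the upper bound $E_0(\lambda) \le C$ with $C < 3$ independent of $\lambda$, the first idea to try is a direct trial-function estimate, but this does \emph{not} work uniformly in $\lambda$: any fixed compactly supported $\phi$ will eventually sit partly over the bumps of height $\lambda$, and $\langle \phi, H_\lambda \phi\rangle/\|\phi\|^2$ grows with $\lambda$. So instead I would exploit the Fibonacci combinatorial structure: the Sturmian sequence $\{\omega_n\}$ contains arbitrarily long runs of $0$'s (indeed the letter $0$ appears in blocks whose lengths are unbounded, since the square/cube of the substitution word realizes long strings with only isolated $1$'s, and more to the point there are arbitrarily long factors equal to $0^k$ — this is standard for Sturmian sequences of irrational slope). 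On such a run of length $k$, the potential vanishes on an interval of length $k$, so one can place a Dirichlet trial function (e.g. $\sin(\pi x/k)$ supported on that interval) giving $\langle \phi, H_\lambda\phi\rangle/\|\phi\|^2 = \pi^2/k^2 \to 0$. Combined with the fact that $\theta$ can be chosen arbitrarily (and $\Sigma_\lambda$ is independent of $\theta$), or better, using that $E_0$ is an approximate eigenvalue, this shows $E_0(\lambda)$ is in fact small; but we only need $E_0(\lambda) < 3$, so even a single run of $0$'s of length $\ge 2$ suffices: a trial function supported where the potential vanishes gives Rayleigh quotient at most $\pi^2/4 < 3$. Since the Fibonacci word has the factor $00$ (equivalently $V_\lambda$ vanishes on some interval of length $2$ for a suitable $\theta$, hence — by $\theta$-independence of the spectrum and minimality of the subshift — for the spectral bottom this estimate transfers), we get $E_0(\lambda) \le \pi^2/4 < 3$ uniformly in $\lambda$.

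Let me be more careful about the $\theta$-dependence, since $H_\lambda$ depends on $\theta$ even though $\Sigma_\lambda$ does not. The clean way: fix $\theta$ so that $V_\lambda$ vanishes on $[a, a+2)$ for some $a$ (possible because the factor $00$ occurs in the Sturmian sequence for every slope, in particular $\alpha$); then the trial function $\phi = \sin(\pi(x-a)/2)\chi_{[a,a+2)}$ is in the form domain, $\|\phi\|^2 = 1$, and $\langle\phi, H_\lambda\phi\rangle = \int_a^{a+2}|\phi'|^2 = \pi^2/4$. Therefore $\inf\Sigma_\lambda^{(\theta)} \le \pi^2/4$. Since $\Sigma_\lambda^{(\theta)} = \Sigma_\lambda$ is independent of $\theta$, we conclude $E_0(\lambda) \le \pi^2/4 < 3$. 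Setting, say, $C = \pi^2/4$ (or any constant in $(\pi^2/4, 3)$) finishes the proof, with $C \in (0,3)$ as required.

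The main obstacle — really the only subtlety — is the bookkeeping around $\theta$: one must make sure that $V_\lambda$ genuinely has a length-$2$ (or longer) interval of zeros for an admissible $\theta$, and invoke the known $\theta$-independence of $\Sigma_\lambda$ (from \cite{DFG2014, LenzSeifStoll2014}, already quoted in the excerpt) to push the trial-function bound for one $\theta$ to the common spectrum. Everything else is a one-line Rayleigh quotient computation. A cosmetic alternative that avoids even mentioning $\theta$: note $\omega_n = 1$ has density $\alpha < 1$, so for some $n$ one has $\omega_n = \omega_{n+1} = 0$ for the standard $\theta = 0$ normalization used in the paper — then no transfer argument is needed at all. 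I would use whichever of these is cleanest given the paper's conventions; the quantitative content is identical.
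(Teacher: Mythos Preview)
Your approach is essentially the same as the paper's: both obtain the lower bound from $V_\lambda \ge 0$ and the upper bound via a Rayleigh-quotient trial function supported on an interval of length two where $V_\lambda$ vanishes. You take the explicit Dirichlet eigenfunction $\sin(\pi(x-a)/2)$ to get the clean value $\pi^2/4$, whereas the paper uses a smooth approximation to the square root of a tent function; this difference is purely cosmetic. Your discussion of the $\theta$-independence is more explicit than the paper's but leads to the same conclusion.

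There is, however, a combinatorial slip that your proposal shares with the paper's own proof. Under the coding $\omega_n = \chi_{[1-\alpha,1)}(n\alpha + \theta \bmod 1)$ with $\alpha = \frac{\sqrt 5 - 1}{2} > \frac12$, the symbol $1$ has density $\alpha$ and the symbol $0$ is the \emph{rarer} letter; it is isolated, and the factor $00$ never occurs. (Your stronger claim of arbitrarily long $0$-runs is also false --- Sturmian words have bounded run-lengths for both letters --- and your density argument ``$\omega_n = 1$ has density $\alpha < 1$, so for some $n$ one has $\omega_n = \omega_{n+1} = 0$'' is invalid.) Consequently $V_\lambda$ vanishes only on intervals of length one, the best uniform trial-function bound available by this method is $\pi^2$ rather than $\pi^2/4$, and one in fact expects $E_0(\lambda) \uparrow \pi^2$ as $\lambda \to \infty$. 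So the stated bound $C < 3$ is not obtainable this way; the paper's proof has exactly the same gap.
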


\begin{proof}
Since $V_\lambda(x) \ge 0$ for all $x \in \R$ and all $\lambda \ge 0$, the inequality $E_0(\lambda) \ge 0$ follows immediately. Notice that there is an interval $I \subset \R$ of length two such that $V_\lambda$ vanishes on $I$ for all $\lambda \ge 0$ (obviously, there are infinitely many such intervals). Let $\varphi$ be a smooth function with compact support contained in $I$ such that $\| \varphi \|_2 = 1$. Then
$$
\langle \varphi, H_\lambda \varphi \rangle
=
\int \! \overline{\varphi} (-\varphi'' + V_\lambda\varphi)
=
\int \! |\varphi'|^2
<
\infty.
$$
The second equality follows from $V|_{\mathrm{supp}(\varphi)} \equiv 0$ and integration by parts. Since $H_\lambda$ is self-adjoint, we have
$$
\inf \Sigma_\lambda
=
\inf_{\|\psi\| = 1} \langle \psi, H\psi \rangle
\leq
\langle \varphi, H_\lambda \varphi \rangle,
$$
so we may take $C = \| \varphi'\|^2_2$. By choosing $\varphi$ to be a smooth function which is suitably close to a (the square root of) a tent function on an interval of length two, we see that we can make $\| \varphi'\|_2^2 < 3$.
\end{proof}

\begin{lem} \label{l:bigl:thin}
In the large coupling regime, we have
$$
\lim_{\lambda \to \infty}
\dim_{\Hd}^{\loc} (\Sigma_\lambda, E_0(\lambda))
=
0.
$$
\end{lem}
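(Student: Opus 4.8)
The plan is to combine the explicit formula for the Fricke--Vogt invariant along the continuum curve with the known relationship between local Hausdorff dimension of the spectrum and Hausdorff dimension of the nonwandering set $\Omega_V$. The two ingredients are: (i) Lemma~\ref{lem:ground}, which tells us that $E_0(\lambda)$ stays in a fixed compact set $[0,C]$ with $C<3$ as $\lambda\to\infty$, and (ii) the fact, used already in the proof of Lemma~\ref{lem:app-4}, that $\dim_{\Hd}^{\loc}(\Sigma_\lambda, E) = \tfrac12\,\hdim(\Omega_{I(E,\lambda)})$ together with $\hdim(\Omega_V)\to 0$ as $V\to\infty$ (from \cite{Damanik2008}) and the continuity of $V\mapsto\hdim(\Omega_V)$. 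So it suffices to show that $I(E_0(\lambda),\lambda)\to\infty$ as $\lambda\to\infty$.

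First I would establish the lower bound $I(E_0(\lambda),\lambda)\to\infty$ using \eqref{eq:cont:invariant}:
\begin{equation*}
I(E_0(\lambda),\lambda)
=
\frac{\lambda^2}{4E_0(\lambda)(E_0(\lambda)-\lambda)}\,\sin^2\!\sqrt{E_0(\lambda)}\;\sin^2\!\sqrt{E_0(\lambda)-\lambda}.
\end{equation*}
Note $E_0(\lambda)-\lambda<0$ for large $\lambda$, so $\sqrt{E_0(\lambda)-\lambda}$ is imaginary and $\sin^2\!\sqrt{E_0(\lambda)-\lambda}=-\sinh^2\!\sqrt{\lambda-E_0(\lambda)}$, which together with the sign of $E_0(\lambda)-\lambda$ makes the whole expression positive and of order
\begin{equation*}
\frac{\lambda^2}{4E_0(\lambda)(\lambda-E_0(\lambda))}\,\sinh^2\!\sqrt{\lambda - E_0(\lambda)}\,\sin^2\!\sqrt{E_0(\lambda)}.
\end{equation*}
Because $E_0(\lambda)\in[0,C]$ is bounded, $\lambda-E_0(\lambda)\to\infty$, so $\sinh^2\!\sqrt{\lambda-E_0(\lambda)}$ grows super-polynomially while the prefactor $\lambda^2/(4E_0(\lambda)(\lambda-E_0(\lambda)))$ is of order $\lambda$; the only danger is the factor $\sin^2\!\sqrt{E_0(\lambda)}$ vanishing. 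This is where Lemma~\ref{lem:ground} is essential: since $0\le E_0(\lambda)\le C<3<\pi^2$, we have $0\le\sqrt{E_0(\lambda)}<\pi$, so $\sin\sqrt{E_0(\lambda)}=0$ only if $E_0(\lambda)=0$. I would handle the case $E_0(\lambda)=0$ (or $E_0(\lambda)$ very small) separately by a short argument: either rule it out for large $\lambda$, or take a Taylor expansion of \eqref{eq:cont:invariant} near $E=0$ (recall the singularity at $E=0$ is removable) and check that $I(E,\lambda)$ still blows up — indeed near $E=0$ one has $\sin^2\sqrt E/E \to 1$, so $I(E,\lambda)\sim \tfrac{\lambda}{4}\,\sinh^2\sqrt{\lambda}\,\cdot\,(\text{bounded below})$ as well.

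Once $I(E_0(\lambda),\lambda)\to\infty$ is in hand, the conclusion follows immediately: $\dim_{\Hd}^{\loc}(\Sigma_\lambda, E_0(\lambda))=\tfrac12\hdim(\Omega_{I(E_0(\lambda),\lambda)})\to\tfrac12\cdot 0 = 0$ by \cite{Damanik2008} and continuity of $V\mapsto\hdim(\Omega_V)$. I expect the main obstacle to be the bookkeeping around the potential vanishing of $\sin^2\!\sqrt{E_0(\lambda)}$ and the removable singularity at $E=0$: one must confirm that the ground state energy cannot conspire with the oscillatory factor to keep $I$ bounded. Lemma~\ref{lem:ground} (with the sharp constant $C<3<\pi^2$) is precisely what makes this manageable, so the argument is really a uniform lower bound on $\sin^2\!\sqrt{E}$ for $E$ in a compact subinterval of $[0,\pi^2)$ avoiding a neighborhood of $0$, plus a separate small-$E$ analysis using the removable-singularity form of \eqref{eq:cont:invariant}. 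Everything else is a direct quotation of results already established in the excerpt.
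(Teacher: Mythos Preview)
Your proposal is correct and follows essentially the same route as the paper: use Lemma~\ref{lem:ground} to confine $E_0(\lambda)$ to $[0,C]$ with $C<3<\pi^2$, plug into \eqref{eq:cont:invariant} to force $I(E_0(\lambda),\lambda)\to\infty$ (the paper's remark that $C$ must be bounded away from $\pi^2$ is exactly your observation about $\sin^2\!\sqrt{E_0(\lambda)}$), and then convert this into the dimension statement. The only discrepancy is the citation for the last step: you invoke the discrete formula $\dim_{\Hd}^{\loc}(\Sigma_\lambda,E)=\tfrac12\hdim(\Omega_{I(E,\lambda)})$ from the Jacobi setting (Lemma~\ref{lem:app-4}, ultimately \cite{Yessen2011}), whereas the paper cites the continuum analogue \cite[Theorem~6.5]{DFG2014}; since transversality of $\ell_\lambda$ with the center-stable lamination is not established in the continuum case here, you should cite the continuum result rather than transplant the discrete one.
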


\begin{proof}
This follows from \cite[Theorem~6.5]{DFG2014}, \eqref{eq:cont:invariant}, and Lemma~\ref{lem:ground}. Notice that we need the $C$ from Lemma~\ref{lem:ground} to be bounded away from $\pi^2$ to effectively bound $I(E_0(\lambda),\lambda)$ from below as $\lambda \to \infty$.
\end{proof}

% Now Lemma \ref{l:bigl:thin} combined with Proposition \ref{prop:trans-somewhere} guarantee existence of the Cantor set part of $\Sigma_{\lambda_1, \lambda_2}^2$ near the bottom of the spectrum. Indeed, since at $\ell_{\lambda_i}(0)$, $i=1,2$, the intersection is transversal, we have the following. There exists $\epsilon > 0$, such that $\hdim([0, \epsilon)\cap\Sigma_{\lambda_1}) = \bdim([0, \epsilon)\cap\Sigma_{\lambda_1})$ (Proposition 3.2 of \cite{Yessen2011a}), and we can employ \eqref{eq:haus-box}. More precisely, from Lemma \ref{l:bigl:thin} we get: for all $\lambda_1, \lambda_2 > 0$ sufficiently large, there exists $\epsilon > 0$ such that $\hdim([0, \epsilon)\cap \Sigma_{(\lambda_1, \lambda_2)}^2) < 1$.

\begin{rem}\label{rem:accum}
As mentioned in the introduction, it follows directly from \cite{DFG2014} that for all $\lambda \geq 0$, $\hdim(\Sigma_\lambda) = 1$. It also follows from \cite{DFG2014} that $\hdim$ accumulates at infinity (that is, for all $E > 0$, $\hdim([E, \infty)\cap \Sigma_\lambda)=1$). On the other hand, there may exist other points where the dimension accumulates. Indeed, $E\in\Sigma_\lambda$ is such a point if and only if $\ell_\lambda(E)\in S_0$ (see \cite[Section~2]{Damanik2013a}). Now, if $\lambda$ is of the form $4\pi^2(a^2 - b^2)$ for $a, b \in \N$ with $a > b$, then with $E = 4a^2\pi^2$, it is evident from \eqref{eq:curve-continuum} that $\ell_\lambda(E) = (1, 1, 1)$, which is a point on $S_0$ that is fixed under the action by $f$. Thus we have $\lhdim(\Sigma_\lambda, E) = 1$.
\end{rem}

\begin{lem} \label{lem:biglam:thin}
For all $\lambda_1, \lambda_2 > 0$ sufficiently large, there exists an interval $J$ such that $J \cap \Sigma_{(\lambda_1,\lambda_2)}^2$ is a {\rm(}nonempty{\rm)} Cantor set.
\end{lem}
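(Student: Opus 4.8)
The plan is to mimic the structure of the discrete argument (Lemmas~\ref{lem:app-3} and \ref{lem:app-4}), using the explicit curve of initial conditions \eqref{eq:curve-continuum} and the Fricke--Vogt invariant \eqref{eq:cont:invariant} to locate a point of the spectrum lying on $S_0$, and then exploit the fact that the thickness of the Cantor sets obtained by slicing the center-stable lamination blows up as one approaches $S_0$. First I would observe, as already noted in Remark~\ref{rem:accum}, that choosing $\lambda$ of the special form $4\pi^2(a^2-b^2)$ with $a>b$ positive integers forces $\ell_\lambda(4a^2\pi^2) = (1,1,1) \in S_0$; since such $\lambda$ can be taken arbitrarily large, for any $\lambda_i$ large we can find $\lambda_i'$ close to $\lambda_i$ (or work along an analytic path, exactly as in the proof of Theorem~\ref{thm:thm1}) for which $\ell_{\lambda_i'}$ meets $S_0$ at a point on the branch $W$ of the strong-stable manifold of $P_1=(1,1,1)$. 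The transversality input here is Theorem~\ref{thm:transversality}, which now holds in this continuum setting via \cite{DFG2014}, together with the fact that $\ell_\lambda$ is analytic in both $E$ and $\lambda$.

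Next I would reproduce the thickness estimate of Lemma~\ref{lem:app-3} in this setting: perturbing $\lambda$ slightly off the special value moves the relevant intersection point from $S_0$ onto a nearby surface $S_{V}$ with $V$ small and positive, and by \eqref{eq:cont:invariant} the minimum of $I(\ell_\lambda(E))$ over a small energy window is comparable to this $V$ and tends to $0$ as the perturbation goes to $0$. Choosing a plane $\Lambda$ transversal to the surfaces $\{S_V\}_{V\geq 0}$ and to the center-stable lamination near this intersection, rectifying the foliation by the $S_V$'s, and using \cite[Lemma~3.3]{Yessen2011a}-type bounded-distortion estimates for the angles, I would conclude exactly as before that for a suitably small energy interval $I_0$ around the perturbed intersection point, $I_0 \cap \Sigma_{\lambda_i'}$ is a Cantor set whose thickness exceeds any prescribed $\tau_0$; the key external fact is that the thickness of the slice Cantor sets tends to infinity at $S_0$ (cf.\ \cite{Damanik2010}, and this transfers to the continuum model through \cite{DFG2014}). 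In parallel, Lemma~\ref{l:bigl:thin} already gives that near the ground state the spectrum has local Hausdorff dimension tending to $0$ as $\lambda \to \infty$; combined with the transversality-based identity $\hdim = \overline{\bdim}$ for such spectral slices (\cite[Proposition~3.2]{Yessen2011a}), a bottom piece $[E_0(\lambda_i), E_0(\lambda_i)+\epsilon)\cap\Sigma_{\lambda_i}$ has upper box dimension less than any prescribed amount for $\lambda_i$ large.

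Finally I would assemble the pieces exactly as in the proof of Theorem~\ref{thm:thm2}: for $\lambda_1,\lambda_2$ large, pick the bottom slice of each $\Sigma_{\lambda_i}$ with $\overline{\bdim} < 1/2$; by \eqref{eq:dim-sum} their sumset sits at the bottom of $\Sigma_{(\lambda_1,\lambda_2)}^2$ and has Hausdorff dimension $<1$, hence zero Lebesgue measure, and after shrinking to remove the single possible isolated boundary point it is a nonempty Cantor set $J\cap\Sigma_{(\lambda_1,\lambda_2)}^2$. (The one subtlety distinguishing the continuum case from the discrete one is that $\Sigma_\lambda$ is noncompact and unbounded above, but this is irrelevant since we work in a neighborhood of the ground state, which is bounded by Lemma~\ref{lem:ground}; and the sumset of two closed bottom-slices is closed.) I expect the main obstacle to be the careful bookkeeping needed to transfer the thickness-blowup-at-$S_0$ statement and the $\hdim=\overline{\bdim}$ identity — both proved in \cite{Yessen2011a,Damanik2010} for the discrete Jacobi model — to the continuum model of \cite{DFG2014}; once those are in hand, the geometric argument of Lemma~\ref{lem:app-3} and the dimension bookkeeping of the proof of Theorem~\ref{thm:thm2} go through essentially verbatim. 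Indeed, for the conclusion of Theorem~\ref{thm:thm3} only the bottom-of-the-spectrum thinness is needed, so in fact one can bypass the thickness blow-up entirely here and simply invoke Lemma~\ref{l:bigl:thin}; the thickness statement is what one would need for the (conjectured, cf.\ Remark~\ref{rem:thm3}) stronger conclusion that a full ray lies in the spectrum.
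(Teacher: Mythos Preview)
Most of your proposal is devoted to the thickness argument (locating points on $S_0$, perturbing off $S_0$, bounded-distortion estimates for $\tau^{\loc}$), but that material is irrelevant to Lemma~\ref{lem:biglam:thin}: this lemma asserts only the existence of a Cantor piece at the bottom of the sum spectrum, and you yourself note in your final sentence that the thickness blow-up can be bypassed entirely here. The thickness argument belongs to the companion statement (Lemma~\ref{l:bigE:thick}) about nonempty interior, not to this one.

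The part of your proposal that actually addresses the lemma---your third paragraph---takes essentially the same route as the paper: use Lemma~\ref{l:bigl:thin} to make the local Hausdorff dimension at the ground state small, invoke $\hdim=\overline{\bdim}$ for the bottom slice via \cite[Proposition~3.2]{Yessen2011a}, apply the sumset bound \eqref{eq:dim-sum}, and conclude that a bottom interval meets $\Sigma_{(\lambda_1,\lambda_2)}^2$ in a zero-measure perfect set. One containment you should make explicit: if $E=E_1+E_2\in\Sigma_{(\lambda_1,\lambda_2)}^2$ lies within $\epsilon$ of $E_0(\lambda_1)+E_0(\lambda_2)$, then each $E_i$ lies within $\epsilon$ of $E_0(\lambda_i)$, so the bottom of the sum spectrum is contained in the sum of the bottom slices.

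There is, however, a genuine gap in your justification of the $\hdim=\overline{\bdim}$ step. You write that ``the transversality input here is Theorem~\ref{thm:transversality}, which now holds in this continuum setting via \cite{DFG2014}.'' That is not established anywhere: Theorem~\ref{thm:transversality} is proved only for the discrete Jacobi line $\ell_{(\p,\q)}$, and the paper explicitly declines to prove absence of tangencies for the continuum curve (see the remark following the proof of Lemma~\ref{lem:biglam:thin}). The paper's workaround is different and more robust: it allows for a possible tangency at $\ell_{\lambda_i}(E_0(\lambda_i))$ itself, uses the fact (from \cite[Section~3.2.1]{Yessen2011a}) that tangencies cannot accumulate to obtain transversality on a punctured interval $(E_0(\lambda_i),E_0(\lambda_i)+\epsilon_i)$, and only then applies \cite[Proposition~3.2]{Yessen2011a} on that punctured interval. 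You should replace your appeal to global transversality with this local argument.
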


\begin{proof}

Take $\lambda_1, \lambda_2$ so that $\lhdim(\Sigma_{\lambda_i}, E_0(\lambda_i)) < \frac{1}{4}$. Observe that $E_0 \eqdef E_0(\lambda_1)+E_0(\lambda_2)$ marks the bottom of $\Sigma_{(\lambda_1, \lambda_2)}^2$. It follows that for all $\epsilon > 0$ sufficiently small, there exist $\delta_i > 0$ and sets
\begin{align*}
J_i^{(k)}(\delta_i)
\subset
\Sigma_{\lambda_i}\cap (E_0(\lambda_i), E_0(\lambda_i)+\delta_i),
\quad
i = 1, 2, \, k \in \N,
\end{align*}
such that
\begin{align}\label{eq:union}
\Sigma_{(\lambda_1, \lambda_2)}^2\cap(E_0, E_0+\epsilon) \subseteq \bigcup_{k\in\N} \left[J_1^{(k)}(\delta_1)+J_2^{(k)}(\delta_2)\right].
\end{align}

It may happen that $\ell_{\lambda_i}$ intersects a center-stable manifold tangentially at the point $\ell_{\lambda_i}(E_0(\lambda_i))$; however, since tangencies cannot accumulate (see \cite[Section 3.2.1]{Yessen2011a}), there exist $\epsilon_1, \epsilon_2 > 0$ such that for all $E_i \in \Sigma_{\lambda_i}\cap (E_0(\lambda_i), E_0(\lambda_i) + \epsilon_i)$, the intersection of $\ell_{\lambda_i}$ with a center-stable manifold at the point $\ell_{\lambda_i}(E_i)$ is transversal. It follows from \cite[Proposition 3.2]{Yessen2011a} that for all $i$ and $k$ such that $J_i^{(k)} \subset \Sigma_{\lambda_i}\cap (E_0(\lambda_i), E_0(\lambda_i) + \epsilon_i)$, $\hdim(J_i^{(k)}) = \bdim(J_i^{(k)})$.

Now choose $\epsilon_i > 0$ in the previous paragraph so small, that in addition to the transversality condition being satisfied, we have
\begin{align*}
\hdim(\Sigma_{\lambda_i}\cap (E_0(\lambda_i), E_0(\lambda_i) + \epsilon_i)) < \frac{1}{4}.
\end{align*}
Take $\epsilon > 0$ above sufficiently small, ensuring that we can take $\delta_i \in (0, \epsilon_i)$. Then just as in \eqref{eq:haus-box} we have
\begin{align*}
\hdim(J_1^{(k)}(\delta_1) + J_2^{(k)}(\delta_2)) \leq \hdim(J_1^{(k)}(\delta_1)) + \hdim(J_1^{(k)}(\delta_2)) < \frac{1}{2}\text{ for all } k.
\end{align*}
But then from \eqref{eq:union} we have $\hdim(\Sigma_{(\lambda_1, \lambda_2)}^2\cap (E_0, E_0 + \epsilon)) \leq \frac{1}{2} < 1$, ensuring that $\Sigma_{(\lambda_1, \lambda_2)}^2$ is a Cantor set near $E_0$.
\end{proof}

\begin{rem}
We do not suspect that tangencies between $\ell_\lambda$ and the center-stable manifolds occur; however, a proof of absence of tangencies would introduce unnecessary technical difficulties. For this reason, we have decided to omit it.
\end{rem}

To prove that the interior of $\Sigma_\lambda^2$ is nonempty for all $\lambda > 0$, we first prove the following lemma (in what follows, $\mathbb{S}$, $\mathcal{A}$, and $F$ are as in \eqref{eq:cc-center}, \eqref{eq:anosov}, and \eqref{eq:sconj}).

\begin{figure}[t!]
\subfigure[$\lambda = 50$, $E\in(51, 6\cdot 10^3)$.]{
\includegraphics[scale=0.45]{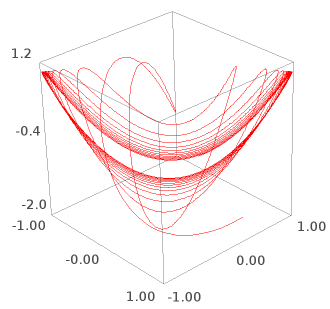}
}
\quad
\subfigure[$\lambda = 50$, $E\in(6\cdot 10^4, 8\cdot 10^4)$.]{
\includegraphics[scale=0.45]{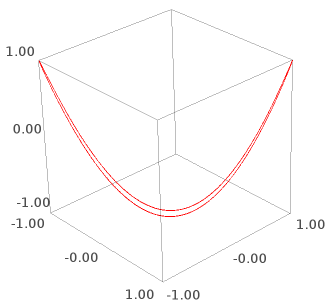}
}
\caption{}\label{fig:curve}
\end{figure}

\begin{lem} \label{l:bigE:thick}
For all $\lambda > 0$ and $\tau_0 > 0$ there exists $M > 0$ and infinitely many $E \in (M, \infty)\cap \Sigma_\lambda$, such that $\tau^{\loc}(\Sigma_\lambda, E) > \tau_0.$
\end{lem}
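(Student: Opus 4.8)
The plan is to exploit the fact that, by \eqref{eq:cont:invariant}, $I(E,\lambda)\to 0$ as $E\to\infty$ for every fixed $\lambda>0$, so that in the high-energy regime the curve $\ell_\lambda$ approaches the Cayley cubic $S_0$. Combining this with the fact that the thickness of the Cantor sets cut out on $S_V$ by the center-stable lamination blows up as $V\downarrow 0$ (the input from \cite{Damanik2010} already used in Lemma~\ref{lem:app-3}), a transversal intersection of $\ell_\lambda$ with the lamination at such a point should inherit large local thickness. So I would reduce the lemma to the following: for each $\tau_0>0$, produce infinitely many energies $E_n\to\infty$ lying in $\Sigma_\lambda$, at which $V_n\eqdef I(\ell_\lambda(E_n))$ is small and positive and $\ell_\lambda$ meets the center-stable lamination $\widetilde{\Omega}$ transversally; the rectification argument of Lemma~\ref{lem:app-3} then carries over, mutatis mutandis (with the analytic curve $\ell_\lambda$ in place of a line), to give $\tau^{\loc}(\Sigma_\lambda,E_n)$ of order $\tau(S_{V_n}\cap\widetilde{\Omega})$, which exceeds $\tau_0$ for $n$ large.

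To locate these energies I would first identify the limiting shape of $\ell_\lambda$. Writing $s=\sqrt E$ and using $\sqrt E-\sqrt{E-\lambda}\to 0$ together with $\tfrac12(\sqrt{E/(E-\lambda)}+\sqrt{(E-\lambda)/E})\to 1$, one reads off from \eqref{eq:curve-continuum} that $\ell_\lambda(E)=(\cos s,\cos s,\cos 2s)+o(1)$; reparametrizing by $s$ and translating $s\mapsto s+2\pi n$, $\ell_\lambda$ converges in $C^1$ on compact $s$-intervals to $\gamma(s)=(\cos s,\cos s,\cos 2s)$. This $\gamma$ lies on $S_0$, in fact inside $\mathbb{S}$, and a direct check shows $\gamma=F$ of the line through the origin in $\mathbb{T}^2$ with direction $(1,-2)$, which is transversal to both the stable and unstable foliations of $\mathcal{A}$. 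I would then fix $s_0\notin\tfrac{\pi}{2}\Z$ so that $p_0\eqdef\gamma(s_0)$ is a nonsingular point of $\mathbb{S}$ in the interior of the Markov rectangle $\mathcal{K}_0$, fix a small box $U$ around $p_0$ avoiding the singularities of $S_0$ and sitting inside one (continued) Markov rectangle, and consider, for large $n$, the window $W_n=\{E:\sqrt E\in[s_0+2\pi n-\eta,\,s_0+2\pi n+\eta]\}$. On $W_n$ the curve $\ell_\lambda$ is $C^1$-close to the arc $\gamma([s_0-\eta,s_0+\eta])\subset U$; since $\sin s_0\neq 0$, \eqref{eq:cont:invariant} gives $I(\cdot,\lambda)>0$ on $W_n$ with values in an interval shrinking to $0$ as $n\to\infty$; and since $\cos s_0\neq 0$, a short computation gives $\tfrac{d}{dE}I(E,\lambda)\neq 0$ on $W_n$, so $\ell_\lambda$ crosses the surfaces $S_V$ transversally there.

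Next I would check that $\Sigma_\lambda$ actually meets each $W_n$ and that the intersection with $\widetilde{\Omega}$ is transversal. Inside $U$, the leaves of $\widetilde{\Omega}$ meeting $S_0$ foliate $\mathbb{S}\cap U$ smoothly, while for small $V>0$ the slice $\widetilde{\Omega}\cap S_V$ is a Cantor lamination whose transversal cross-sections have thickness $\to\infty$, hence gaps $\to 0$, as $V\downarrow 0$. Because $\gamma$ (hence $\ell_\lambda$ restricted to $W_n$, for $n$ large) has nonzero unstable component at $p_0$, the curve $\ell_\lambda|_{W_n}$ sweeps out a transversal segment of definite unstable width in $U$; once the gaps of $\widetilde{\Omega}\cap S_V$ in the relevant range of $V$ are smaller than that width, $\ell_\lambda|_{W_n}$ must meet $\widetilde{\Omega}$, and I pick $E_n\in W_n$ with $\ell_\lambda(E_n)\in\widetilde{\Omega}$, so $E_n\in\Sigma_\lambda$ and $V_n=I(\ell_\lambda(E_n))\in(0,\infty)$ with $V_n\to 0$. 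The intersection is transversal since $\ell_\lambda'(E_n)$, projected along the center direction (which is transverse to $S_{V_n}$), is close to $\gamma'(s_0)$, which is transversal to the stable direction, so $\ell_\lambda'(E_n)$ has nonzero component along the unstable direction and therefore cannot lie in $T\widetilde{\Omega}=E^s\oplus E^c$. Then the argument of Lemma~\ref{lem:app-3}—pass to a plane $\Lambda\supset\ell_\lambda$ transversal to $\{S_V\}$ and to $\widetilde{\Omega}$, rectify the $S_V$-foliation, and compare each gap of $\Sigma_\lambda\cap W_n$ with its adjacent bands by projecting to a leaf $S_V$, $V$ near $V_n$—yields $\tau^{\loc}(\Sigma_\lambda,E_n)\geq\tfrac12\tau(S_{V_n}\cap\widetilde{\Omega})-o(1)$, the sine factors being controlled by shrinking $W_n$ (with $n$ fixed). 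Since $\tau(S_{V_n}\cap\widetilde{\Omega})\to\infty$, we get $\tau^{\loc}(\Sigma_\lambda,E_n)>\tau_0$ for $n\geq n_0(\tau_0)$, and taking $M=E_{n_0}$ and using $E_n\to\infty$ would finish the proof.

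The \emph{main obstacle} is that, as $V_n\downarrow 0$, both $\ell_\lambda$ and the leaves of $\widetilde{\Omega}$ become nearly tangent to $S_0$, so the angles entering the thickness comparison degenerate. This is precisely the phenomenon already handled in Lemma~\ref{lem:app-3}: only ratios of sines of angles at nearby points occur (and these tend to $1$ as the local window shrinks), while the—rather strong—blow-up of the $S_V$-thickness, corresponding to $\hdim(\Omega_V)\to 1$, dominates the resulting distortion. A subsidiary technical point, the nonemptiness of $\Sigma_\lambda\cap W_n$, is dispatched by the same blow-up, and one should also record that the chosen windows stay away from the finitely many singularities of $S_0$, which holds since $p_0$ was picked nonsingular.
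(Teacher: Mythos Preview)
Your approach is correct and follows the same overall path as the paper: identify the high-energy limit curve $\gamma=\ell_0$ on $S_0$, verify that it is transversal to the stable foliation there (you compute the lift direction on $\T^2$ as $(1,-2)$, which is indeed not an eigendirection of $\mathcal A$), deduce that $\ell_\lambda$ meets the center-stable lamination transversally in windows $W_n$, and then port over the rectification-and-thickness comparison of Lemma~\ref{lem:app-3}.

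The one place where your write-up diverges from the paper is your ``main obstacle''. You assert that as $V_n\downarrow 0$ the leaves of $\widetilde{\Omega}$ become nearly tangent to $S_0$; this is true near the singular corner points (which is the setting of Lemma~\ref{lem:app-3}, where one works near $P_1=(1,1,1)$), but it is \emph{false} near a nonsingular point $p_0\in\mathbb S$ of the kind you chose. The paper exploits exactly this: it anchors at the periodic point $(0,0,-1)$ and records that the center-stable manifold through it is transversal to $\mathbb S$, so the angle between $\ell_\lambda$ and the center-stable leaves stays uniformly bounded below by some fixed $\delta>0$ for all large $E$, with no degeneration to worry about. Your sine-ratio remedy from Lemma~\ref{lem:app-3} still goes through (the ratios that actually appear involve only the angles of $\ell_\lambda$ and of $L_V$ against the center-stable leaves, both of which are bounded in your setting), so the misconception does not break the argument; it just makes the exposition more elaborate than necessary. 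Your extra care about nonemptiness of $\Sigma_\lambda\cap W_n$ and about $\tfrac{d}{dE}I(E,\lambda)\neq 0$ on $W_n$ is fine and does no harm.
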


\begin{proof}
Observe that $\ell_0$ passes through the point $(0, 0, -1)$, which is periodic. Its continuation to the surfaces $S_V$, $V\neq 0$, is of the form $(0, 0, -\sqrt{V+1})$. The center-stable manifold containing the point $(0, 0, -1)$ is transversal to $\mathbb{S}$; denote this manifold by $W$.

It is easy to see that $\ell_0$ lies in $\mathbb{S}$, and that $F^{-1}(\ell_0)$ is horizontal in $\T^2$. On the other hand, since the eigendirections of $\mathcal{A}$ do not align with $(1, 0)$, $F^{-1}(\ell_0)$ is transversal to the stable foliation of $\mathcal{A}$ on $\T^2$. Now by \cite[Lemma 3.1]{Damanik2009} we conclude that $\ell_0$ is transversal to the stable foliation of $f$ on $\mathbb{S}$. It follows that $\ell_0$ is transversal to $W$ at the point $(0, 0, -1)$.

Let $\dist(A, B)$ denote the distance between the sets $A$ and $B$. It is easy to see from the expression for $\ell_\lambda$ that away from a neighborhood of the points $(1, 1, 1)$ and $(-1, -1, 1)$, $\ell_\lambda((E_\mathrm{end},\infty))$ approaches $\ell_0(\R)$ in the $C^1$ topology as $E_\mathrm{end}\rightarrow \infty$ (see Figure \ref{fig:curve}). It follows that there exists $\delta \in(0, \frac{\pi}{2})$ such that for all $\lambda > 0$ there exists $\epsilon_0 > 0$, such that for every $\epsilon \in (0, \epsilon_0)$, every segment $L$ of $\ell_\lambda(\R)$ that satisfies $\dist(L, \set{(0, 0, -1)}) < \epsilon$ intersects the center-stable manifolds uniformly transversally at an angle of size at least $\delta$. Now the method of (the proof of) Theorem \ref{thm:thm2} applies and yields $\tau^\loc(\Sigma_\lambda, E) > \tau_0$ for infinitely many sufficiently large $E$.
\end{proof}

Now with $\tau_0$ in Lemma \ref{l:bigE:thick} chosen in $(1, \infty)$, it follows, just as in the proof of Theorem \ref{thm:thm2}, that for all $\lambda_i > 0$, $i=1,2$, $\Sigma_{(\lambda_1, \lambda_2)}^2$ contains an interval. Combining this with Lemma~\ref{lem:biglam:thin}, we obtain Theorem \ref{thm:thm3}.

\begin{rem}\label{rem:thm3-details}
As mentioned in Remark \ref{rem:thm3} (2), we suspect that for all $\lambda_i > 0$, $i=1,2$, there exists $E_i\in \Sigma_{\lambda_i}$ such that $[E_1 + E_2, \infty)\subset\Sigma_{(\lambda_1, \lambda_2)}^2$, which is obviously stronger than $\Sigma_{(\lambda_1, \lambda_2)}^2$ just having nonempty interior. A proof of this would involve control of the global thicknes of $\Sigma_\lambda\cap [E, \infty)$ as $E \rightarrow \infty$. The global thickness could be controlled via finer estimates than those used for the control of the local thickness in the proofs above, but the arguments would be far more technical. For this reason we have decided to delegate this task to later investigations.
\end{rem}

\section*{Acknowledgements}

The authors thank David Damanik and Anton Gorodetski for helpful discussions. J.\ F.\ thanks the Isaac Newton Institute for Mathematical Sciences for their hospitality during the Programme on Periodic and Ergodic spectral problems, during which portions of this work were undertaken.

\bibliographystyle{amsplain}
\bibliography{bibliography}

\end{document}